\newtheorem{thm}{Theorem}[section]
\newtheorem{lem}[thm]{Lemma}
\newtheorem{cor}[thm]{Corollary}
\newtheorem{pro}[thm]{Proposition}
\theoremstyle{definition}
\newtheorem{defi}[thm]{Definition}
\newtheorem{rmk}[thm]{Remark}
\newcommand {\emptycomment}[1]{} 
\newcommand{\nc}{\newcommand}
\newcommand{\delete}[1]{}
\nc{\tred}[1]{\textcolor{red}{#1}}
\nc{\tblue}[1]{\textcolor{blue}{#1}}
\nc{\tgreen}[1]{\textcolor{green}{#1}}
\nc{\tpurple}[1]{\textcolor{purple}{#1}}
\nc{\tgray}[1]{\textcolor{gray}{#1}}
\nc{\torg}[1]{\textcolor{orange}{#1}}
\nc{\tmag}[1]{\textcolor{magenta}}
\nc{\btred}[1]{\textcolor{red}{\bf #1}}
\nc{\btblue}[1]{\textcolor{blue}{\bf #1}}
\nc{\btgreen}[1]{\textcolor{green}{\bf #1}}
\nc{\btpurple}[1]{\textcolor{purple}{\bf #1}}
\nc{\tforall}{\ \ \text{for all }} \nc{\hatot}{\,\widehat{\otimes}
\,} \nc{\complete}{completed\xspace} \nc{\wdhat}[1]{\widehat{#1}}
\nc{\ts}{\mathfrak{p}} \nc{\mts}{c_{(i)}\ot d_{(j)}}
\nc{\NA}{{\bf NA}} \nc{\LA}{{\bf Lie}} \nc{\CLA}{{\bf CLA}}
\nc{\cybe}{CYBE\xspace} \nc{\nybe}{NYBE\xspace}
\nc{\ccybe}{CCYBE\xspace}
\nc{\ndend}{pre-Novikov\xspace} \nc{\calb}{\mathcal{B}}
\nc{\rk}{\mathrm{r}}
\nc{\vspa}{\vspace{-.1cm}} \nc{\vspb}{\vspace{-.2cm}}
\nc{\vspc}{\vspace{-.3cm}} \nc{\vspd}{\vspace{-.4cm}}
\nc{\vspe}{\vspace{-.5cm}}
\nc{\disp}[1]{\displaystyle{#1}}
\nc{\bin}[2]{ (_{\stackrel{\scs{#1}}{\scs{#2}}})}  
\nc{\binc}[2]{ \left (\!\! \begin{array}{c} \scs{#1}\\
    \scs{#2} \end{array}\!\! \right )}  
\nc{\bincc}[2]{  \left ( {\scs{#1} \atop
    \vspace{-.5cm}\scs{#2}} \right )}  
\nc{\ot}{\otimes} \nc{\sot}{{\scriptstyle{\ot}}}
\nc{\otm}{\overline{\ot}}
\nc{\ola}[1]{\stackrel{#1}{\la}}
\nc{\scs}[1]{\scriptstyle{#1}} \nc{\mrm}[1]{{\rm #1}}
\nc{\dirlim}{\displaystyle{\lim_{\longrightarrow}}\,}
\nc{\invlim}{\displaystyle{\lim_{\longleftarrow}}\,}
\nc{\bfk}{{\bf k}} \nc{\bfone}{{\bf 1}} \nc{\rpr}{\circ}
\nc{\fraka}{{\mathfrak a}} \nc{\frakB}{{\mathfrak B}}
\nc{\frakb}{{\mathfrak b}} \nc{\frakd}{{\mathfrak d}}
\nc{\oD}{\overline{D}} \nc{\frakF}{{\mathfrak F}}
\nc{\frakg}{{\mathfrak g}} \nc{\frakm}{{\mathfrak m}}
\nc{\frakM}{{\mathfrak M}} \nc{\frakMo}{{\mathfrak M}^0}
\nc{\frakp}{{\mathfrak p}} \nc{\frakS}{{\mathfrak S}}
\nc{\frakSo}{{\mathfrak S}^0} \nc{\fraks}{{\mathfrak s}}
\nc{\os}{\overline{\fraks}} \nc{\frakT}{{\mathfrak T}}
\nc{\oT}{\overline{T}}
\nc{\frakX}{{\mathfrak X}} \nc{\frakXo}{{\mathfrak X}^0}
\nc{\frakx}{{\mathbf x}}
\nc{\frakTx}{\frakT}      
\nc{\frakTa}{\frakT^a}        
\nc{\frakTxo}{\frakTx^0}   
\nc{\caltao}{\calt^{a,0}}   
\nc{\ox}{\overline{\frakx}} \nc{\fraky}{{\mathfrak y}}
\nc{\frakz}{{\mathfrak z}} \nc{\oX}{\overline{X}}
\newcommand{\clA}{\mathcal{A}}
\newcommand{\clB}{\mathcal{B}}
\newcommand{\clE}{\mathcal{\hat{A}}}
\title[Non-abelian extensions of relative Rota-Baxter Lie algebras and Wells type exact sequences ]
{ Non-abelian extensions of relative Rota-Baxter Lie algebras and Wells type exact sequences }
\author{ Qinxiu Sun}
\address{Department of Mathematics, Zhejiang University of Science and Technology, Hangzhou, 310023} \email{qxsun@126.com}
\author{QianWen Zhu}
\address{Department of Mathematics, Zhejiang University of Science and Technology, Hangzhou, 310023}
         \email{2630583032@qq.com}
\subjclass[2010]{17B40, 17B56, 17A36, 17B10 }
\keywords{relative Rota-Baxter Lie algebras, non-abelian extension, automorphism, inducibility, Wells exact sequence, derivation}
\begin{document}
\begin{abstract}

In this paper, we explore
non-abelian extensions of relative Rota-Baxter Lie algebras and 
classify the non-abelian extensions by introducing the non-abelian second cohomology group.
We also study the inducibility of a pair of automorphisms about a non-abelian extension of relative Rota-Baxter Lie algebras
and derive the Wells type exact sequences. Finally,
we investigate the inducibility problem of pairs
of derivations about an abelian extension of relative Rota-Baxter Lie algebras and give an exact
sequence of Wells type.
\end{abstract}

\maketitle

\vspace{-1.2cm}

\tableofcontents

\vspace{-1.2cm}

\allowdisplaybreaks

\section{Introduction}

Rota-Baxter operators were originally appeared in Baxter's study of the fluctuation theory in probability \cite{118} and further
was developed by Rota \cite{117}. They play an important role in the Connes-Kreimer’s algebraic approach \cite{111} to 
the renormalization in perturbative quantum field theory. Besides, Rota-Baxter operators and 
noncommutative symmetric functions and Hopf algebras have close relationship \cite{115,116}.
Recently it has found there are close connections between Rota-Baxter operators and 
double Poisson algebras \cite{113}. In the context of Lie algebra,
a Rota-Baxter operator was introduced independently in the 1980s as the operator form
of the classical Yang-Baxter equation that plays important roles in both mathematics
and mathematical physics such as integrable systems and quantum groups \cite{110,119}. B. A.
Kupershmidt developed a more general notion, an $\mathcal{O}$-operator on a Lie algebra (later
also called a generalized Rota-Baxter operator or a relative Rota-Baxter operator) in his
study of the classical Yang-Baxter equation and related integrable systems \cite{114}. 
$\mathcal{O}$-operators were known to be helpful in providing solutions of the CYBE 
 in the semidirect product Lie algebra and give rise to pre-Lie algebras
larger Lie algebra \cite{120}. Inspired by Poisson structures, Uchino
\cite{2} introduced the notion of generalized Rota-Baxter operators
on associative algebras. A relative Rota-Baxter Lie (resp. associative)
algebra is a triple consisting of a Lie (resp. associative) algebra, a representation and a relative Rota-Baxter
operator. In \cite{121,124}, the authors construct a suitable algebra which characterize relative Rota-Baxter Lie
(resp. associative) algebras as its Maurer-Cartan elements. Subsequently, representations and cohomology
of relative Rota-Baxter Lie (resp. associative, Leibniz) algebras are explicitly studied in \cite{122,123,35}.

Extensions are useful mathematical objects to understand the
underlying structures. The non-abelian extension is a relatively
general one among various extensions (e.g. central extensions,
abelian extensions, non-abelian extensions etc.). Non-abelian
extensions were first explored by Eilenberg and Maclane \cite{018},
which yielded to the low dimensional non-abelian cohomology group. This theory was subsequently extended to 
various kinds of algebras, such as Lie (super)algebras, Lie 2-algebras, Lie Yagamuti algebras, 
(relative) Rota-Baxter groups, Rota-Baxter Lie algebras and 
Rota-Baxter Leibniz algebras, see \cite{ 08, 014, 048, 021, 023, 027, 028,133} and references therein .
The cohomology and abelian extensions of relative Rota-Baxter Lie algebras were considered
in \cite{35}. But the non-abelian extension
of relative Rota-Baxter Lie algebras is still absent. This is the first motivation for writing this paper.

Another interesting study related to extensions of algebraic structures is given by the inducibility (extensibility) 
of pairs of automorphisms and derivations. When a pair of automorphisms is inducible? This problem was first
 considered by Wells \cite{037} for abstract groups and further studied in \cite{045,032}.
Since then, several authors have studied this subject further, see \cite{021,023,026,028} and references therein.
While, the extensibility problem of a pair of derivations on abelian extensions was investigated in \cite{015,130,134,135}. 
As byproducts, the Wells short exact sequences were obtained for
various kinds of algebras \cite{014, 015,021, 023,026,045,028,130}, which connected the relative automorphism groups and
the non-abelian second cohomology groups. Inspired by these results, we study inducibility of a pair of automorphisms on a non-abelian
extension of relative Rota-Baxter Lie algebras and derive the analogue of the Wells short exact sequences. 
This is another motivation for writing the present paper. 
Moreover, we explore the inducibility of pairs of derivations and develop 
the Wells exact sequences for an abelian extensions of relative Rota-Baxter Lie algebras.

The paper is organized as follows. In Section 2, we recall some basic information 
about the relative Rota-Baxter Lie algebra and its cohomology theory.
 In Section 3, we investigate non-abelian extensions and classify the non-abelian
extensions using the non-abelian second cohomology groups. In Section 4, we study
the inducibility problem of a pair of automorphisms about a non-abelian extension of relative Rota-Baxter Lie algebras.
In Section 5, we develop Wells type exact sequences in the context of non-abelian
extensions of relative Rota-Baxter Lie algebras. Finally,
we discuss the inducibility problem of pairs of derivations about an abelian extensions of relative Rota-Baxter Lie algebras.

Throughout the paper, let $k$ be a field. Unless otherwise
specified, all vector spaces and algebras are over $k$.

\setlength{\baselineskip}{1.25\baselineskip}


\section{Preliminaries on relative Rota-Baxter Lie algebras}

We begin with recalling some basic notions of relative Rota-Baxter Lie algebras following \cite{124,35}.

A Rota-Baxter Lie algebra is a Lie algebra $(A, [ \ , \
]_{A})$ with a linear map $T:
A\longrightarrow A$, satisfying
$$[T(x),T(y)]_{A}=T([T(x),y]_{A}+[x,T(y)]_{A}),~~\forall~x,y\in A.$$

\begin{defi} A relative Rota-Baxter Lie algebra is a triple $((A, [ \ ,
\ ]_A),(V,\rho),T)$, where $(A, [ \ , \
]_{A})$ is a Lie algebra, $\rho :A
\longrightarrow gl(V)$ is a representation of $A$ on the
vector space $V$ and $T:V\longrightarrow A$ is a relative
Rota-Baxter operator, i.e.,
\begin{equation*}[T(u),T(v)]_{A} = T(\rho(T(u))v- \rho(T(v))u), ~~\forall~u,
v\in V.\end{equation*}
$((A, [ \ ,
\ ]_A),(V,\rho),T)$ is called abelian if $(A, [ \ , \ ]_A)$ is an abelian Lie algebra and 
$(V,\rho)$ is a trivial representation of $(A, [ \ , \ ]_A)$, i.e.,
 $[x,y]_{A}=0$ and $\rho(x)v=0$ for all ~$x,y\in A,v\in V$.
  \end{defi}

Let $((A, [ \ , \ ]_{A}),(V,\rho),T)$ and $((A', [ \ ,
\ ]_{A'}),(V',\rho'),T')$ be two
relative Rota-Baxter Lie algebras. A homomorphism from $((A, [ \ , \ ]_{A}),(V,\rho),T)$ to $((A', [ \ ,
\ ]_{A'}),(V',\rho'),T')$ consisting of a Lie algebra homomorphism $\varphi:A\longrightarrow A'$
 and a linear map $\phi:V\longrightarrow V'$ such that
\begin{equation}\label{Irp1}T'\phi=\varphi T,\end{equation}
\begin{equation}\label{Irp2}\phi\rho(x)(v)=\rho'(\varphi(x))(\phi(v)),~~\forall~x\in A,v\in V.\end{equation}

Denote by $\mathrm{Aut}(\clA)$ the set of all automorphisms of the relative Rota-Baxter Lie algebra
$\clA=((A, [ \ , \ ]_{A}),(V,\rho),T)$. It is clear that $\mathrm{Aut}(\clA)$ is a group.

\begin{defi} A representation of a relative Rota-Baxter Lie algebra $((A
, [ \ , \ ]_{A}),(V,\rho),T)$ on a 2-term complex of
vector spaces $M\xrightarrow{S}B$ is a triple
$(\rho_{B}, \rho_{M}, \mu)$, where $\rho_{B}:A \rightarrow \mathrm{gl}(B)$ and $\rho_{M
}:A \rightarrow \mathrm{gl}(M)$ are representations of the Lie
algebra $A$ on the vector spaces $B$ and $M$
respectively, and $\mu:V\rightarrow \mathrm{Hom}(B, M)$
is a linear map such that for all $x\in
A,v\in V$,
\begin{equation}\label{Re1}\mu(\rho(x)v)=\rho_{M}(x)\mu(v)-\mu(v)\rho_{B
}(x),\end{equation}
\begin{equation}\label{Re2}\rho_{B}(T(v))S=S\rho_{M}(T(v))+S\mu(v)S.\end{equation} \end{defi}

\begin{pro} Let $\clB=(M\xrightarrow{S}B,\rho_{B}, \rho_{M}, \mu)$ be a representation of 
a relative Rota-Baxter Lie algebra $\clA=((A, [ \ , \ ]_{A}),(V,\rho),T)$. Then 
$((A\oplus B, [ \ , \ ]_{\ltimes}),(V\oplus M,\rho_{\ltimes}),T_{\ltimes})$
is a relative Rota-Baxter Lie algebra, where $ [ \ , \ ]_{\ltimes}$ is the Lie bracket defined by
\begin{equation*}[x+a , y+b]_{\ltimes}=[x,y]_{A}+\rho_{B}(x)b-\rho_{B}(y)a,~~\forall~x,y\in A,a,b\in B,
\end{equation*}
the representation $\rho_{\ltimes}:A\oplus B\longrightarrow \mathrm{gl}(V\oplus M)$ is given by
\begin{equation*}\rho_{\ltimes}(x+a)(v+m)=\rho(x)v+\rho_{M}(x)m-\mu(v)a,~~\forall~x\in A,a\in B,v\in V,m\in M,
\end{equation*}
and the relative Rota-Baxter operator $T_{\ltimes}:V\oplus M\longrightarrow A\oplus B$ is defined by
\begin{equation*}T_{\ltimes}(v+m)=T(v)+S(m),~~\forall~v\in V,m\in M.
\end{equation*}
This relative Rota-Baxter Lie algebra is called the semidirect product of $\clA=((A, [ \ , \ ]_{A}),(V,\rho),T)$ and representation
$\clB=(M\xrightarrow{S}B,\rho_{B}, \rho_{M}, \mu)$. Denote it simply by $\clA \ltimes \clB$.\end{pro}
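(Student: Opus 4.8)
The plan is to verify in turn the three defining properties of a relative Rota-Baxter Lie algebra for the triple $((A\oplus B,[\ ,\ ]_{\ltimes}),(V\oplus M,\rho_{\ltimes}),T_{\ltimes})$: that $[\ ,\ ]_{\ltimes}$ is a Lie bracket, that $\rho_{\ltimes}$ is a representation of $(A\oplus B,[\ ,\ ]_{\ltimes})$, and that $T_{\ltimes}$ satisfies the relative Rota-Baxter identity.

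First I would check that $(A\oplus B,[\ ,\ ]_{\ltimes})$ is a Lie algebra. This is the classical semidirect product of the Lie algebra $A$ with the abelian ideal $B$, on which $A$ acts through $\rho_{B}$. Antisymmetry is immediate from the antisymmetry of $[\ ,\ ]_{A}$, and expanding the Jacobi identity reduces it to the Jacobi identity in $A$ together with the representation property $\rho_{B}([x,y]_{A})=\rho_{B}(x)\rho_{B}(y)-\rho_{B}(y)\rho_{B}(x)$.

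Next I would verify the representation axiom $\rho_{\ltimes}([x+a,y+b]_{\ltimes})=\rho_{\ltimes}(x+a)\rho_{\ltimes}(y+b)-\rho_{\ltimes}(y+b)\rho_{\ltimes}(x+a)$ as operators on $V\oplus M$. Expanding both sides and comparing components, the $V$-component collapses to the representation identity for $\rho$, while the $M$-component separates into a piece $\rho_{M}([x,y]_{A})m$, handled by $\rho_{M}$ being a representation, together with cross-terms coupling $a$ and $b$ to $v$ through $\mu$. Collecting the coefficients of $a$ and of $b$ individually, each block is precisely the compatibility identity \eqref{Re1}, so this step follows by invoking \eqref{Re1} twice.

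The substantive work is the third step, verifying
\[
[T_{\ltimes}(v+m),T_{\ltimes}(w+n)]_{\ltimes}=T_{\ltimes}\big(\rho_{\ltimes}(T_{\ltimes}(v+m))(w+n)-\rho_{\ltimes}(T_{\ltimes}(w+n))(v+m)\big).
\]
Writing $T_{\ltimes}(v+m)=T(v)+S(m)$ and expanding, the $A$-component of each side reduces exactly to the relative Rota-Baxter identity for $T$, that is $[T(v),T(w)]_{A}=T(\rho(T(v))w-\rho(T(w))v)$. The $B$-component is where all four spaces interact; after collecting the terms multiplying $m$ and those multiplying $n$, each block becomes the operator identity $\rho_{B}(T(v))S=S\rho_{M}(T(v))+S\mu(v)S$, which is \eqref{Re2}. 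I expect this $B$-component to be the only genuinely delicate point, since it is where careful bookkeeping across the summands $A,B,V,M$ is needed; the remaining verifications are routine expansions.
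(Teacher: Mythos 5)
Your verification is correct, and it identifies exactly the right mechanism: the Lie-algebra and representation axioms for $[\ ,\ ]_{\ltimes}$ and $\rho_{\ltimes}$ reduce to $\rho_B$, $\rho_M$ being representations together with two applications of \eqref{Re1} (one for the coefficient of $a$, one for $b$), and the relative Rota-Baxter identity for $T_{\ltimes}$ splits into the identity for $T$ in the $A$-component and two applications of \eqref{Re2} (one for the terms in $m$, one for those in $n$) in the $B$-component. The paper states this proposition without proof, so there is nothing to compare against beyond noting that your direct componentwise expansion is the standard argument and that it goes through as you describe.
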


\begin{defi} Let $\clA=((A
, [ \ , \ ]_{A}),(V,\rho),T)$ be a relative Rota-Baxter Lie algebra. A derivation $d_{\clA}=(d_A,d_{V})$ on $\clA$
 consists of linear maps $d_A\in\mathrm{Hom}(A, A)$ and $d_V\in\mathrm{Hom}(V, V)$
such that $d_A$ is a derivation on $A$ and the following equalities are satisfied:
\begin{equation}\label{D1}Td_{V}=d_{A}T,
\end{equation}
\begin{equation}\label{D2}d_{V}(\rho(x)v)=\rho(x)d_{V}(v)+\rho(d_{A}(x))v,~~\forall~x\in A,~
v\in V.
\end{equation}
\end{defi}

Denote the set of all derivations on $\clA=((A
, [ \ , \ ]_{A}),(V,\rho),T)$ by $\mathrm{Der}(\clA)$. It is easy to check that $\mathrm{Der}(\clA)$ is a Lie algebra.

Next, let us recall the cohomology theory of relative Rota-Baxter Lie
algebras investigated in \cite{35}.
 Let $(M\xrightarrow{S}B,\rho_{B}, \rho_{M}, \mu)=\clB$ be a representation
of a relative Rota-Baxter Lie algebra $((A, [ \ , \
]_{A}),(V,\rho),T)=\clA$. In view of Lemma 3.1 \cite{125}, $(A,\rho_T)$ is a representation
 of the Lie algebra $(V,[ \ , \  ]_{T})$ with $\rho_T(u)(x)=[Tu,x]_{A}+T\rho(x)(u),~
 [u,v]_T=\rho(Tu)v-\rho(Tv)u,~\forall~x\in A,u,v\in V$.

Define the $n$-cochains group $C^{n}(\clA,\clB)$ as follows:
\begin{equation}\label{DLY}
	\mathcal{C}^{n}(\clA,\clB)=\left\{
	\begin{aligned}
		&\mathrm{Hom}(A,B)\oplus \mathrm{Hom}(V,M),&n=1,\\
		&\Big(\mathrm{Hom}(\wedge^{n}A,B)\oplus
\mathrm{Hom}(\wedge^{n-1}A\otimes V,M)\Big) \oplus
\mathrm{Hom}(\wedge^{n-1}V,B),&n\geq 2.
	\end{aligned}
	\right.
\end{equation}
 The coboundary
operator $$\mathcal{D_{R}}:\mathcal{C}^{n}(\clA,\clB)\longrightarrow \mathcal{C}^{n+1}(\clA,\clB)$$
 is given by
\begin{equation}\label{eq3.1}\mathcal{D_{R}}(f,\theta)=(\delta f,\partial \theta+h_{T}(f))
=((\delta f)_{B},(\delta f)_{M},\partial
\theta+h_{T}(f)),\end{equation} for any pair $f=(f_{B},f_M)\in \mathrm{Hom}(\wedge^{n}A,B)\oplus
\mathrm{Hom}(\wedge^{n-1}A\otimes V,W)$ and $\theta\in
\mathrm{Hom}(\wedge^{n-1}V,B)$. More precisely, the maps
\begin{equation*}\delta:\mathrm{Hom}(\wedge^{n}A,B)\oplus
\mathrm{Hom}(\wedge^{n-1}A\otimes V,M)\longrightarrow
\mathrm{Hom}(\wedge^{n+1}A,B)\oplus
\mathrm{Hom}(\wedge^{n}A\otimes V,M),\end{equation*}
\begin{equation*}\partial:\mathrm{Hom}(\wedge^{n-1}V,B)\longrightarrow
\mathrm{Hom}(\wedge^{n}V,B),\end{equation*}
\begin{equation*}h_{T}(f):\mathrm{Hom}(\wedge^{n}A,B)\oplus
\mathrm{Hom}(\wedge^{n-1}A\otimes V,M)\longrightarrow
\mathrm{Hom}(\wedge^{n}V,B),\end{equation*} are defined respectively by
\begin{align}\label{eq3.2}&(\delta f)_B(x_1,\cdot\cdot\cdot,x_{n+1})\nonumber
\\=&\sum_{i=1}^{n+1}(-1)^{i+1}\rho_{B}(x_i)f_{B}(x_1,\cdot\cdot\cdot,\hat{x}_{i},\cdot\cdot\cdot,x_{n+1})
\nonumber
\\&+ \sum_{1\leq i< j\leq n+1}(-1)^{i+j}f_{B
}([x_i,x_j]_{A},x_1,\cdot\cdot\cdot,\hat{x}_{i},\cdot\cdot\cdot,\hat{x}_{j},\cdot\cdot\cdot,x_{n+1}),
\end{align}
\begin{align}\label{eq3.3}&(\delta f)_{M}(x_1,\cdot\cdot\cdot,x_{n},v)\nonumber
\\=&\sum_{1\leq i< j\leq n}(-1)^{i+j}f_{M
}([x_i,x_j]_{A},x_1,\cdot\cdot\cdot,\hat{x}_{i},\cdot\cdot\cdot,\hat{x}_{j},\cdot\cdot\cdot,x_{n},v)\nonumber
\\&-(-1)^{n-1}\mu(v)f_{B}(x_1,\cdot\cdot\cdot,\hat{x}_{i},\cdot\cdot\cdot,x_{n}) \nonumber
\\&+
\sum_{i=1}^{n}(-1)^{i+1}(\rho_{M}(x_i)f_{M
}(x_1,\cdot\cdot\cdot,\hat{x}_{i},\cdot\cdot\cdot,x_{n},v) -f_{M
}(x_1,\cdot\cdot\cdot,\hat{x}_{i},\cdot\cdot\cdot,x_{n},\rho(x_i)v),
\end{align}
\begin{align}\label{eq3.4}&(\partial \theta)(v_1,\cdot\cdot\cdot,v_{n})\nonumber
\\=&\sum_{i=1}^{n}(-1)^{i+1}\rho_{B}(Tv_i)\theta(v_1,\cdot\cdot\cdot,\hat{v}_{i},\cdot\cdot\cdot,v_{n})
-\sum_{i=1}^{n}(-1)^{i+1}
S(\mu(v_i)\theta(v_1,\cdot\cdot\cdot,\hat{v}_{i},\cdot\cdot\cdot,v_{n}))
\nonumber
\\&+ \sum_{1\leq i< j\leq
n}(-1)^{i+j}\theta(\rho(Tv_i)v_j-\rho(Tv_j)v_i,v_1,\cdot\cdot\cdot,\hat{v}_{i},\cdot\cdot\cdot,\hat{v}_{j},\cdot\cdot\cdot,v_{n}),
\end{align}
\begin{align}\label{eq3.5}&(h_{T}f)(v_1,\cdot\cdot\cdot,v_{n})\nonumber
\\=&(-1)^{n}f_{B}(Tv_1,\cdot\cdot\cdot,Tv_n)
+\sum_{i=1}^{n}(-1)^{i+1}
Sf_{M}(Tv_1,\cdot\cdot\cdot,Tv_{i-1},Tv_{i+1},\cdot\cdot\cdot,Tv_{n},v_i),
\end{align}
for any $x_{1},\cdot\cdot\cdot,x_{n}\in A$ and
$v_{1},\cdot\cdot\cdot,v_{n},v\in V$.

Associated to the cochain complex
  $(\mathcal{C}^{*}(\clA,\clB), \mathcal{D_{R}})$, the set of all $n$-cocycles and
$n$-coboundaries are denoted respectively by $\mathcal{Z}^{n}(\clA,\clB)$ and $\mathcal{B}^{n}(\clA,\clB)$. 
Denote the quotient by
\begin{equation}\label{DLY}
	\mathcal{H}^{n}(\clA,\clB)=\left\{
	\begin{aligned}
		&\mathcal{Z}^{1}(\clA,\clB ),&n=1,\\
		&\mathcal{Z}^{n}(\clA,\clB )/\mathcal{B}^{n}(\clA,\clB),&n\geq 2,
	\end{aligned}
	\right.
\end{equation}
which is called the $n$-cohomology group of
 $\clA$ with
 coefficients in the representation $\clB$.

We interpret the 1-cocycyle and 2-cocycle.

For all $\varphi=(\varphi_B,\varphi_M)\in \mathcal{C}^{1}(\clA,\clB)$, write
$\mathcal{D_{R}}(\varphi_B,\varphi_M)=((\delta\varphi)_B,(\delta\varphi)_M,h_{T}(\varphi))$.
 By direct calculation, for all $x,y\in A,v\in V$, we get
\begin{equation}\label{Coy1}(\delta \varphi)_{B}(x,y)=\rho_{B}(x)\varphi_{B}(y)-\rho_{B}(y)\varphi_{B}(x)-\varphi_{B}([x,y]_A)
,\end{equation}
\begin{equation}\label{Coy2}(\delta \varphi)_{M}(x,v)=-\mu(v)\varphi_{B}(x)+\rho_{M}(x)\varphi_{M}(v)-\varphi_{M}(\rho(x)v)
,\end{equation}
\begin{equation}\label{Coy3}(h_{T} \varphi)(v)=-\varphi_{B}(Tv)
+S\varphi_{M}(v).\end{equation}
Thus,
\begin{align}
		\mathcal{Z}^{1}(\clA,\clB )=&\left\{\varphi=(\varphi_B,\varphi_M)\in \mathcal{C}^{1}(\clA,\clB)
\left|\begin{aligned}&
   \rho_{B}(x)\varphi_{B}(y)-\rho_{B}(y)\varphi_{B}(x)=\varphi_{B}([x,y]_A),
   \\&\mu(v)\varphi_{B}(x)+\varphi_{M}(\rho(x)v)=\rho_{M}(x)\varphi_{M}(v),
     \\&\varphi_{B}(Tv)=S\varphi_{M}(v),~\forall~x,y\in A,v\in V.
     \end{aligned}\right.\right\}.\label{Cy1}
	\end{align}
\begin{align}
		\mathcal{B}^{2}(\clA,\clB )=&\left\{((\delta\varphi)_B,(\delta\varphi)_M,h_{T}\varphi)\in \mathcal{C}^{2}(\clA,\clB)
|\varphi=(\varphi_B,\varphi_M)\in \mathcal{C}^{1}(\clA,\clB)
    \right\}.\label{Cb1}
	\end{align}
 Write $\mathcal{D_{R}}(f,\theta)=((\delta f)_B,(\delta f)_M,\partial\theta +h_{T}(f))$
for all $(f,\theta)\in \mathcal{C}^{2}(\clA,\clB)$ with $f=(f_B,f_M)$.
By direct computations, for all $x,y,z\in A,v,v_1,v_2\in V$, we obtain
 \begin{align*}(\delta f)_{B}(x,y,z)=&\rho_{B}(x)f_{B}(y,z)+\rho_{B}(z)f_{B}(x,y)-\rho_{B}(y)f_{B}(x,z)
  \\&-f_{B}([x,y]_A,z)-f_{B}([y,z]_A,x)+f_{B}([x,z]_A,y),\end{align*}
 \begin{align*}(\delta f)_{M}(x,y,v)=&-f_{M}([x,y]_A,v)+\mu(v)f_{B}(x,y
)+\rho_{M}(x)f_{M}(y,v)\\&-\rho_{M}(y)f_{M}(x,v)-f_{M}(x,\rho_{V}(y)v)+f_{M}(y,\rho_{V}(x)v),\end{align*}
\begin{align*}(\partial \theta+h_{T} f)(v_1,v_2)
=&\rho_{B}(Tv_1)\theta(v_2)-\rho_{B}(Tv_2)\theta(v_1)-S(\mu(v_1)\theta(v_2)-\mu(v_2)\theta(v_1))
-\theta(\rho_{B}(Tv_1)v_2
\\&-\rho_{B}(Tv_2)v_1)+f_{B}(Tv_1,Tv_2)-S(f_{M}(Tv_1,v_2)-f_{M}(Tv_2,v_1)).\end{align*}
So, \begin{align}
		\mathcal{Z}^{2}(\clA,\clB )=&\left\{(f_B,f_M,\theta)\in \mathcal{C}^{2}(\clA,\clB)
\left|\begin{aligned}&
 (\delta f)_{B}(x,y,z)=0,~(\delta f)_{M}(x,y,v)=0,\\&(\partial \theta+h_{T} f)(v_1,v_2)=0,~\forall~x,y,z\in A,v,v_1,v_2\in V.
     \end{aligned}\right.\right\}.\label{Cy2}
	\end{align}


\section{Non-abelian
extensions and non-abelian 2-cocycles of relative Rota-Baxter Lie algebras}

In this section, we are devoted to considering non-abelian
extensions and non-abelian 2-cocycles of relative Rota-Baxter Lie algebras. More precisely,
we define the non-abelian second cohomology group and show that
the non-abelian extensions are classified by the non-abelian second cohomology groups.

\begin{defi} Let $((A, [ \ , \ ]_{A}),(V,\rho),T)=\clA$ and $((B, [ \ , \ ]_{B}),(M,\nu_M),S)=\clB$ be two
relative Rota-Baxter Lie algebras.
A non-abelian 2-cocycle on $\clA$ with values in
 $\clB$ is a sextuple $(\omega,\varpi,\chi,\mu,\rho_B,\rho_M)$
of maps such that $\omega:A\otimes A\longrightarrow B,~\varpi:A\otimes V\longrightarrow M$ are bilinear
and $\rho_B:A\longrightarrow \mathfrak{gl}(B),\rho_M:A\longrightarrow \mathfrak{gl}(M)
~\mu:V\longrightarrow \mathrm{Hom}(B,M),~\chi:V\longrightarrow B$ are linear,
and the following equations are satisfied for all
$ x, y, z\in A, ~a\in  B, ~v,v_1,v_2\in V, ~m\in M$,
\begin{equation}\label{L1}
\omega(x,y)+\omega(y,x)=0,
\end{equation}
\begin{equation}\label{L2}\rho_{B}(x)\rho_{B}(y)a-\rho_{B}(y)\rho_{B}(x)a-\rho_{B}([x,y]_A)a=[\omega(x,y),a]_B,\end{equation}
\begin{equation}\label{L3}\rho_{B}(x)\omega(y,z)+\rho_{B}(y)\omega(z,x)+\rho_{B}(z)\omega(x,y)
=\omega([x,y]_A,z)+\omega([y,z]_A,x)+\omega([z,x]_A,y),\end{equation}
\begin{equation}\label{L4}\varpi(x,\rho(y)v)-\varpi(y,\rho(x)v)-\varpi([x,y]_A,v)
=\rho_{M}(y)\varpi(x,v)-\rho_{M}(x)\varpi(y,v)-\mu(v)\omega(x,y),\end{equation}
\begin{equation}\label{L5}\rho_{M}(x)\rho_{M}(y)m-\rho_{M}(y)\rho_{M}(x)m=\rho_{M}([x,y]_A)m+\nu_{M}(\omega(x,y))m,\end{equation}
\begin{equation}\label{L6}\rho_{M}(x)\mu(v)a-\mu(\rho(x)v)a+\nu_{M}(a)\varpi(x,v)=\mu(v)\rho_{B}(x)a,\end{equation}
\begin{equation}\label{L7}\rho_{M}(x)\nu_{M}(a)m-\nu_{M}(a)\rho_{M}(x)m=\nu_{M}(\rho_B(x)a)m,\end{equation}
\begin{equation}\label{L8}\rho_{B}(Tv)S(m)+[\chi(v),S(m)]_B=S(\rho_{M}(Tv)m+\nu_{M}(\chi(v))m+\mu(v)S(m)),\end{equation}
\begin{align}\label{L9}&\rho_{B}(Tv_1)\chi(v_2)-\rho_{B}(Tv_2)\chi(v_1)+[\chi(v_1),\chi(v_2)]_B+\omega(Tv_1,Tv_2)
\\\nonumber=&S(\varpi(Tv_1,v_2)-\varpi(Tv_2,v_1))+\chi(\rho(Tv_1)v_2-\rho(Tv_2)v_1)+S(\mu(v_1)\chi(v_2)-\mu(v_2)\chi(v_1))
.\end{align}
\end{defi}

\begin{defi} Let $(\omega,\varpi,\chi,\mu,\rho_B,\rho_M)$ and
$(\omega',\varpi',\chi',\mu',\rho^{'}_{B},\rho^{'}_{M})$ be two
non-abelian 2-cocycles on $((A, [ \ , \ ]_{A}),(V,\rho),T)=\clA$ with values in
 $((B, [ \ , \ ]_{B}),(M,\nu_M),S)=\clB$. They are said to be equivalent if there
exist linear maps $\zeta:A\longrightarrow B$ and $\eta:V\longrightarrow M$
such that for all $x, y\in A, a\in B,m\in M$ and $v\in V$,
 \begin{equation}\label{E1}\omega(x,y)-\omega'(x,y)=\rho^{'}_{B}(x)\zeta(y)-\rho^{'}_{B}(y)\zeta(x)-\zeta([x,y]_A)+[\zeta(x),\zeta(y)]_B,
 \end{equation}
\begin{align}\label{E2}\rho_{B}(x)a-\rho^{'}_{B}(x)a=[\zeta(x),a]_B,
 \end{align}
\begin{equation}\label{E3}\rho_{M}(x)m-\rho^{'}_{M}(x)m=\nu_{M}(\zeta(x))m,\end{equation}
 \begin{equation}\label{E4}\mu(v)a-\mu^{'}(v)a=-\nu_{M}(a)\eta(v),\end{equation}
 \begin{equation}\label{E5}\chi(v)-\chi^{'}(v)=S\eta(v)-\zeta T(v),\end{equation}
\begin{equation}\label{E6}\varpi(x,v)-\varpi'(x,v)=\rho^{'}_{M}(x)\eta(v)+\nu_{M}(\zeta(x))\eta(v)-\mu^{'}(v)\zeta(x)-\eta(\rho(x)v).
\end{equation}
 \end{defi}

 Denote the set of all non-abelian 2-cocycles by $\mathcal{Z}^{2}_{nab}(\clA,\clB)$. The non-abelian second cohomology group
$\mathcal{H}^{2}_{nab}(\clA,\clB)$ is the quotient of $\mathcal{Z}^{2}_{nab}(\clA,\clB)$ by the above
equivalence relation.

 Let $(\omega,\varpi,\chi,\mu,\rho_B,\rho_M)$ be a non-abelian 2-cocycle
 on $((A, [ \ , \ ]_{A}),(V,\rho),T)$ with values in an abelian relative Rota-Baxter Lie algebra $((B, [ \ , \ ]_{B}),(M,\nu_M),S)$.
By Eq.~(\ref{L2}), $(B,\rho_B)$ is a representation of $A$, while, Eq.~(\ref{L5}) implies that $(M,\rho_M)$ is a representation of $A$.
 According to Eqs.~(\ref{L6}) and (\ref{L8}), we know that Eqs.~(\ref{Re1})-(\ref{Re2}) hold. Thus,
 $(\rho_B,\rho_M,\mu)$ is a representation of $((A, [ \ , \ ]_{A}),(V,\rho),T)$ on $M\xrightarrow{S}B$.
 In the light of Eqs.~(\ref{L3}), (\ref{L4}) and (\ref{L9}), we know that $(\omega,\varpi,\chi)$ is a 2-cocycle of
  $((A, [ \ , \ ]_{A}),(V,\rho),T)$ with coefficients in the representation $(M\xrightarrow{S}B,\rho_{B},
\rho_{M}, \mu)$. Thus, we have

 \begin{rmk} \label{Rk1} Let $(\omega,\varpi,\chi,\mu,\rho_B,\rho_M)$ be a non-abelian 2-cocycle
 on $((A, [ \ , \ ]_{A}),(V,\rho),T)=\clA$ with values in $((B, [ \ , \ ]_{B}),(M,\nu_M),S)=\clB$. If
 $\clB$ is abelian, then
 $(\rho_B,\rho_M,\mu)$ is a representation of $\clA$
  on $M\xrightarrow{S}B$. Furthermore, $(\omega,\varpi,\chi)$ is a 2-cocycle of
  $\clA$ with
 coefficients in the representation $(M\xrightarrow{S}B,\rho_{B},
\rho_{M}, \mu)$ and $\mathcal{H}_{nab}^{2}\clA,\clB)=\mathcal{H}^{2}(\clA,\clB)$.
 \end{rmk}

Using the above notations, we define maps $[  \ ,  \ ]_{\omega}:(A\oplus B)\otimes (A\oplus B)\longrightarrow (A\oplus B),
~\rho_{\varpi}:A\oplus B\longrightarrow \mathfrak{gl}(V\oplus M)$
and $T_{\chi}:(V\oplus M)\longrightarrow (A\oplus B)$ respectively by
\begin{align}\label{NLts0}[x+a,y+b]_{\omega}&=[x,y]_{A}+\omega(x,y)+\rho_B(x)b-\rho_B(y)a+[a,b]_{B}
,\end{align}
\begin{align}\label{NLts1}\rho_{\varpi}(x+a)(v+m)=\rho(x)v+\rho_{M}(x)m-\mu(v)a+\varpi(x,v)+\nu_{M}(a)m,\end{align}
\begin{align}\label{NLts2}T_{\chi}(v+m)=T(v)+S(m)+\chi(v),\end{align}
for all $x,y\in A,a,b\in B,v\in V$ and $m\in M$.

\begin{pro} \label{LY} With the above notions,
$((A\oplus B,[  \  ,  \ ]_{\omega}),(V\oplus M,\rho_{\varpi}),T_{\chi})$ is a relative Rota-Baxter Lie algebra
 if and only if the sextuple $(\omega,\varpi,\chi,\mu,\rho_B,\rho_M)$
is a non-abelian 2-cocycle on $((A, [ \ , \ ]_{A}),(V,\rho),T)=\clA$ 
with values in $((B, [ \ , \ ]_{B}),(M,\nu_M),S)=\clB$. 
Denote this relative Rota-Baxter Lie algebra
  simply by $\clA\oplus_{(\omega,\varpi,\chi)} \clB$.
\end{pro}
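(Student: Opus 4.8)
The plan is to verify directly the three defining axioms of a relative Rota-Baxter Lie algebra for the triple $\clA\oplus_{(\omega,\varpi,\chi)}\clB$, reading each resulting identity in both directions so as to obtain the ``if and only if'' at once. Since $[\ ,\ ]_\omega$, $\rho_\varpi$ and $T_\chi$ are all given by multilinear formulas, every identity to be checked is multilinear in its arguments, so it suffices to evaluate it on ``pure'' arguments, i.e.\ on elements lying entirely in $A$ or entirely in $B$ (resp.\ in $V$ or in $M$), and then to collect the output according to its $A/B$ (resp.\ $V/M$) type. The entire proof is then a bookkeeping of which component of which axiom reproduces which of the relations \eqref{L1}--\eqref{L9}; because the matching is term-by-term, a single computation yields both implications: the cocycle relations make every component vanish, and conversely specializing the arguments recovers each relation as a necessary condition.

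First I would handle the Lie bracket $[\ ,\ ]_\omega$. Antisymmetry reduces immediately to \eqref{L1} together with the antisymmetry of $[\ ,\ ]_A$ and $[\ ,\ ]_B$. For the Jacobi identity I would expand the cyclic sum on a triple $x+a,\,y+b,\,z+c$ and sort the output by the number of $B$-entries: the three-$A$ part combines the Jacobi identity of $A$ with the $\omega$-cocycle relation \eqref{L3}; the two-$A$-one-$B$ part is exactly \eqref{L2}; and the lower parts encode the Jacobi identity of $B$ and the fact that each $\rho_B(x)$ is compatible with $[\ ,\ ]_B$, the structural relations accompanying the cocycle and the Lie structure of $\clB$. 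Next I would show $\rho_\varpi$ is a representation of $(A\oplus B,[\ ,\ ]_\omega)$, i.e.\ $\rho_\varpi([X,Y]_\omega)=\rho_\varpi(X)\rho_\varpi(Y)-\rho_\varpi(Y)\rho_\varpi(X)$. Evaluating on $v\in V$ and on $m\in M$ for the various $A/B$ types of $X,Y$ produces, respectively, the $A$-representation property of $\rho$ together with \eqref{L4}; relation \eqref{L5}; relation \eqref{L6}; relation \eqref{L7}; and finally the $B$-module property of $\nu_M$ together with the compatibility of $\mu$ with $[\ ,\ ]_B$.

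The main work, and the step I expect to be the real obstacle, is the relative Rota-Baxter operator identity
\[
[T_\chi(w_1),T_\chi(w_2)]_\omega=T_\chi\big(\rho_\varpi(T_\chi(w_1))w_2-\rho_\varpi(T_\chi(w_2))w_1\big),\qquad w_i=v_i+m_i,
\]
because $T_\chi$ entangles $T$, $S$ and $\chi$ simultaneously, so its expansion is by far the longest and couples all of the data. I would project this identity onto $A$ and onto $B$, keeping the $A$- and $B$-valued pieces of $T_\chi(w_i)=T(v_i)+\big(S(m_i)+\chi(v_i)\big)$ separate throughout. The $A$-component collapses to the relative Rota-Baxter identity for $T$ in $\clA$. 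For the $B$-component I would isolate three cases: putting $m_1=m_2=0$ gives precisely \eqref{L9}; putting $v_1=v_2=0$ collapses to the relative Rota-Baxter identity for $S$ in $\clB$; and the mixed case $w_1=m_1,\ w_2=v_2$ yields exactly \eqref{L8}. The only genuine care needed is in the sign bookkeeping of the cyclic Jacobi sum and in isolating these three specializations of the $T_\chi$ expansion; once the correspondence is laid out componentwise, both directions of the equivalence read off simultaneously.
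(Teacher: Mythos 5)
Your proposal is correct and follows essentially the same route as the paper: the paper's proof likewise reduces the claim to the three axioms (Lie bracket $\Leftrightarrow$ \eqref{L1}--\eqref{L3}, representation $\Leftrightarrow$ \eqref{L4}--\eqref{L7}, relative Rota-Baxter operator $\Leftrightarrow$ \eqref{L8}--\eqref{L9}) and leaves the componentwise expansion as a direct computation. Your specializations for the $T_\chi$ identity (pure $V$, pure $M$, and the mixed case) correctly recover \eqref{L9}, the Rota-Baxter identity for $S$, and \eqref{L8} respectively, so the plan is sound and in fact more detailed than the published argument.
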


\begin{proof}
$((A\oplus B,[  \  ,  \ ]_{\omega}),(V\oplus W,\rho_{\varpi}),T_{\chi})$ is a relative Rota-Baxter Lie algebra if and only if
(i) $(A\oplus B,[  \  ,  \ ]_{\omega})$ is a Lie algebra. (ii) $(V\oplus M,\rho_{\varpi})$ is a representation of
$(A\oplus B,[  \  ,  \ ]_{\omega})$. 
(iii) $T_{\chi}$ is a relative Rota-Baxter operator associated with the representation $(V\oplus M,\rho_{\varpi})$.
Under the context of Lie algebras, $(A\oplus B,[  \  ,  \ ]_{\omega})$ is a Lie algebra if and only if Eqs.~(\ref{L1})-(\ref{L3}) hold.
 By direct computations, we can check that
$(V\oplus M,\rho_{\varpi})$ is a representation of $(A\oplus B,[  \  ,  \ ]_{\omega})$ if and only if Eqs.~(\ref{L4})-(\ref{L7}) hold,
$T_{\chi}$ is a relative Rota-Baxter operator if and only if Eqs.~(\ref{L8})-(\ref{L9}) hold.

This completes the proof.
\end{proof}

\begin{defi} Let $((A, [ \ , \ ]_{A}),(V,\rho),T)=\clA$ and $((B, [ \ , \ ]_{B}),(M,\nu_M),S)=\clB$ be two
relative Rota-Baxter Lie algebras. A non-abelian
extension of $\clA$ by $\clB$ is a
relative Rota-Baxter Lie algebra $((\hat{A}, [ \ , \ ]_{\hat{A}}),(\hat{V},\hat{\rho}),\hat{T})=\clE$,
which fits into a short exact sequence of relative Rota-Baxter Lie algebras
\begin{equation}\label{Ene1} \xymatrix{
   0 \ar[r] & M \ar[d]_{S} \ar[r]^{i} & \hat{V}\ar[d]_{\hat{T}} \ar[r]^{p} & V \ar[d]_{T} \ar[r] & 0\\
  0 \ar[r] & B \ar[r]^{i} & \hat{A} \ar[r]^{\mathfrak{p}} &A  \ar[r] & 0
 .}\end{equation}
 We always write the diagram (\ref{Ene1}) simply by
 \begin{equation*}\mathcal{E}:0\longrightarrow\clB\stackrel{(\mathfrak i,i)}{\longrightarrow}
\clE\stackrel{(\mathfrak p,p)}{\longrightarrow}\clA\longrightarrow0.\end{equation*}
When $((B, [ \ , \ ]_{B}),(M,\nu_M),S)$ is abelian, the extension $\mathcal{E}$ is called 
an abelian extension of $((A, [ \ , \ ]_{A}),(V,\rho),T)$ by $((B, [ \ , \ ]_{B}),(M,\nu_M),S)$.
Denote an extension as above simply by $((\hat{A}, [ \ , \ ]_{\hat{A}}),(\hat{V},\hat{\rho}),\hat{T})=\clE$ or $\mathcal{E}$.
A section of $(\mathfrak{p},p)$ consists of
linear maps $\mathfrak{s}:A\longrightarrow\hat{A
}$ and $s:V\longrightarrow \hat{V}$ such that $\mathfrak{p}
\mathfrak{s}=I_{A}$ and $ps = I_{V}$.
\end{defi}

\begin{defi}
Let $((\hat{A}_1, [ \ , \ ]_{\hat{A}_1}),(\hat{V}_1,\hat{\rho}_{1}),\hat{T}_1)=\clE_1$ and
$((\hat{A}_2, [ \ , \ ]_{\hat{A}_2}),(\hat{V}_2,\hat{\rho}_{2}),\hat{T}_2)=\clE_2$
be two non-abelian extensions of $((A, [ \ , \ ]_{A}),(V,\rho),T)=\clA$ by $((B, [ \ , \ ]_{B}),(M,\nu_M),S)=\clB$.
They are said to be
equivalent if there is an isomorphism $(\varphi,\phi)$ of relative Rota-Baxter Lie algebras
such that the following commutative diagram
holds: \begin{equation}\label{Ene2}\xymatrixrowsep{0.36cm} \xymatrixcolsep{0.36cm} \xymatrix{
0 \ar[rr] &  & M \ar[rr] \ar[dd] \ar@{=}[rd] & & \hat{V}_1 \ar[rr] \ar[rd]^{\phi} \ar[dd] & & V \ar[dd] \ar[rr] \ar@{=}[rd] & & 0 \\
 & 0 \ar[rr] & & M \ar[rr] \ar[dd] & & \hat{V}_2 \ar[rr] \ar[dd] & & V \ar[rr] \ar[dd] & & 0 \\
0 \ar[rr] &  & B \ar[rr] \ar@{=}[rd] & & \hat{A}_1 \ar[rr] \ar[rd]^{\varphi} & & A \ar[rr] \ar@{=}[rd] & & 0 \\
 & 0 \ar[rr] & & B \ar[rr] & & \hat{A}_2 \ar[rr] & & A \ar[rr] & & 0. \\
}\end{equation}
If there is no confusion, we always write the diagram (\ref{Ene2}) simply as follows:
 \begin{equation}\label{Ene3} \xymatrix{
  0 \ar[r] & \clB\ar@{=}[d] \ar[r]^{({\mathfrak i}_1,i_1)} & \clE_1\ar[d]_{(\varphi,\phi)} \ar[r]^{({\mathfrak p}_1,p_1)} & \clA \ar@{=}[d] \ar[r] & 0\\
 0 \ar[r] & \clB \ar[r]^{({\mathfrak i}_2,i_2)} & \clE_2 \ar[r]^{({\mathfrak p}_2,p_2)} & \clA  \ar[r] & 0
 .}\end{equation}

\end{defi}

Denote by $\mathrm{Ext}_{nab}(\clA,\clB)$ the set of all equivalent classes of the non-abelian extensions of $\clA$ by $\clB$.

Suppose that $((A, [ \ , \ ]_{A}),(V,\rho),T)=\clA$ and $((B, [ \ , \ ]_{B}),(M,\nu_M),S)=\clB$ are two
relative Rota-Baxter Lie algebras. 
 Let \begin{equation*}\mathcal{E}:0\longrightarrow\clB\stackrel{(\mathfrak i,i)}{\longrightarrow}
\clE\stackrel{(\mathfrak p,p)}{\longrightarrow}\clA\longrightarrow0\end{equation*}
be a non-abelian extension of $\clA$ by $\clB$ with a section $(\mathfrak{s},s)$ of $(\mathfrak{p},p)$, where
 $\clE=((\hat{A}, [ \ , \ ]_{\hat{A}}),(\hat{V},\hat{\rho}),\hat{T})$.
Define bilinear maps $\omega_{\mathfrak{s}}:A\otimes A\rightarrow B
,~\varpi_{s}:A\otimes V\rightarrow M$ and linear maps
$\chi_{s}:V\rightarrow B,~\mu_{s}:V\rightarrow \mathrm{Hom}(B,M),~\rho_{(B,\mathfrak{s})}:A\rightarrow \mathfrak{gl}( B),
~\rho_{(M,\mathfrak{s})}:A\rightarrow \mathfrak{gl}(M)$ respectively by
\begin{equation}\label{C1}\omega_{\mathfrak{s}}(x,y)=[\mathfrak{s}(x),\mathfrak{s}(y)]_{\hat{A}}-\mathfrak{s}[x, y]_{A},
~\varpi_{(\mathfrak{s},s)}(x,v)=\hat{\rho}(\mathfrak{s}(x))s(v)-s(\rho(x)v),\end{equation}
\begin{equation}\label{C2}\chi_{(\mathfrak{s},s)}(v)=\hat{T}(s(v))-\mathfrak{s}(T(v)),~\mu_{s}(v)a=-\hat{\rho}(a)s(v),\end{equation}
\begin{equation}\label{C3}\rho_{(B,\mathfrak{s})}(x)a=[\mathfrak{s}(x), a]_{\hat{A}},
~\rho_{(M,\mathfrak{s})}(x)m=\hat{\rho}
(\mathfrak{s}(x))m\end{equation}
for any $x,y\in A,a\in B,v\in V,m\in M$.

By direct computations, we have
\begin{pro} \label{CY} With the above notions, 
$(\omega_{\mathfrak{s}},\varpi_{s},\chi_{s},\mu_{s},\rho_{(B,\mathfrak{s})},\rho_{(M,\mathfrak{s})})$ is a
non-abelian 2-cocycle on $((A, [ \ , \ ]_{A}),(V,\rho),T)$ with values in
 $((B, [ \ , \ ]_{B}),(M,\nu_M),S)$. We call it the non-abelian 2-cocycle corresponding to the 
 extension $\mathcal{E}$ induced by $(\mathfrak{s},s)$.
 Naturally, $((A\oplus B,[  \  ,  \ ]_{\omega_{\mathfrak{s}}}),(V\oplus M,\rho_{\varpi_{(\mathfrak{s},s)}}),T_{\chi_{(\mathfrak{s},s)}})$
 is a relative Rota-Baxter Lie algebra, where $[  \  ,  \ ]_{\omega_{\mathfrak{s}}},\rho_{\varpi_{(\mathfrak{s},s)}}$
  and $T_{\chi_{(\mathfrak{s},s)}}$ are defined as Eqs.~ (\ref{NLts0})-(\ref{NLts2}).
\end{pro}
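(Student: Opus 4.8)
The plan is to avoid checking Eqs.~(\ref{L1})--(\ref{L9}) one by one, and instead to recognize the extension $\clE$ as an isomorphic copy of the construction in Proposition~\ref{LY}, from which the non-abelian $2$-cocycle condition is immediate. First I would use the section to set up vector-space isomorphisms. Since the rows of (\ref{Ene1}) are exact and $\mathfrak{p}\mathfrak{s}=I_A$, $ps=I_V$, the maps
$$\Phi:A\oplus B\to\hat{A},\ (x,a)\mapsto\mathfrak{s}(x)+\mathfrak{i}(a),\qquad \Psi:V\oplus M\to\hat{V},\ (v,m)\mapsto s(v)+i(m),$$
are linear isomorphisms. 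Under these identifications $\mathfrak{i}(B)=\ker\mathfrak{p}$ and $i(M)=\ker p$ are ideals, and I will write $a$ for $\mathfrak{i}(a)$ and $m$ for $i(m)$.

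Next I would transport the relative Rota-Baxter Lie algebra structure of $\clE$ along $(\Phi,\Psi)$ and compute the resulting operations on $A\oplus B$ and $V\oplus M$. Expanding $[\mathfrak{s}(x)+a,\mathfrak{s}(y)+b]_{\hat{A}}$ by bilinearity and substituting the definitions (\ref{C1}) and (\ref{C3}) of $\omega_{\mathfrak{s}}$ and $\rho_{(B,\mathfrak{s})}$, while using that $\mathfrak{i}$ is a Lie algebra homomorphism (so $[a,b]_{\hat{A}}=[a,b]_B$), recovers exactly the bracket $[\ ,\ ]_{\omega_{\mathfrak{s}}}$ of (\ref{NLts0}). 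Likewise, expanding $\hat{\rho}(\mathfrak{s}(x)+a)(s(v)+m)$ and using (\ref{C1})--(\ref{C3}) together with the homomorphism identity (\ref{Irp2}) for the inclusion $(\mathfrak{i},i)$, which yields $\hat{\rho}(a)m=\nu_M(a)m$, recovers $\rho_{\varpi_{(\mathfrak{s},s)}}$ of (\ref{NLts1}); and $\hat{T}(s(v)+m)$, using (\ref{C2}) together with $\hat{T}i=\mathfrak{i}S$ from (\ref{Irp1}), recovers $T_{\chi_{(\mathfrak{s},s)}}$ of (\ref{NLts2}).

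These computations show that $(\Phi,\Psi)$ is an isomorphism of relative Rota-Baxter Lie algebras from $((A\oplus B,[\ ,\ ]_{\omega_{\mathfrak{s}}}),(V\oplus M,\rho_{\varpi_{(\mathfrak{s},s)}}),T_{\chi_{(\mathfrak{s},s)}})$ onto $\clE$. In particular this triple is a relative Rota-Baxter Lie algebra (being isomorphic to $\clE$), which is the final assertion of the proposition; and by the ``only if'' direction of Proposition~\ref{LY} this forces the sextuple $(\omega_{\mathfrak{s}},\varpi_{(\mathfrak{s},s)},\chi_{(\mathfrak{s},s)},\mu_s,\rho_{(B,\mathfrak{s})},\rho_{(M,\mathfrak{s})})$ to be a non-abelian $2$-cocycle on $\clA$ with values in $\clB$, as desired.

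The only genuine work lies in the second step: carefully expanding the four-term brackets and actions and matching each summand against (\ref{NLts0})--(\ref{NLts2}). The points to be careful about are that $\mathfrak{i}(B)$ and $i(M)$ are ideals, so the mixed brackets and actions land in $B$ and $M$ respectively and the defining formulas (\ref{C1})--(\ref{C3}) apply verbatim, and that the intrinsic $\clB$-operations $[\ ,\ ]_B$, $\nu_M$ and $S$ must be correctly recognized inside $\clE$ via the homomorphism identities (\ref{Irp1})--(\ref{Irp2}) for the inclusion. Once this identification is in place, the appeal to Proposition~\ref{LY} bypasses any direct verification of (\ref{L1})--(\ref{L9}).
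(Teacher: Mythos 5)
Your proposal is correct, and it takes a genuinely different route from the paper. The paper offers no written proof beyond the phrase ``by direct computations,'' i.e.\ the intended argument is to verify each of the nine identities (\ref{L1})--(\ref{L9}) separately by substituting the definitions (\ref{C1})--(\ref{C3}) into the Jacobi identity, the representation axiom and the relative Rota-Baxter identity for $\clE$. You instead transport the structure of $\clE$ along the linear isomorphisms $\Phi,\Psi$ determined by the section, check once that the transported bracket, action and operator are literally the formulas (\ref{NLts0})--(\ref{NLts2}), and then invoke the ``only if'' direction of Proposition~\ref{LY}. This is logically sound: Proposition~\ref{LY} is proved independently of Proposition~\ref{CY}, so there is no circularity, and the key hygiene points --- that $\mathfrak{i}(B)=\ker\mathfrak{p}$ is a Lie ideal, that $i(M)=\ker p$ is $\hat\rho$-invariant, and that $\omega_{\mathfrak{s}},\varpi_{(\mathfrak{s},s)},\chi_{(\mathfrak{s},s)}$ genuinely take values in $B$ and $M$ (all of which follow from applying $\mathfrak{p}$ or $p$ and using that $(\mathfrak{p},p)$ is a homomorphism with $\mathfrak{p}\mathfrak{s}=I_A$, $ps=I_V$) --- are exactly the ones you flag. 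What your approach buys is economy and reuse: the nine cocycle identities never have to be touched, and the isomorphism $(\Phi,\Psi)$ you construct is the same one the paper builds anyway in the discussion following Lemma~\ref{Le1} to show that $\clE$ is equivalent to $\clA\oplus_{(\omega_{\mathfrak{s}},\varpi_{(\mathfrak{s},s)},\chi_{(\mathfrak{s},s)})}\clB$, so your proof effectively front-loads that observation. The trade-off is that the direct verification, while tedious, makes each identity (\ref{L1})--(\ref{L9}) individually transparent as a shadow of a specific axiom of $\clE$, which is occasionally useful later when single identities are cited.
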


In the light of Remark \ref{Rk1} and Proposition \ref{CY}, we have

\begin{rmk} 
When $\mathcal{E}$ is an abelian extension of $\clA$ by $\clB$, $(\omega_{\mathfrak{s}},\varpi_{s},\chi_{s})$ is a
 2-cocycle of $\clA$ with coefficents in
 $\clB$. We call it the 2-cocycle corresponding to the 
 extension $\mathcal{E}$ induced by $(\mathfrak{s},s)$. 
\end{rmk}

In the following, we denote $(\omega_{\mathfrak{s}},\varpi_{s},\chi_{s},\mu_{s},\rho_{(B,\mathfrak{s})},\rho_{(M,\mathfrak{s})})$ 
and $(\omega_{\mathfrak{s}},\varpi_{s},\chi_{s})$ respectively
simply by $(\omega,\varpi,\chi,\mu,\rho_{B},\rho_{M})$ and $(\omega,\varpi,\chi)$ without ambiguity. 

 \begin{lem} \label{Le1}  Assume that $((\hat{A}, [ \ , \ ]_{\hat{A}}),(\hat{V},\hat{\rho}),\hat{T})$ is
 a non-abelian extension of $((A, [ \ , \ ]_{A}),(V,\rho),T)$ by $((B, [ \ , \ ]_{B}),(M,\nu_M),S)$, that is,
 the commutative diagram (\ref{Ene1}) holds.
 Let $(\omega^i,\varpi^i,\chi^i,\mu^i,\rho^{i}_{B},\rho^{i}_M)$ be the non-abelian 2-cocycle
 corresponding to the extension $\mathcal{E}$ induced by the section $(\mathfrak{s}_i,s_i)$~(i=1,2).
 Then the two non-abelian 2-cocycles $(\omega^1,\varpi^1,\chi^1,\mu^1,\rho^{1}_{B},\rho^{1}_M)$ 
 and $(\omega^2,\varpi^2,\chi^2,\mu^2,\rho^{2}_{B},\rho^{2}_M)$
 are equivalent, that is, the equivalent classes of non-abelian 2-cocycles corresponding to
 a non-abelian extension
induced by a section are independent on the choice of sections.
\end{lem}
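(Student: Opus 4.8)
The plan is to produce the equivalence directly from the two sections. Since $\mathfrak{p}\mathfrak{s}_1 = \mathfrak{p}\mathfrak{s}_2 = I_A$ and $ps_1 = ps_2 = I_V$, the differences $\mathfrak{s}_1(x) - \mathfrak{s}_2(x)$ and $s_1(v) - s_2(v)$ are annihilated by $\mathfrak{p}$ and $p$ respectively, hence lie in $\ker\mathfrak{p} = B$ and $\ker p = M$ (identifying $B$, $M$ with their images under $\mathfrak{i} = i$). So I would set
\[
\zeta(x) := \mathfrak{s}_1(x) - \mathfrak{s}_2(x) \in B, \qquad \eta(v) := s_1(v) - s_2(v) \in M,
\]
obtaining linear maps $\zeta\colon A \to B$ and $\eta\colon V \to M$, and I claim these witness the equivalence of the two non-abelian $2$-cocycles in the sense of Eqs.~(\ref{E1})--(\ref{E6}).

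I would then verify those six identities one at a time by substituting $\mathfrak{s}_1 = \mathfrak{s}_2 + \zeta$ and $s_1 = s_2 + \eta$ into the defining formulas (\ref{C1})--(\ref{C3}). The four ``linear'' identities (\ref{E2})--(\ref{E5}) are immediate: for instance $\rho^1_B(x)a - \rho^2_B(x)a = [\mathfrak{s}_1(x) - \mathfrak{s}_2(x), a]_{\hat{A}} = [\zeta(x), a]_B$ gives (\ref{E2}); applying $\hat{\rho}$ to $\zeta(x)$ gives (\ref{E3}); subtracting the two formulas for $\mu_s$ gives (\ref{E4}); and (\ref{E5}) follows from $\hat{T}(\eta(v)) = S\eta(v)$ together with $(\mathfrak{s}_1 - \mathfrak{s}_2)T(v) = \zeta T(v)$, using $\hat{T}|_M = S$.

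The two genuinely bilinear identities (\ref{E1}) and (\ref{E6}) are where the real (but still routine) work lies. For (\ref{E1}) I would expand $[\mathfrak{s}_2(x) + \zeta(x), \mathfrak{s}_2(y) + \zeta(y)]_{\hat{A}}$ and recognise the cross terms via $[\mathfrak{s}_2(x), \zeta(y)]_{\hat{A}} = \rho^2_B(x)\zeta(y)$, $[\zeta(x), \mathfrak{s}_2(y)]_{\hat{A}} = -\rho^2_B(y)\zeta(x)$, and $[\zeta(x), \zeta(y)]_{\hat{A}} = [\zeta(x), \zeta(y)]_B$, while the diagonal difference $(\mathfrak{s}_1 - \mathfrak{s}_2)[x,y]_A$ supplies the $-\zeta([x,y]_A)$ term. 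For (\ref{E6}) I would expand $\hat{\rho}(\mathfrak{s}_2(x) + \zeta(x))(s_2(v) + \eta(v))$ into four pieces and identify them as $\rho^2_M(x)\eta(v)$, then $-\mu^2(v)\zeta(x)$ (via $\mu_s(v)a = -\hat{\rho}(a)s(v)$), then $\nu_M(\zeta(x))\eta(v)$, with the remaining $-\eta(\rho(x)v)$ coming from $s_1(\rho(x)v) - s_2(\rho(x)v)$.

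The main obstacle, such as it is, is bookkeeping the ambient space of each term: one must check that every cross term genuinely lands in $B$ or $M$ so that $[\,\cdot\,,\,\cdot\,]_{\hat{A}}$ and $\hat{\rho}$ restrict to $[\,\cdot\,,\,\cdot\,]_B$ and $\nu_M$ as used above. The only nontrivial such point is that $\hat{\rho}(a)s(v) \in M$ for $a \in B$, which follows from the compatibility (\ref{Irp2}) for the projection homomorphism $(\mathfrak{p}, p)$, since $p(\hat{\rho}(a)s(v)) = \rho(\mathfrak{p}(a))v = 0$ because $\mathfrak{p}(a) = 0$. Once these identifications are in place, matching the expanded expressions against (\ref{E1})--(\ref{E6}) with the primed data taken to be the cocycle of the section $(\mathfrak{s}_2, s_2)$ is immediate, and this establishes the desired equivalence.
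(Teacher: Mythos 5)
Your proposal is correct and follows essentially the same route as the paper: define $\zeta = \mathfrak{s}_1 - \mathfrak{s}_2$ and $\eta = s_1 - s_2$, note they land in $B$ and $M$ since they are killed by $\mathfrak{p}$ and $p$, and verify Eqs.~(\ref{E1})--(\ref{E6}) by substituting into the defining formulas (\ref{C1})--(\ref{C3}) (the paper writes out the computation for (\ref{E6}) and leaves the rest as analogous). Your extra remark that the cross terms such as $\hat{\rho}(a)s(v)$ genuinely lie in $M$ is a point the paper passes over silently, but it does not change the argument.
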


\begin{proof}
 Let $(\mathfrak{s}_1,s_1)$ and $(\mathfrak{s}_2,s_2)$ be two
distinct sections of $(\mathfrak{p},p)$ and $(\omega^i,\varpi^i,\chi^i,\mu^i,\rho^{i}_{B},\rho^{i}_M)$
be the corresponding non-abelian 2-cocycle induced by the section $(\mathfrak{s}_i,s_i)$~(i=1,2). Define linear maps $\zeta:
A\longrightarrow B,~\eta:V\longrightarrow M$ respectively by $\zeta(x)=\mathfrak{s}_1(x)-\mathfrak{s}_2(x),~\eta(v)=s_1(v)-s_2(v)$. Since
$\mathfrak{p}\zeta(x)=\mathfrak{p}\mathfrak{s}_1(x)-\mathfrak{p}\mathfrak{s}_2(x)=0,~p\eta(v)=ps_{1}(v)-ps_{2}(v)=0$,
 $\zeta,\eta$ are well defined. By
Eqs.~(\ref{C1})-(\ref{C3}), we have
\begin{align*}&\varpi^{1}(x,v)=\hat{\rho}(\mathfrak{s}_1(x))s_1(v)-s_1(\rho(x)v)\\=&
\hat{\rho}(\mathfrak{s}_1(x))s_2(v)+\hat{\rho}(\mathfrak{s}_1(x))\eta(v)-s_2(\rho(x)v)
-\eta(\rho(x)v)
\\=&\hat{\rho}(\mathfrak{s}_2(x))s_2(v)+\hat{\rho}(\zeta(x))s_2(v)+\hat{\rho}(\mathfrak{s}_2(x))\eta(v)
+\hat{\rho}(\zeta(x))\eta(v)-s_2(\rho(x)v)-\eta(\rho(x)v)
\\=&\varpi^{2}(x,v)-\mu^{2}(v)\zeta(x)+\rho^{2}_{M}(x)\eta(v)+\nu_{M}(\zeta(x))\eta(v)-\eta(\rho(x)v),\end{align*}
 which implies that
 Eq.~(\ref{E6}) holds. By the same token, Eqs.~ (\ref{E1})-(\ref{E5})
 hold. This finishes the proof.
\end{proof}

According to Proposition \ref{CY}, given a non-abelian extension
  \begin{equation*}\mathcal{E}:0\longrightarrow\clB\stackrel{(\mathfrak i,i)}{\longrightarrow}
\clE\stackrel{(\mathfrak p,p)}{\longrightarrow}\clA\longrightarrow0\end{equation*}
 of $((A, [ \ , \ ]_{A}),(V,\rho),T)=\clA$ by $((B, [ \ , \ ]_{B}),(M,\nu_M),S)=\clB$ 
 with a section $(\mathfrak{s},s)$ of $(\mathfrak{p},p)$, we have a non-abelian 2-cocycle
 $(\omega_{\mathfrak{s}},\varpi_{s},\chi_{s},\mu_{s},\rho_{(B,\mathfrak{s})},\rho_{(M,\mathfrak{s})})$
  and a relative Rota-Baxter Lie algebra 
  $((A\oplus B,[  \  ,  \ ]_{\omega_{\mathfrak{s}}}),(V\oplus M,\rho_{\varpi_{(\mathfrak{s},s)}}),T_{\chi_{(\mathfrak{s},s)}})$,
  denote it simply by $\clA\oplus_{(\omega_{\mathfrak{s}},\varpi_{(\mathfrak{s},s)},\chi_{(\mathfrak{s},s)})} \clB$.
 It follows that
\begin{equation*}0\longrightarrow\clB\stackrel{i}{\longrightarrow} 
\clA\oplus_{(\omega_{\mathfrak{s}},\varpi_{(\mathfrak{s},s)},\chi_{(\mathfrak{s},s)})} \clB \stackrel{\pi}{\longrightarrow}\clA\longrightarrow0\end{equation*}
 is a non-abelian extension of $\clA$ by $\clB$. Since any element
$\hat{x}\in \hat{A},\hat{v}\in \hat{V}$ can be written as $\hat{x}=a+\mathfrak s(x),\hat{v}=m+s(v)$ with $a\in B,x\in A,m\in M,v\in V$,
define a pair of linear maps $(\varphi,\phi)$ by
\begin{align*} &\varphi:(\hat{A},[ \ , \ ]_{\hat{A}})\longrightarrow (A\oplus B,[ \ , \ ]_{\omega_s}),~~~~\varphi(\hat{x})=\varphi(a+\mathfrak s(x))=a+x,
\\&\phi:(\hat{V},\hat{\rho})\longrightarrow (V\oplus_ M,\rho_{\varpi_{(\mathfrak{s},s)}}),~~~~\phi(\hat{v})=\phi(m+ s(v))=m+v.\end{align*}
It is easy to check that $(\varphi,\phi)$ is an isomorphism of relative Rota-Baxter Lie algebras such that
the following commutative diagram holds:
 \begin{equation*} \xymatrix{
 0 \ar[r] & \clB\ar@{=}[d] \ar[r]^-{(\mathfrak{i},i)} & \clE\ar[d]_-{(\varphi,\phi)} \ar[r]^-{(\mathfrak{p},p)} & \clA \ar@{=}[d] \ar[r] & 0\\
0 \ar[r] & \clB\ar[r]^-{i} & \clA\oplus_{(\omega_{\mathfrak{s}},\varpi_{(\mathfrak{s},s)},\chi_{(\mathfrak{s},s)})} \clB \ar[r]^-{\pi} & \clA  \ar[r] & 0,}\end{equation*}
 which indicates that the non-abelian extensions $\clE $ and 
 $\clA\oplus_{(\omega_{\mathfrak{s}},\varpi_{(\mathfrak{s},s)},\chi_{(\mathfrak{s},s)})} \clB$ of $\clA $ by
$\clB$ are equivalent. On the other hand, if $(\omega,\varpi,\chi,\mu,\rho_{B},\rho_M)$
 is a non-abelian 2-cocycle on $\clA$ with values in
 $\clB$, there is a relative Rota-Baxter Lie algebra $\clA\oplus_{(\omega,\varpi,\chi)} \clB$, which yields the following
 non-abelian extension of $\clA$ by $\clB$:
 \begin{equation*}0\longrightarrow\clB\stackrel{i}{\longrightarrow}\clA\oplus_{(\omega,\varpi)} \clB
\stackrel{\pi}{\longrightarrow}\clA\longrightarrow0,\end{equation*}
where $i$ is the inclusion and $\pi$ is the projection.

In the following, we focus on the relationship between non-abelian 2-cocycles and non-abelian extensions.

\begin{pro} \label{Ceg}
 Let $((A, [ \ , \ ]_{A}),(V,\rho),T)=\clA$ and $((B, [ \ , \ ]_{B}),(M,\nu_M),S)=\clB$ 
 be two relative Rota-Baxter Lie algebras.
 Then the equivalent classes of non-abelian extensions of  $((A, [ \ , \ ]_{A}),(V,\rho),T)$
  by $((B, [ \ , \ ]_{B}),(M,\nu_M),S)$
are classified by the non-abelian second cohomology group, that is,
 $\mathrm{Ext}_{nab}(\clA,\clB)\simeq \mathcal{H}_{nab}^{2}(\clA,\clB)$.
 \end{pro}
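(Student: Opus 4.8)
The plan is to construct a bijection $\Theta\colon \mathrm{Ext}_{nab}(\clA,\clB)\to \mathcal{H}_{nab}^{2}(\clA,\clB)$ together with an explicit inverse. Given a non-abelian extension $\mathcal{E}$, choose any section $(\mathfrak{s},s)$ of $(\mathfrak{p},p)$; by Proposition \ref{CY} this produces a non-abelian 2-cocycle $(\omega_{\mathfrak{s}},\varpi_{s},\chi_{s},\mu_{s},\rho_{(B,\mathfrak{s})},\rho_{(M,\mathfrak{s})})$, and I set $\Theta([\mathcal{E}])$ to be its class in $\mathcal{H}_{nab}^{2}(\clA,\clB)$. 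Lemma \ref{Le1} already guarantees that this class does not depend on the chosen section, so the only remaining point for well-definedness is independence of the representative $\mathcal{E}$ within its equivalence class.

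To check this, suppose $\clE_1$ and $\clE_2$ are equivalent non-abelian extensions via an isomorphism $(\varphi,\phi)$ as in diagram (\ref{Ene3}). Fix a section $(\mathfrak{s}_1,s_1)$ of $\clE_1$ and push it forward to $(\varphi\mathfrak{s}_1,\phi s_1)$; commutativity of (\ref{Ene3}) gives $\mathfrak{p}_2\varphi=\mathfrak{p}_1$ and $p_2\phi=p_1$, so this is indeed a section of $\clE_2$. Because $\varphi$ and $\phi$ restrict to the identity on $B$ and $M$ and are homomorphisms of relative Rota-Baxter Lie algebras, substituting into Eqs.~(\ref{C1})-(\ref{C3}) shows that the 2-cocycle of $\clE_2$ attached to $(\varphi\mathfrak{s}_1,\phi s_1)$ coincides with the 2-cocycle of $\clE_1$ attached to $(\mathfrak{s}_1,s_1)$. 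Hence $\Theta$ is well defined on equivalence classes.

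For the inverse I would use Proposition \ref{LY}: a non-abelian 2-cocycle $(\omega,\varpi,\chi,\mu,\rho_B,\rho_M)$ yields the relative Rota-Baxter Lie algebra $\clA\oplus_{(\omega,\varpi,\chi)}\clB$, which sits in a non-abelian extension via the canonical inclusion and projection, defining $\Psi\colon \mathcal{H}_{nab}^{2}(\clA,\clB)\to \mathrm{Ext}_{nab}(\clA,\clB)$. That $\Psi$ is well defined amounts to showing equivalent 2-cocycles give equivalent extensions: given the data $\zeta\colon A\to B$, $\eta\colon V\to M$ of Eqs.~(\ref{E1})-(\ref{E6}), I define $(\varphi,\phi)$ between $\clA\oplus_{(\omega,\varpi,\chi)}\clB$ and $\clA\oplus_{(\omega',\varpi',\chi')}\clB$ by $\varphi(x+a)=x+\big(a+\zeta(x)\big)$ and $\phi(v+m)=v+\big(m+\eta(v)\big)$, and verify it is an isomorphism of relative Rota-Baxter Lie algebras compatible with the identities on $\clA$ and $\clB$. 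That $\Theta$ and $\Psi$ are mutually inverse then follows from two facts already in hand: the canonical section of $\clA\oplus_{(\omega,\varpi,\chi)}\clB$ recovers precisely $(\omega,\varpi,\chi,\mu,\rho_B,\rho_M)$ when fed into Eqs.~(\ref{C1})-(\ref{C3}), giving $\Theta\Psi=\mathrm{id}$, while the paragraph preceding this proposition shows every extension $\mathcal{E}$ is equivalent to $\clA\oplus_{(\omega_{\mathfrak{s}},\varpi_{(\mathfrak{s},s)},\chi_{(\mathfrak{s},s)})}\clB$, giving $\Psi\Theta=\mathrm{id}$.

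The main obstacle is the verification that the map $(\varphi,\phi)$ just constructed is genuinely an isomorphism of relative Rota-Baxter Lie algebras. Concretely one must confirm that $\varphi$ respects the twisted brackets $[\ ,\ ]_{\omega}$ and $[\ ,\ ]_{\omega'}$ of (\ref{NLts0}) (this uses Eqs.~(\ref{E1}) and (\ref{E2})), that $(\varphi,\phi)$ intertwines the representations $\rho_{\varpi}$ and $\rho_{\varpi'}$ of (\ref{NLts1}) (Eqs.~(\ref{E3}), (\ref{E4}) and (\ref{E6})), and that it commutes with the operators $T_{\chi}$ and $T_{\chi'}$ of (\ref{NLts2}) (Eq.~(\ref{E5})). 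Each is a direct but lengthy computation unwinding definitions (\ref{NLts0})-(\ref{NLts2}); the six equivalence equations are precisely what is needed to make these three compatibilities hold, so once they are checked the proof is complete.
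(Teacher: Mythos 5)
Your proposal is correct and follows essentially the same route as the paper: both assign to an extension the cohomology class of the 2-cocycle of a section (with Lemma \ref{Le1} handling section-independence and the pushed-forward section $(\varphi\mathfrak{s}_1,\phi s_1)$ handling independence of the representative extension), and both use the maps $\varphi(x+a)=x+\zeta(x)+a$, $\phi(v+m)=v+\eta(v)+m$ to turn an equivalence of cocycles into an equivalence of the split extensions $\clA\oplus_{(\omega,\varpi,\chi)}\clB$. The only difference is presentational -- you package injectivity and surjectivity as an explicit two-sided inverse $\Psi$ -- and you correctly identify which of Eqs.~(\ref{E1})--(\ref{E6}) are needed for each compatibility of $(\varphi,\phi)$.
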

 
\begin{proof}
 Define a linear map
 \begin{equation*}\Theta:\mathrm{Ext}_{nab}(\clA,\clB)\rightarrow H_{nab}^{2}(\clA,\clB),~\end{equation*}
where $\Theta$ assigns an equivalent class of non-abelian extensions to the class of corresponding non-abelian 2-cocycles.
First, we check that $\Theta$ is well-defined.
 Assume that $((\hat{A}_1, [ \ , \ ]_{\hat{A}_1}),(\hat{V}_1,\hat{\rho}_{1}),\hat{T}_1)$ and
$((\hat{A}_2, [ \ , \ ]_{\hat{A}_2}),(\hat{V}_2,\hat{\rho}_{2}),\hat{T}_2)$
are two non-abelian extensions of $\clA$ by $\clB$, which are equivalent 
 via the map
 $(\varphi,\phi)$, that is, the commutative diagram (\ref{Ene2}) holds. Thus,
\begin{equation}\label{Erp1}\phi\hat{\rho}_1(v)=\hat{\rho}_2(\varphi(x))\phi(v),~\hat{T}_2\varphi=\phi \hat{T}_1.\end{equation}
 Let $(\mathfrak{s}_1,s_1)$ be a
section of $(\mathfrak{p}_1,p_1)$ with $\mathfrak{s}_1:A\rightarrow \hat{A}_1,~s_1:V \rightarrow \hat{V}_1.$
Then $p_2\phi s_1=p_1s_1=I_{V},\mathfrak{p}_2\varphi \mathfrak{s}_1=\mathfrak{p}_1\mathfrak{s}_1=I_{A}$, which
follows that $(\varphi \mathfrak{s}_1,\phi s_1)$ is a section of $(\mathfrak{p}_2,p_2)$. Let
 $(\omega^1,\varpi^1,\chi^1,\mu^1,\rho^{1}_{B},\rho^{1}_M)$ and
  $(\omega^2,\varpi^2,\chi^2,\mu^2,\rho^{2}_{B},\rho^{2}_M)$ be the two non-abelian 2-cocycles induced
by the sections $(\mathfrak{s}_1,s_1),(\mathfrak{s}_2,s_2)$ respectively. 
In view of Eqs.~(\ref{C1}) and (\ref{Erp1}), we obtain
\begin{align*}\varpi^{2}(x,v)&=\hat{\rho}_2(\mathfrak{s}_2(x))s_1(v)-s_2(\rho(x)v)\\&=
\hat{\rho}_2(\varphi\mathfrak{s}_1(x))\phi s_1(v)-\phi s_1(\rho(x)v)
\\&=\phi\hat{\rho}_1(\mathfrak{s}_1(x)) s_1(v)-\phi s_1(\rho(x)v)
\\&=\phi(\hat{\rho}_1(\mathfrak{s}_1(x)) s_1(v)- s_1(\rho(x)v))
\\&=\phi\varpi^{1}(x,v)
\\&=\varpi^{1}(x,v).\end{align*}
 By the same token, we have
\begin{equation*}\omega^1(x,y)=\omega^2(x,y),\chi^1(v)=\chi^2(v),\mu^1(v)=\mu^2(v),
\rho^{1}_{B}(x)=\rho^{2}_{B}(x),\rho^{1}_{M}(x)=\rho^{2}_{M}(x).\end{equation*}
 Therefore,
$(\omega^1,\varpi^1,\chi^1,\mu^1,\rho^{1}_{B},\rho^{1}_M)=(\omega^2,\varpi^2,\chi^2,\mu^2,\rho^{2}_{B},\rho^{2}_M)$,
which means that $\Theta$ is well-defined.

 Next, we verify that $\Theta$ is injective. Indeed, suppose that
 $\Theta([\mathcal{E}_1])=[(\omega^1,\varpi^1,\chi^1,\mu^1,\rho^{1}_{B},\rho^{1}_M)]$ 
 and $\Theta([\mathcal{E}_2])=[(\omega^2,\varpi^2,\chi^2,\mu^2,\rho^{2}_{B},\rho^{2}_M)]$. If
the equivalent classes 
$[(\omega^1,\varpi^1,\chi^1,\mu^1,\rho^{1}_{B},\rho^{1}_M)]=[(\omega^2,\varpi^2,\chi^2,\mu^2,\rho^{2}_{B},\rho^{2}_M)]$, 
we get that the non-abelian 2-cocycles
 $(\omega^1,\varpi^1,\chi^1,\mu^1,\rho^{1}_{B},\rho^{1}_M)$ and
$(\omega^2,\varpi^2,\chi^2,\mu^2,\rho^{2}_{B},\rho^{2}_M)$ are equivalent
via the linear maps $\zeta:A\longrightarrow
B$ and $\eta:V\longrightarrow M$, satisfying Eqs.~~(\ref{E1})-(\ref{E6}). Define linear maps
$\varphi:(A\oplus B,[ \ , \ ]_{\omega_1})\longrightarrow (A\oplus B,[ \ , \ ]_{\omega_2})$ and 
$\phi:(V\oplus M,\rho_{\varpi_1})\longrightarrow (V\oplus M,\rho_{\varpi_2})$ respectively by
\begin{equation*}\varphi(x+a)=x+\zeta(x)+a,~~\forall~x\in A,a\in B,\end{equation*}
\begin{equation*}\phi(v+m)=v+\eta(v)+m,~~\forall~v\in V,m\in M.\end{equation*}
Similarly to the proof of the converse part of Theorem \ref{EC}, we can show that 
$(\varphi,\phi)$ is an isomorphism of relative Rota-Baxter Lie algebras. Moreover,
 the following commutative diagram holds:
\begin{equation}
\xymatrix{
 \mathcal{E}_{(\omega^{1},\varpi^{1},\chi^{1})}: 0 \ar[r] & \clB\ar@{=}[d] \ar[r]^-{i} & \clA\oplus_{(\omega^{1},\varpi^{1},\chi^{1})} \clB \ar[d]_-{(\varphi,\phi)} \ar[r]^-{\pi} & \clA \ar@{=}[d] \ar[r] & 0\\
\mathcal{E}_{(\omega_2,\varpi_2,\chi_2)}: 0 \ar[r] & \clB \ar[r]^-{i} & \clA\oplus_{(\omega^{2},\varpi^{2},\chi^{2})} \clB \ar[r]^-{\pi} & \clA  \ar[r] & 0
 .}\end{equation}
Thus $\mathcal{E}_{(\omega^{1},\varpi^{1},\chi^{1})}$ and $\mathcal{E}_{(\omega^{2},\varpi^{2},\chi^{2})}$ are equivalent
non-abelian extensions of  $\clA$ by $\clB$,
which means that $[\mathcal{E}_{(\omega^{1},\varpi^{1},\chi^{1})}]=[\mathcal{E}_{(\omega^{2},\varpi^{2},\chi^{2})}]$. Thus, $\Theta$ is injective.

 Finally, we claim that $\Theta$ is surjective.
 For any equivalent class of non-abelian 2-cocycles $[(\omega,\varpi,\chi,\mu,\rho_{B},\rho_M)]$, by Proposition \ref{LY}, there is
 a non-abelian extension of  $\clA$ by $\clB$:
   \begin{equation*}\mathcal{E}_{(\omega,\varpi,\chi)}:0\longrightarrow\clB\stackrel{i}{\longrightarrow} \clA\oplus_{(\omega,\varpi,\chi)} \clB \stackrel{\pi}{\longrightarrow}\clA\longrightarrow0.\end{equation*}
   Therefore, $\Theta([\mathcal{E}_{(\omega,\varpi,\chi)}])=[(\omega,\varpi,\chi,\mu,\rho_{B},\rho_M)]$, which follows that $\Theta$ is surjective.
   In all, $\Theta$ is bijective. This finishes the proof.
 \end{proof}
 
Analogously to proposition \ref{Ceg},

\begin{thm}[\cite{35}]
	 Abelian extensions of $((A, [ \ , \ ]_{A}),(V,\rho),T)=\clA$
 by $((B, [ \ , \ ]_{B}),(M,\nu_M),S)=\clB$ are classified
by the cohomology group $\mathcal{H}^{2}(\clA,\clB)$ of
$\clA$ with coefficients in $\clB$.
\end{thm}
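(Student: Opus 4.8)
The plan is to imitate the proof of Proposition \ref{Ceg}, specialising every non-abelian datum to its abelian shadow. The essential simplification is recorded in Remark \ref{Rk1}: once $\clB$ is abelian, the structure maps $(\mu,\rho_B,\rho_M)$ attached to a non-abelian $2$-cocycle are forced by Eqs.~(\ref{L2}), (\ref{L5}), (\ref{L6}), (\ref{L8}) to assemble into a genuine representation $(\rho_B,\rho_M,\mu)$ of $\clA$ on $M\xrightarrow{S}B$, and the triple $(\omega,\varpi,\chi)$ becomes an ordinary $2$-cocycle in $\mathcal{Z}^2(\clA,\clB)$; moreover $\mathcal{H}_{nab}^2(\clA,\clB)=\mathcal{H}^2(\clA,\clB)$. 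I would begin by making this identification explicit, noting in particular that in the equivalence relation (\ref{E1})--(\ref{E6}) the vanishing of $[\,\cdot\,,\cdot\,]_B$ and of $\nu_M$ collapses Eqs.~(\ref{E2})--(\ref{E4}) to $\rho_B=\rho'_B$, $\rho_M=\rho'_M$, $\mu=\mu'$, so that equivalent cocycles carry the same representation, while Eqs.~(\ref{E1}), (\ref{E5}), (\ref{E6}) reduce exactly to the coboundary relations defining $\mathcal{B}^2(\clA,\clB)$.

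With this dictionary in hand, I would define
\[
\Theta:\mathrm{Ext}(\clA,\clB)\longrightarrow \mathcal{H}^2(\clA,\clB)
\]
by sending the class of an abelian extension $\mathcal{E}$ to the class of the $2$-cocycle $(\omega_{\mathfrak{s}},\varpi_s,\chi_s)$ attached to a section $(\mathfrak{s},s)$ as in (\ref{C1})--(\ref{C3}); this cocycle lies in $\mathcal{Z}^2(\clA,\clB)$ by the remark following Proposition \ref{CY}. Well-definedness splits into two checks, both inherited from the non-abelian theory: independence of the chosen section is Lemma \ref{Le1}, whose equivalences (\ref{E1})--(\ref{E6}) are now precisely the abelian coboundary relations; and independence of the representative extension is the well-definedness argument of Proposition \ref{Ceg}, which shows that an equivalence $(\varphi,\phi)$ carries a section of one extension to a section of the other inducing the very same cocycle.

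Injectivity and surjectivity then transcribe the corresponding parts of Proposition \ref{Ceg}. For injectivity, if two abelian extensions yield cohomologous cocycles, the coboundary data $\zeta:A\to B$, $\eta:V\to M$ produce the maps $\varphi(x+a)=x+\zeta(x)+a$ and $\phi(v+m)=v+\eta(v)+m$, which one checks, using Proposition \ref{LY} together with the abelian forms of (\ref{E1}), (\ref{E5}), (\ref{E6}), to constitute an isomorphism of relative Rota-Baxter Lie algebras fitting into the equivalence diagram (\ref{Ene3}). For surjectivity, any $2$-cocycle $(\omega,\varpi,\chi)$ is, via Proposition \ref{LY} applied to the abelian $\clB$, a non-abelian $2$-cocycle and hence gives the extension $\clA\oplus_{(\omega,\varpi,\chi)}\clB$, whose associated class is the prescribed one. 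Since $\Theta$ is then a well-defined bijection, we conclude $\mathrm{Ext}(\clA,\clB)\simeq\mathcal{H}^2(\clA,\clB)$.

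I expect the only genuine subtlety, as opposed to routine transcription, to be the bookkeeping of the \emph{fixed} representation: the group $\mathcal{H}^2(\clA,\clB)$ is computed with respect to a prescribed action $(\rho_B,\rho_M,\mu)$ of $\clA$ on $M\xrightarrow{S}B$, so one must confirm that all extensions in play induce this same action. This is exactly what the collapse of (\ref{E2})--(\ref{E4}) to $\rho_B=\rho'_B$, $\rho_M=\rho'_M$, $\mu=\mu'$ guarantees, and it is the point at which the abelian hypothesis on $\clB$ enters in an essential rather than merely cosmetic way.
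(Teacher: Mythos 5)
Your proposal is correct and follows exactly the route the paper intends: the paper states this theorem with no written proof beyond the phrase ``Analogously to Proposition \ref{Ceg}'' and a citation to \cite{35}, and your argument is precisely that analogy carried out — using Remark \ref{Rk1} to identify non-abelian $2$-cocycles with ordinary ones when $\clB$ is abelian, observing that Eqs.~(\ref{E1})--(\ref{E6}) collapse to the coboundary relations, and transcribing the well-definedness, injectivity and surjectivity arguments of Proposition \ref{Ceg}. Your closing remark about the fixed representation being forced by the collapse of (\ref{E2})--(\ref{E4}) is an accurate and worthwhile point that the paper leaves implicit.
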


\section{Inducibility of a pair of relative Rota-Baxter Lie algebra automorphisms}
In this section, we study the inducibility of pairs of relative Rota-Baxter Lie algebra
automorphisms and characterize them by equivalent conditions.

Assume that $((A, [ \ , \ ]_{A}),(V,\rho),T)=\clA$ and $((B, [ \ , \ ]_{B}),(M,\nu_M),S)=\clB$ are
two relative Rota-Baxter Lie algebras. Let 
  \begin{equation*}\mathcal{E}:0\longrightarrow\clB\stackrel{(\mathfrak i,i)}{\longrightarrow}
\clE\stackrel{(\mathfrak p,p)}{\longrightarrow}\clA\longrightarrow0\end{equation*}
be a non-abelian extension of $\clA$ by $\clB$
   with a section $(\mathfrak{s},s)$ of $(\mathfrak{p},p)$ and
 $(\omega,\varpi,\chi,\mu,\rho_{B},\rho_{M})$ be the corresponding non-abelian 2-cocycle induced by 
$(\mathfrak{s},s)$, where $\clE=((\hat{A}, [ \ , \ ]_{\hat{A}}),(\hat{V},\hat{\rho}),\hat{T})$.

 Denote
$$\mathrm{Aut}_{\clB}
(\clE)=\{\gamma=(\gamma_1,\gamma_2)\in \mathrm{Aut} (\clE)\mid \gamma(\clB)\subseteq \clB, that~~ is,
\gamma_1
(B)\subseteq B,\gamma_2(M)\subseteq M\}.$$ 
\begin{pro} \label{Idc} The map $K:\mathrm{Aut}_{\clB}
(\clE)\longrightarrow \mathrm{Aut} (\clA)\times \mathrm{Aut} (\clB)$
given by
\begin{equation}\label{Ik}K(\gamma)=(\bar{\gamma},\gamma|_{\clB})~~ with~~\bar{\gamma}=(\bar{\gamma}_1,\bar{\gamma}_2)=(\mathfrak p\gamma_1 \mathfrak s,p\gamma_2 s)
,~~\gamma|_{\clB}=(\gamma_{1}|_{B},\gamma_{2}|_{M})
\end{equation}
for all $\gamma=(\gamma_1,\gamma_2)\in \mathrm{Aut}_{\clB}(\clE)$ is well defined. Moreover, $K$ is a homomorphism of groups.
\end{pro}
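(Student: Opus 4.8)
The plan is to verify that each of the two components of $K$ takes values in the correct group and then that $K$ is multiplicative. Throughout I identify $B$ with $\mathfrak{i}(B)\subseteq\hat{A}$ and $M$ with $i(M)\subseteq\hat{V}$, so that commutativity of (\ref{Ene1}) reads $\hat{T}|_{M}=S$ and $\mathfrak{p}\hat{T}=Tp$, while the facts that $(\mathfrak{i},i)$ and $(\mathfrak{p},p)$ are morphisms of relative Rota-Baxter Lie algebras give, via (\ref{Irp2}), that $\hat{\rho}(a)|_{M}=\nu_{M}(a)$ for $a\in B$ and $p\hat{\rho}(\hat{x})(\hat{v})=\rho(\mathfrak{p}\hat{x})(p\hat{v})$ for all $\hat{x}\in\hat{A},\hat{v}\in\hat{V}$. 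I also record that by (\ref{C1})--(\ref{C2}) the terms $\omega(x,y)=[\mathfrak{s}x,\mathfrak{s}y]_{\hat{A}}-\mathfrak{s}[x,y]_{A}$, $\varpi(x,v)=\hat{\rho}(\mathfrak{s}x)s(v)-s(\rho(x)v)$ and $\chi(v)=\hat{T}(sv)-\mathfrak{s}(Tv)$ all lie in $\clB$ (apply $\mathfrak{p}$, resp.\ $p$, and use $\mathfrak{p}\mathfrak{s}=\mathrm{Id}_{A}$, $ps=\mathrm{Id}_{V}$).

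First I would show that $\bar{\gamma}=(\mathfrak{p}\gamma_1\mathfrak{s},\,p\gamma_2 s)$ is a well-defined endomorphism of $\clA$, independent of the section. If $(\mathfrak{s}',s')$ is another section then $\mathfrak{s}(x)-\mathfrak{s}'(x)\in B$, and since $\gamma_1(B)\subseteq B$ and $\mathfrak{p}|_{B}=0$ we get $\mathfrak{p}\gamma_1\mathfrak{s}=\mathfrak{p}\gamma_1\mathfrak{s}'$, and similarly in the $V$-component; thus $\bar{\gamma}$ is precisely the map induced by $\gamma$ on the quotients $\hat{A}/B\cong A$, $\hat{V}/M\cong V$. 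That $\bar{\gamma}_1$ is a Lie homomorphism follows by writing $\mathfrak{s}[x,y]_{A}=[\mathfrak{s}x,\mathfrak{s}y]_{\hat{A}}-\omega(x,y)$, applying $\mathfrak{p}\gamma_1$, and noting $\mathfrak{p}\gamma_1\omega(x,y)=0$ while $\mathfrak{p}[\gamma_1\mathfrak{s}x,\gamma_1\mathfrak{s}y]_{\hat{A}}=[\bar{\gamma}_1x,\bar{\gamma}_1y]_{A}$. Condition (\ref{Irp1}) for $\bar{\gamma}$, namely $T\bar{\gamma}_2=\bar{\gamma}_1T$, comes from $Tp=\mathfrak{p}\hat{T}$, $\hat{T}\gamma_2=\gamma_1\hat{T}$ and $\hat{T}s(v)=\mathfrak{s}(Tv)+\chi(v)$ with $\chi(v)\in B$; condition (\ref{Irp2}), namely $\bar{\gamma}_2\rho(x)v=\rho(\bar{\gamma}_1x)\bar{\gamma}_2v$, comes from $s(\rho(x)v)=\hat{\rho}(\mathfrak{s}x)s(v)-\varpi(x,v)$ with $\varpi(x,v)\in M$, the intertwining $\gamma_2\hat{\rho}(\hat{x})=\hat{\rho}(\gamma_1\hat{x})\gamma_2$, and the morphism identity $p\hat{\rho}=\rho\,\mathfrak{p}$. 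In each case the correction term lies in $\clB$ and is annihilated by the projection.

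Next I would check that $\gamma|_{\clB}=(\gamma_1|_{B},\gamma_2|_{M})$ is an endomorphism of $\clB$. Since $B$ is a subalgebra of $\hat{A}$ preserved by $\gamma_1$, the restriction $\gamma_1|_{B}$ is a Lie homomorphism. Restricting $\hat{T}\gamma_2=\gamma_1\hat{T}$ to $M$ and using $\hat{T}|_{M}=S$ together with $\gamma_2(M)\subseteq M$ yields $S(\gamma_2|_{M})=(\gamma_1|_{B})S$, which is (\ref{Irp1}) for $\clB$; restricting the intertwining identity to $a\in B$, $m\in M$ and using $\hat{\rho}(a)|_{M}=\nu_{M}(a)$ yields $(\gamma_2|_{M})\nu_{M}(a)=\nu_{M}((\gamma_1|_{B})a)(\gamma_2|_{M})$, which is (\ref{Irp2}) for $\clB$.

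It remains to settle bijectivity and multiplicativity. Because $\mathrm{Aut}_{\clB}(\clE)$ is a group, $\gamma^{-1}$ also preserves $\clB$, so $\gamma_1^{-1}(B)\subseteq B$; together with $\gamma_1(B)\subseteq B$ this forces $\gamma_1(B)=B$, and likewise $\gamma_2(M)=M$, whence $\gamma_1|_{B}$, $\gamma_2|_{M}$ and the induced maps $\bar{\gamma}_1$, $\bar{\gamma}_2$ are bijective. For multiplicativity, $(\gamma\gamma')|_{\clB}=(\gamma|_{\clB})(\gamma'|_{\clB})$ is immediate, while $\overline{\gamma\gamma'}=\bar{\gamma}\,\bar{\gamma}'$ follows from functoriality of the passage to quotients: one writes $\gamma_1'\mathfrak{s}(x)=\mathfrak{s}\mathfrak{p}\gamma_1'\mathfrak{s}(x)+b$ with $b\in B$ (the difference being killed by $\mathfrak{p}$), applies $\mathfrak{p}\gamma_1$, and uses $\gamma_1(b)\in B$; the $V$-component is identical. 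Hence $K(\gamma\gamma')=K(\gamma)K(\gamma')$ and $K(\mathrm{Id})=(\mathrm{Id},\mathrm{Id})$, so $K$ is a group homomorphism. I expect the main obstacle to be the content of the second paragraph: checking that every relative Rota-Baxter compatibility for $\bar{\gamma}$ survives the passage to the quotient, which rests precisely on the cocycle correction terms $\omega,\varpi,\chi$ lying in $\clB$ and on $\gamma$ preserving $\clB$.
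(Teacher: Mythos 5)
Your proof is correct and follows essentially the same route as the paper's: push the relative Rota--Baxter compatibilities of $\gamma$ down to the quotient by observing that the cocycle correction terms $\omega,\varpi,\chi$ lie in $\clB$ and are killed by $(\mathfrak p,p)$ since $\gamma$ preserves $\clB$. You are merely more self-contained where the paper delegates (it cites the Lie-algebra case of \cite{028} for $\bar{\gamma}_1$ and leaves multiplicativity as ``easy to check''), so no substantive difference to report.
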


\begin{proof}
 According to the case of Lie algebras \cite{028}, $\bar{\gamma}_1$ 
is independent on the choice of sections and $\bar{\gamma}_1$ is an isomorphism of the Lie algebras $A$.
 Taking the same procedure, we can prove that $\bar{\gamma}_2$
does not depend on the choice of sections and $\bar{\gamma}_2$ is bijective. 
Thus, $\bar{\gamma}$ is independent on the choice of sections.
Thanks to $p(M)=0,\mathfrak p(B)=0$, we get $p\gamma_2(M)=0,\mathfrak p\gamma_1(B)=0$.
Combining Eqs.~(\ref{Irp2}) and (\ref{C1}), for all $x,y\in A$, we have 
\begin{align*}p\gamma_2 s(\rho(x)v)&=p\gamma_2 (\hat{\rho}(\mathfrak s(x))s(v)-\varpi(x,v))
\\&=p\gamma_2 \hat{\rho}(\mathfrak s(x))s(v)
\\&=p (\hat{\rho}(\gamma_1\mathfrak s(x))\gamma_2 s(v))
\\&=\rho(\mathfrak p\gamma_1\mathfrak s(x))p\gamma_2 s(v)
,\end{align*}
and using Eqs.~(\ref{Irp1}) and (\ref{C2}), we get
\begin{align*}&Tp\gamma_2 s(v)-\mathfrak p\gamma_1 \mathfrak sT(v)
\\=&\mathfrak p \hat{T}\gamma_2 s(v)-\mathfrak p\gamma_1 \mathfrak sT(v)
\\=&\mathfrak p  (\hat{T}\gamma_2 s-\gamma_1 \mathfrak sT)(v)
\\=&\mathfrak p  (\gamma_1 \hat{T}s-\gamma_1 \mathfrak sT)(v)
\\=&\mathfrak p \gamma_1\chi(v)
\\=&0
,\end{align*}
which indicates that $T\bar{\gamma}_2=\bar{\gamma}_1 T$ and $\bar{\gamma}_2 (\rho(x)v)=\rho(\bar{\gamma}_1(x))\bar{\gamma}_2 (v)$.
Thus, $\bar{\gamma}=(\bar{\gamma}_1,\bar{\gamma}_2)$ is an isomorphism of the relative Rota-Baxter Lie algebras.
It is easy to check that  $K$ is a homomorphism of groups.
\end{proof}

Since all elements of $\hat{A}$ and $\hat{V}$ can be written as $\mathfrak s(x)+a$ and $s(v)+m$ respectively with $x\in A,v\in V,a\in B,m\in M$,
 combining $\mathrm{Ker}\mathfrak{p}\cong B$ and $\mathrm{Ker}p\cong M$, we obtain that
\begin{equation*}\mathfrak p\gamma_1 \mathfrak s \mathfrak p(s(x)+a)=\mathfrak p\gamma_1 \mathfrak s (x)=\mathfrak p\gamma_1 (\mathfrak s (x)+a),\end{equation*}
\begin{equation*} p\gamma_2 s  p(s(v)+m)= p\gamma_2  s (v)= p\gamma_2 ( s (v)+m),\end{equation*}
which means that $\bar{\gamma}(\mathfrak p,p)=(\mathfrak p,p)\gamma$. 
It is clear that $\gamma(\mathfrak i,i)=(\mathfrak i,i)\gamma|_{\clB}$. Conversely, if 
$\bar{\gamma}(\mathfrak p,p)=(\mathfrak p,p)\gamma$, then 
$\bar{\gamma}=(\mathfrak p,p)\gamma(\mathfrak s,s)$. 
Thus, $(\alpha,\beta)$ is an image of $K$ if and only if there exists $\gamma\in \mathrm{Aut}_{\clE}$
such that $\alpha (\mathfrak p,p)=(\mathfrak p,p)\gamma$ and $\gamma (\mathfrak i,i)=(\mathfrak i,i)\beta$.

A pair $(\alpha,\beta)\in \mathrm{Aut} (\clA)\times \mathrm{Aut}(\clB)$ is said to
  be inducible (extensible) if $(\alpha,\beta)$ is an image of $K$.

It is natural to ask: when a pair $(\alpha,\beta)$ is
inducible. We discuss this theme in the following.

\begin{thm} \label{EC} Assume that $((A, [ \ , \ ]_{A}),(V,\rho),T)=\clA$ and $((B, [ \ , \ ]_{B}),(M,\nu_M),S)=\clB$
 are two relative Rota-Baxter Lie algebras.
Let $0\longrightarrow\clB\stackrel{(\mathfrak i,i)}{\longrightarrow}
\clE\stackrel{(\mathfrak p,p)}{\longrightarrow}\clA
\longrightarrow0$ be a non-abelian extension of $\clA$ by
$\clB$ with a section $(\mathfrak s,s)$ of $(\mathfrak p,p)$ and 
$(\omega,\varpi,\chi,\mu,\rho_{B},\rho_M)$ be the corresponding non-abelian 2-cocycle
induced by $(\mathfrak s,s)$, where $\clE=((\hat{A}, [ \ , \ ]_{\hat{A}}),(\hat{V},\hat{\rho}),\hat{T})$.
 A pair $(\alpha,\beta)\in \mathrm{Aut}(\clA
)\times \mathrm{Aut}(\clB)$ is inducible if and only if there are
linear maps $\zeta:A\longrightarrow B$ and $\eta:V\longrightarrow M$
satisfying the following conditions:
\begin{align}\label{Iam1}\beta_1\omega(x,y)-\omega(\alpha_1(x),\alpha_1(y))=\rho_{B}(\alpha_1(x))\zeta(y)-
     \rho_{B}(\alpha_1(y))\zeta(x)-\zeta([x,y]_A)+[\zeta(x),\zeta(y)]_B,
\end{align}
\begin{equation}\label{Iam2}
     \beta_1 (\rho_{B}(x)a)-\rho_{B}(\alpha_1(x))\beta_1 (a)=[\zeta(x),\beta_1 (a)]_{B},
\end{equation}
\begin{equation}\label{Iam3}
     \beta_2 (\rho_{M}(x)m)-\rho_{M}(\alpha_1(x))\beta_2 (m)=\nu_{M}(\zeta(x))\beta_{2}(m),
\end{equation}
\begin{equation}\label{Iam4}
      \beta_2 (\mu(v)a)-\mu(\alpha_2(v))\beta_1 (a)=-\nu_{M}(\beta_1 (a))\eta(v),
\end{equation}
\begin{equation}\label{Iam5}
     \beta_2\varpi(x,v)-\varpi(\alpha_1(x),\alpha_2(v))=\nu_{M}(\zeta(x))\eta(v)-\mu(\alpha_2(v))\zeta(x)+\rho_{M}(\alpha_1(x))\eta (v)-\eta (\rho(x)v),
\end{equation}
\begin{equation}\label{Iam6}
     \beta_1 \chi(v)-\chi(\alpha_2(v))=S\eta(v)-\zeta T(v).
\end{equation}
for all $x,y\in A,a\in B,v\in V$ and $m\in M$, where $\alpha=(\alpha_1,\alpha_2)$ and $\beta=(\beta_1,\beta_2)$.
\end{thm}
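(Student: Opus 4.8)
The plan is to work through the reformulation of inducibility obtained just before the statement: a pair $(\alpha,\beta)$ lies in the image of $K$ if and only if there exists $\gamma=(\gamma_1,\gamma_2)\in\mathrm{Aut}_{\clB}(\clE)$ with $\bar\gamma=\alpha$ and $\gamma|_{\clB}=\beta$, i.e.\ $\alpha(\mathfrak p,p)=(\mathfrak p,p)\gamma$ and $\gamma(\mathfrak i,i)=(\mathfrak i,i)\beta$. Throughout I identify $B$ and $M$ with their images under $(\mathfrak i,i)$ and use that every element of $\hat A$ is uniquely of the form $\mathfrak s(x)+a$ and every element of $\hat V$ uniquely of the form $s(v)+m$. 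The argument then splits into the two directions, in both of which the bridge between the extension data and the six conditions is the defining set \eqref{C1}--\eqref{C3} of the corresponding non-abelian $2$-cocycle.

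For the forward direction, I would suppose $(\alpha,\beta)$ is inducible via such a $\gamma$. Since $\mathfrak p\gamma_1\mathfrak s=\alpha_1$ and $p\gamma_2 s=\alpha_2$, the differences $\gamma_1\mathfrak s(x)-\mathfrak s\alpha_1(x)$ and $\gamma_2 s(v)-s\alpha_2(v)$ lie in $\ker\mathfrak p=B$ and $\ker p=M$, so I define $\zeta(x):=\gamma_1\mathfrak s(x)-\mathfrak s\alpha_1(x)$ and $\eta(v):=\gamma_2 s(v)-s\alpha_2(v)$. Each of \eqref{Iam1}--\eqref{Iam6} is then produced by applying $\gamma_1$ (resp.\ $\gamma_2$) to the relevant datum in \eqref{C1}--\eqref{C3}, expanding $\gamma_i\mathfrak s$ and $\gamma_i s$ through $\zeta,\eta$, and using that $\gamma$ is a morphism of relative Rota-Baxter Lie algebras restricting to $\beta$ on $\clB$ (so it is multiplicative for $[\ ,\ ]_{\hat A}$, intertwines $\hat\rho$ as in \eqref{Irp2}, commutes with $\hat T$ as in \eqref{Irp1}, and agrees with $\beta_i$ on $B,M$). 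For example, \eqref{Iam1} comes from expanding $\beta_1\omega(x,y)=\gamma_1([\mathfrak s x,\mathfrak s y]_{\hat A}-\mathfrak s[x,y]_A)$ and collecting the $\rho_B$-terms and the $[\ ,\ ]_B$-term generated by \eqref{C3} and by the bracket of two elements of $B$; the remaining five are entirely analogous, with \eqref{Iam6} arising from the $\hat T$-relation in \eqref{C2}.

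For the converse, I assume $\zeta,\eta$ satisfy \eqref{Iam1}--\eqref{Iam6} and define
\[
\gamma_1(\mathfrak s(x)+a)=\mathfrak s\alpha_1(x)+\zeta(x)+\beta_1(a),\qquad
\gamma_2(s(v)+m)=s\alpha_2(v)+\eta(v)+\beta_2(m).
\]
These are well defined by the unique decomposition, preserve $B$ and $M$, and are bijective since $\alpha_1,\alpha_2,\beta_1,\beta_2$ are and each $\gamma_i$ is block-triangular with invertible diagonal blocks. It then remains to verify that $(\gamma_1,\gamma_2)$ is a homomorphism of relative Rota-Baxter Lie algebras, namely that $\gamma_1$ preserves $[\ ,\ ]_{\hat A}$, that $\gamma_2\hat\rho(\hat x)\hat v=\hat\rho(\gamma_1\hat x)\gamma_2\hat v$, and that $\hat T\gamma_2=\gamma_1\hat T$; expanding each on decomposed elements via \eqref{NLts0}--\eqref{NLts2} and cancelling the terms controlled by the cocycle identities \eqref{L1}--\eqref{L9} reduces it exactly to the matching relation among \eqref{Iam1}--\eqref{Iam6}, after which $K(\gamma)=(\alpha,\beta)$ is read off directly. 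I expect the condition $\hat T\gamma_2=\gamma_1\hat T$ to be the main obstacle, since it couples $\chi$, $T$, $S$ and all the representation data simultaneously and its verification leans on \eqref{Iam6} together with the most intricate cocycle identity \eqref{L9}, making the bookkeeping there the delicate step.
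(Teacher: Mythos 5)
Your proposal follows essentially the same route as the paper's proof: in the forward direction you define $\zeta=\gamma_1\mathfrak s-\mathfrak s\alpha_1$ and $\eta=\gamma_2 s-s\alpha_2$ (well defined since they land in $\ker\mathfrak p=B$ and $\ker p=M$) and derive \eqref{Iam1}--\eqref{Iam6} by expanding the cocycle data \eqref{C1}--\eqref{C3} through the automorphism property of $\gamma$, and in the converse you build $\gamma$ by the same block-triangular formulas $\gamma_1(\mathfrak s(x)+a)=\mathfrak s\alpha_1(x)+\zeta(x)+\beta_1(a)$, $\gamma_2(s(v)+m)=s\alpha_2(v)+\eta(v)+\beta_2(m)$ and verify the relative Rota-Baxter homomorphism conditions. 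This matches the paper's argument in both structure and detail, so the proposal is correct.
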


\begin{proof} Assume that $(\alpha,\beta)\in \mathrm{Aut}(\clA)\times \mathrm{Aut}(\clB)$ is inducible, that is, there is an
automorphism $\gamma=(\gamma_1,\gamma_2)\in \mathrm{Aut}_{\clB}(\clE)$ such
that $\gamma_{1}|_{B}=\beta_1,\gamma_{2}|_{M}=\beta_2$ and $p\gamma_2 s=\alpha_2,\mathfrak p\gamma_1 \mathfrak s=\alpha_1$. 
Since $(\mathfrak s,s)$ is a section of $(\mathfrak p,p)$,
for all
$x\in A$ and $v\in V$,
$$\mathfrak p(\mathfrak s\alpha_1-\gamma_1 \mathfrak s)(x)=\alpha_1(x)-\alpha_1(x)=0,~~ p( s\alpha_2-\gamma_2  s)(v)=\alpha_2(v)-\alpha_2(v)=0,$$
which implies that $(\mathfrak s\alpha_1-\gamma_1 \mathfrak s)(x)\in \mathrm{ker}\mathfrak p=B$ 
and $( s\alpha_2-\gamma_2  s)(v)\in \mathrm{ker} p=V$.
So we can define linear maps $\zeta:A\longrightarrow
B$ and $\eta:V\longrightarrow M$ respectively by
\begin{equation}\label{Iam7}\zeta(x)=(\gamma_1 \mathfrak s-\mathfrak s\alpha_1)(x),~~\eta(v)=(\gamma_2 s-s\alpha_2)(v),~~\forall~x\in A,~v\in V.\end{equation}
Due to $\alpha\in \mathrm{Aut}(\clA), \gamma\in \mathrm{Aut}_{\clB}(\clE)$, we get
\begin{equation}\label{Iam9}\gamma_2\hat{\rho}(\mathfrak s(x))s(v)=\hat{\rho}(\gamma_1\mathfrak s(x))\gamma_{2}s(v),\end{equation}
\begin{equation}\label{Iam10}\alpha_2(\rho(x)v)=\rho(\alpha_1(x))\alpha_2(v),\end{equation}
\begin{equation}\label{Iam11}\beta_2(\nu_{M}(a)m)=\nu_{M}(\beta_1(a))\beta_2(m).\end{equation}
 Using Eqs.~(\ref{C1})-(\ref{C3}) and (\ref{Iam7})-(\ref{Iam10}), for $x\in A, v\in V$, we have
\begin{align*}&
  \beta_2\varpi(x,v)-\varpi(\alpha_1(x),\alpha_2(v))\\=&\gamma_2\varpi(x,v)-\varpi(\alpha_1(x),\alpha_2(v))
  \\=&\gamma_2\hat{\rho}(\mathfrak s(x))s(v)-\gamma_2 s(\rho(x)v)
  -\hat{\rho}(\mathfrak s\alpha_1(x))s\alpha_2(v)+ s(\rho(\alpha_1(x))\alpha_2(v))
  \\=&\hat{\rho}(\gamma_1\mathfrak s(x))\gamma_{2}s(v)-\gamma_2 s(\rho(x)v)
  -\hat{\rho}(\gamma_1\mathfrak s(x))s\alpha_{2}(v)+\hat{\rho}(\gamma_1\mathfrak s(x))s\alpha_{2}(v)
  -\hat{\rho}(\mathfrak s\alpha_1(x))s\alpha_2(v)+s\alpha_2(\rho(x)v)
  \\=&\hat{\rho}(\gamma_1\mathfrak s(x))\eta(v)+\hat{\rho}(\zeta(x))s\alpha_2(v)-\eta (\rho(x)v)\\=&
  \hat{\rho}(\zeta(x))\eta(v)+\hat{\rho}(\mathfrak s\alpha_1(x))\eta(v)-\mu(\alpha_2(v))\zeta(x)-\eta (\rho(x)v)
  \\=&\nu_{M}(\zeta(x))\eta(v)+\rho_{M}(\alpha_1(x))\eta (v)-\mu(\alpha_2(v))\zeta(x)-\eta (\rho(x)v),
\end{align*}
which implies that Eq.~(\ref{Iam5}) holds.  
Analogously, we can show that Eqs.~(\ref{Iam1})-(\ref{Iam4}) and (\ref{Iam6}) hold.

Conversely, suppose that $(\alpha,\beta)\in \mathrm{Aut}(\clA)\times
\mathrm{Aut}(\clB)$ and there are linear maps $\zeta:A\longrightarrow B$ and
$\eta:V\longrightarrow M$
satisfying Eqs. (\ref{Iam1})-(\ref{Iam6}). Since $(\mathfrak s,s)$ is a section of $(\mathfrak p,p)$,
all $\hat{x}\in \hat{A},\hat{v}\in \hat{V}$ can be written as
$\hat{x}=a+\mathfrak s(x),\hat{v}=m+s(v)$ for some $a\in B,x\in A,m\in M,v\in V.$
Define linear maps $\gamma_1:\hat{A}\longrightarrow
\hat{A}$ and $\gamma_2:\hat{V}\longrightarrow
\hat{V}$ respectively by
\begin{equation}\label{Iam12}\gamma_1(\hat{x})=\gamma_1(a+\mathfrak s(x))=\beta_1(a)+\zeta(x)+\mathfrak s\alpha_1(x),\end{equation}
\begin{equation}\label{Iam13}\gamma_2(\hat{v})=\gamma_1(m+ s(v))=\beta_2(m)+\eta(v)+s\alpha_2(v).\end{equation}
It is easy to check that $\gamma_1,~\gamma_2$ are bijective. According to the proof of Theorem 4.1 \cite{028}, we know that
$\gamma_1\in\mathrm{Aut}(\hat{A})$ and $\gamma_1|_{B}=\beta_1,~\mathfrak p\gamma_1 \mathfrak s=\alpha_1$. In the sequel, firstly, 
we prove that $\gamma=(\gamma_1,\gamma_2),~\hat{\rho}$ 
and $\hat{T}$ satisfying Eqs.~(\ref{Irp1})-(\ref{Irp2}).
In the light of Eqs.~(\ref{C1})-(\ref{C3}) and (\ref{Iam10})-(\ref{Iam13}), we have
\begin{align*}&
    \gamma_2\hat{\rho}(\hat{x})(\hat{v})
\\=&\gamma_2\hat{\rho}(a+\mathfrak s(x))(m+s(v))
\\=&\gamma_2(\hat{\rho}(a)m+\hat{\rho}(\mathfrak s(x))m+\hat{\rho}(a)s(v)+\hat{\rho}(\mathfrak s(x))s(v))
\\=&\gamma_2(\hat{\rho}(a)m+\hat{\rho}(\mathfrak s(x))m+\hat{\rho}(a)s(v)+\varpi(x,v)+s(\rho(x)v))
\\=&\beta_2(\nu_M(a)m+\rho_M(x)m-\mu(v)a+\varpi(x,v))+\eta(\rho(x)v)+s\alpha_2(\rho(x)v),
\end{align*}and
\begin{align*}&
  \hat{\rho}(  \gamma_1(\hat{x}))\gamma_2(\hat{v})\\=&\hat{\rho}(  \gamma_1(a+\mathfrak s(x)))\gamma_2(m+s(v))
\\=&\hat{\rho}(\beta_1(a)+\zeta(x)+\mathfrak s\alpha_1(x))(\beta_2(m)+\eta(v)+s\alpha_2(v))
\\=&\hat{\rho}(\beta_1(a)+\zeta(x))(\beta_2(m)+\eta(v))+
\hat{\rho}(\mathfrak s\alpha_1(x))(\beta_2(m)+\eta(v))
+\hat{\rho}(\beta_1(a)+\zeta(x))(s\alpha_2(v))\\&+\hat{\rho}(\mathfrak s\alpha_1(x))(s\alpha_2(v))
\\=&\nu_{M}(\beta_1(a)+\zeta(x))(\beta_2(m)+\eta(v))+\rho_{M}(\alpha_1(x))(\beta_2(m)+\eta(v))
-\mu(\alpha_2(v))(\beta_1(a)+\zeta(x))\\&+\varpi(\alpha_1(x),\alpha_2(v))+s(\rho(\alpha_1(x))\alpha_2(v))
\\=&\beta_2\nu_{M}((a)m)+\nu_{M}(\beta_1(a))\eta(v)+\nu_{M}(\zeta(x))(\beta_2(m)+\eta(v))+\rho_{M}(\alpha_1(x))(\beta_2(m)+\eta(v))
\\&-\mu(\alpha_2(v))(\beta_1(a)+\zeta(x))+\varpi(\alpha_1(x),\alpha_2(v))+s\alpha_2(\rho(x)v).
\end{align*}
Combining Eqs.~(\ref{Iam4})-(\ref{Iam5}), we obtain 
\begin{equation*}\gamma_2\hat{\rho}(\hat{x})(\hat{v})=\hat{\rho}( \gamma_1(\hat{x}))\gamma_2(\hat{v}).\end{equation*}
Analogously, $\hat{T}\gamma_2=\gamma_1\hat{T}$.
Thus, $\gamma\in\mathrm{Aut}(\clE)$.
Secondly, we check that $\gamma_2|_{B}=\beta_2,~p\gamma_2 s=\alpha_2$. Indeed, by Eq.~(\ref{Iam13}), we have
\begin{equation*}\gamma_2(m)=\gamma_2(m+s(0))=\beta_2(m),~\forall~m\in M\end{equation*}
\begin{equation*}p\gamma_2s(v)=p\gamma_2s(0+s(v))=p(\eta(v)+s\alpha_2(v))=\alpha_2(v),~\forall~v\in V.\end{equation*}
This completes the proof.
\end{proof}

Suppose that $((A, [ \ , \ ]_{A}),(V,\rho),T)=\clA$ and $((B, [ \ , \ ]_{B}),(M,\nu_M),S)=\clB$
are two relative Rota-Baxter Lie algebras.
Let $0\longrightarrow\clB\stackrel{(\mathfrak i,i)}{\longrightarrow}
\clE\stackrel{(\mathfrak p,p)}{\longrightarrow}\clA
\longrightarrow0$ be a non-abelian extension of $\clA$ by
$\clB$ with a section $(\mathfrak s,s)$ of $(\mathfrak p,p)$ and 
$(\omega,\varpi,\chi,\mu,\rho_{B},\rho_M)$ be the corresponding non-abelian 2-cocycle
induced by $(\mathfrak s,s)$, where $\clE=((\hat{A}, [ \ , \ ]_{\hat{A}}),(\hat{V},\hat{\rho}),\hat{T})$.
Define bilinear maps 
$\omega_{(\alpha,\beta)}:A\otimes A\rightarrow B,~\varpi_{(\alpha,\beta)}:A\otimes V\rightarrow M$ 
and linear maps $\rho_{(B,(\alpha,\beta))}:A\rightarrow \mathfrak{gl}(B),\rho_{(M,(\alpha,\beta))}:A\rightarrow \mathfrak{gl}(M),
~\mu_{(\alpha,\beta)}:V\rightarrow \mathrm{Hom}(B,M),~\chi_{(\alpha,\beta)}:V\rightarrow B$
 respectively by
 \begin{equation}\label{Inc1}\omega_{(\alpha,\beta)}(x,y)=\beta_1\omega(\alpha_{1}^{-1}(x),\alpha_{1}^{-1}(y)),~~
\varpi_{(\alpha,\beta)}(x,v)=\beta_2\varpi(\alpha_{1}^{-1}(x),\alpha_{2}^{-1}(v)),\end{equation}
  \begin{equation}\label{Inc2} \chi_{(\alpha,\beta)}(v)=\beta_1\chi(\alpha_{2}^{-1}(v)),
 ~~\mu_{(\alpha,\beta)}(v)a=\beta_2\mu(\alpha_{2}^{-1}(v))\beta_{1}^{-1}(a),\end{equation}
 \begin{equation}\label{Inc3}\rho_{(B,(\alpha,\beta))}(x)(a)=\beta_1(\rho(\alpha_{1}^{-1}(x))a),
~~\rho_{(M,(\alpha,\beta))}(x)(a)=\beta_2(\rho(\alpha_{1}^{-1}(x))a),\end{equation}
for all $x,y\in A,a\in B,m\in M,v\in V$ and
$(\alpha,\beta)\in \mathrm{Aut}(\clA)\times \mathrm{Aut}(\clB)$ with $\alpha=(\alpha_1,\alpha_2),\beta=(\beta_1,\beta_2)$.

\begin{pro} With the above notations,
$(\omega_{(\alpha,\beta)},\varpi_{(\alpha,\beta)},\chi_{(\alpha,\beta)},\mu_{(\alpha,\beta)},
\rho_{(B,(\alpha,\beta))},\rho_{(M,(\alpha,\beta))})$
is a non-abelian 2-cocycle on $\clA$ with values in
 $\clB$. \end{pro}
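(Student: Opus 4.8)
The plan is to avoid verifying the nine identities (\ref{L1})--(\ref{L9}) by hand, and instead to exploit Proposition \ref{LY}: a sextuple is a non-abelian 2-cocycle on $\clA$ with values in $\clB$ precisely when the construction (\ref{NLts0})--(\ref{NLts2}) built from it makes $A\oplus B$ and $V\oplus M$ into a relative Rota-Baxter Lie algebra. So it suffices to exhibit a relative Rota-Baxter Lie algebra whose structure maps are exactly those attached to the twisted sextuple $(\omega_{(\alpha,\beta)},\varpi_{(\alpha,\beta)},\chi_{(\alpha,\beta)},\mu_{(\alpha,\beta)},\rho_{(B,(\alpha,\beta))},\rho_{(M,(\alpha,\beta))})$. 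I would obtain it as the transport (pushforward) of the already-known relative Rota-Baxter Lie algebra $\clA\oplus_{(\omega,\varpi,\chi)}\clB$ along a suitable linear isomorphism.

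Concretely, I would set $\Phi:=\alpha_1\oplus\beta_1:A\oplus B\to A\oplus B$ and $\Psi:=\alpha_2\oplus\beta_2:V\oplus M\to V\oplus M$, which are linear isomorphisms since $\alpha_1,\alpha_2,\beta_1,\beta_2$ are. Transport of structure along any linear isomorphism carries a relative Rota-Baxter Lie algebra to an isomorphic one, so $(\Phi,\Psi)$ pushes $\clA\oplus_{(\omega,\varpi,\chi)}\clB$ forward to a relative Rota-Baxter Lie algebra with bracket $[\,\cdot\,,\,\cdot\,]'=\Phi\circ[\,\cdot\,,\,\cdot\,]_{\omega}\circ(\Phi^{-1}\times\Phi^{-1})$, representation $\rho'(\hat x)=\Psi\,\rho_{\varpi}(\Phi^{-1}\hat x)\,\Psi^{-1}$, and operator $T'=\Phi\circ T_{\chi}\circ\Psi^{-1}$.

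The central step is then a direct unwinding showing that these pushed-forward maps are exactly the ones attached to the twisted sextuple. Here the hypotheses $\alpha\in\mathrm{Aut}(\clA)$ and $\beta\in\mathrm{Aut}(\clB)$ enter precisely and only through the ``diagonal'' terms: $\alpha_1[\alpha_1^{-1}x,\alpha_1^{-1}y]_A=[x,y]_A$ and $\beta_1[\beta_1^{-1}a,\beta_1^{-1}b]_B=[a,b]_B$ recover the unchanged brackets on $A$ and $B$; $\alpha_2\rho(\alpha_1^{-1}x)\alpha_2^{-1}=\rho(x)$ (from (\ref{Irp2})) and $\beta_2\nu_M(\beta_1^{-1}a)\beta_2^{-1}=\nu_M(a)$ (likewise for $\clB$) recover $\rho$ and $\nu_M$; and $\alpha_1 T\alpha_2^{-1}=T$, $\beta_1 S\beta_2^{-1}=S$ (from (\ref{Irp1})) recover $T$ and $S$. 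Every remaining term becomes a twisted datum by its definition (\ref{Inc1})--(\ref{Inc3}): the $\omega$-term becomes $\beta_1\omega(\alpha_1^{-1}x,\alpha_1^{-1}y)=\omega_{(\alpha,\beta)}(x,y)$, the $\varpi$-term becomes $\beta_2\varpi(\alpha_1^{-1}x,\alpha_2^{-1}v)=\varpi_{(\alpha,\beta)}(x,v)$, and the $\mu$-, $\rho_B$-, $\rho_M$-, $\chi$-terms likewise (matching the $\rho_B$- and $\rho_M$-terms requires reading (\ref{Inc3}) as the conjugations $\beta_1\rho_B(\alpha_1^{-1}(\cdot))\beta_1^{-1}$ and $\beta_2\rho_M(\alpha_1^{-1}(\cdot))\beta_2^{-1}$). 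Matching all three structure maps term-by-term identifies the pushforward with $\clA\oplus_{(\omega_{(\alpha,\beta)},\varpi_{(\alpha,\beta)},\chi_{(\alpha,\beta)})}\clB$; since the pushforward is a relative Rota-Baxter Lie algebra, Proposition \ref{LY} yields the claim.

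The main obstacle is bookkeeping rather than conceptual: one must confirm that twisting only the ``cocycle'' data $(\omega,\varpi,\chi,\mu,\rho_B,\rho_M)$ is compatible with leaving the ambient data $([\,\cdot\,,\,\cdot\,]_A,[\,\cdot\,,\,\cdot\,]_B,\rho,\nu_M,T,S)$ fixed, which is exactly the content of $\alpha,\beta$ being genuine relative Rota-Baxter Lie algebra automorphisms. A purely computational alternative would substitute (\ref{Inc1})--(\ref{Inc3}) into each of (\ref{L1})--(\ref{L9}), apply $\alpha^{-1},\beta^{-1}$ to reduce each identity to the corresponding one for the untwisted sextuple, and invoke that $(\omega,\varpi,\chi,\mu,\rho_B,\rho_M)$ is already a non-abelian 2-cocycle; there the delicate case is (\ref{L9}), where the interplay of $T$, $S$, $\chi$ and the $\mu$-terms must be tracked, but the relations $\alpha_1 T=T\alpha_2$ and $\beta_1 S=S\beta_2$ make it go through.
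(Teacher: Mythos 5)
Your proposal is correct, but it takes a genuinely different route from the paper. The paper proves the proposition by direct substitution: it verifies identity (\ref{L4}) for the twisted sextuple by plugging the definitions (\ref{Inc1})--(\ref{Inc3}) into the left-hand side, using the compatibility $\rho(\alpha_1^{-1}(y))\alpha_2^{-1}(v)=\alpha_2^{-1}(\rho(y)v)$ to reduce to (\ref{L4}) for the original cocycle, and then asserts that (\ref{L1})--(\ref{L3}) and (\ref{L5})--(\ref{L9}) follow similarly --- exactly the ``purely computational alternative'' you sketch in your last paragraph. Your primary argument instead routes everything through Proposition \ref{LY}: transport the relative Rota-Baxter Lie algebra $\clA\oplus_{(\omega,\varpi,\chi)}\clB$ along the linear isomorphism $(\alpha_1\oplus\beta_1,\alpha_2\oplus\beta_2)$ and identify the resulting structure maps with those of formulas (\ref{NLts0})--(\ref{NLts2}) built from the twisted sextuple. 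This is sound: the diagonal terms collapse to the original $[\,,\,]_A$, $[\,,\,]_B$, $\rho$, $\nu_M$, $T$, $S$ precisely because $(\alpha_1,\alpha_2)$ and $(\beta_1,\beta_2)$ satisfy (\ref{Irp1})--(\ref{Irp2}), and the off-diagonal terms become the twisted data by definition, so Proposition \ref{LY} applies in the reverse direction. What your approach buys is a single conceptual verification replacing nine identities, and it makes transparent exactly where the automorphism hypotheses are used; what the paper's approach buys is independence from Proposition \ref{LY} and a template computation that is reused almost verbatim in the proof of Theorem \ref{Eth1}. Two small points worth making explicit if you write this up: you should state (even in one line) that transport of structure along a pair of linear isomorphisms preserves the relative Rota-Baxter Lie algebra axioms, since that is the load-bearing general fact; and your reading of (\ref{Inc3}) as the conjugation $\beta_1\rho_B(\alpha_1^{-1}(\cdot))\beta_1^{-1}$ is the right one --- the formula as printed omits the $\beta_1^{-1}$, but the paper itself uses the conjugated form in the proof of Theorem \ref{Eth1}, so you have correctly resolved a typo rather than introduced an error.
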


\begin{proof}
Since $\alpha\in \mathrm{Aut}(\clA)$, for any $x\in A,v\in V$, we have
 \begin{equation}\label{Inc4}\rho(\alpha_{2}^{-1}(y))\alpha_{2}^{-1}(v)=\alpha_{2}^{-1}(\rho(y)v).\end{equation}
By Eqs.~(\ref{L4})and (\ref{Inc1})-(\ref{Inc4}), for all $x,y\in A$ and $v\in V$, we get
\begin{align*}&\varpi_{(\alpha,\beta)}(x,\rho(y)v)-\varpi_{(\alpha,\beta)}(y,\rho(x)v)-\varpi_{(\alpha,\beta)}([x,y]_A,v)
\\=&\beta_2\varpi(\alpha_{1}^{-1}(x),\alpha_{2}^{-1}(\rho(y)v))
-\beta_2\varpi(\alpha_{1}^{-1}(y),\alpha_{2}^{-1}(\rho(x)v))
-\beta_2\varpi(\alpha_{1}^{-1}([x,y]_A),\alpha_{2}^{-1}(v))
\\=&\beta_2\varpi(\alpha_{1}^{-1}(x),\rho(\alpha_{2}^{-1}(y))\alpha_{2}^{-1}(v))
-\beta_2\varpi(\alpha_{1}^{-1}(y),\rho(\alpha_{2}^{-1}(x))\alpha_{2}^{-1}(v))
-\beta_2\varpi([\alpha_{1}^{-1}(x),\alpha_{1}^{-1}(y)]_A),\alpha_{2}^{-1}(v))
\\=&
\beta_2(\rho_{M}(\alpha_{1}^{-1}(y))
\varpi(\alpha_{1}^{-1}(x),\alpha_{2}^{-1}(v))
-\rho_{M}(\alpha_{1}^{-1}(x))\varpi(\alpha_{1}^{-1}(y),\alpha_{2}^{-1}(y))
-\mu(\alpha_{2}^{-1}(v))\omega(\alpha_{1}^{-1}(x),\alpha_{1}^{-1}(y)))
\\=&\rho_{(M,(\alpha,\beta))}(y)\varpi_{(\alpha,\beta)}(x,v)-\rho_{(M,(\alpha,\beta))}(x)\varpi_{(\alpha,\beta)}(y,v)
-\mu_{(\alpha,\beta)}(v)\omega_{(\alpha,\beta)}(x,y)
,\end{align*}
which implies that Eq.~(\ref{L4}) holds for
$(\omega_{(\alpha,\beta)},\varpi_{(\alpha,\beta)},\mu_{(\alpha,\beta)},\rho_{(M,(\alpha,\beta))})$. Similarly, we can check that
Eqs.~(\ref{L1})-(\ref{L3}) and (\ref{L5})-(\ref{L9}) hold. This finishes the proof.
\end{proof}

\begin{thm} \label{Eth1} Assume that $((A, [ \ , \ ]_{A}),(V,\rho),T)=\clA$ and 
$((B, [ \ , \ ]_{B}),(M,\nu_M),S)=\clB$ are two
 relative Rota-Baxter Lie algebras.
Let $0\longrightarrow\clB\stackrel{(\mathfrak i,i)}{\longrightarrow}
\clE\stackrel{(\mathfrak p,p)}{\longrightarrow}\clA
\longrightarrow0$ be a non-abelian extension of $\clA$ by
$\clB$ with a section $(\mathfrak s,s)$ of $(\mathfrak p,p)$ and 
$(\omega,\varpi,\chi,\mu,\rho_{B},\rho_M)$ be the corresponding non-abelian 2-cocycle
induced by $(\mathfrak s,s)$, where $\clE=((\hat{A}, [ \ , \ ]_{\hat{A}}),(\hat{V},\hat{\rho}),\hat{T})$.
 A pair $(\alpha,\beta)\in \mathrm{Aut}(\clA
)\times \mathrm{Aut}(\clB)$ is inducible if and only if the two
non-abelian 2-cocycles $(\omega,\varpi,\chi,\mu,\rho_{B},\rho_M)$
 and $(\omega_{(\alpha,\beta)},\varpi_{(\alpha,\beta)},\chi_{(\alpha,\beta)},\mu_{(\alpha,\beta)},
\rho_{(B,(\alpha,\beta))},\rho_{(M,(\alpha,\beta))})$ are equivalent.
\end{thm}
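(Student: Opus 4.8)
The plan is to deduce the statement directly from Theorem \ref{EC}. By that theorem, $(\alpha,\beta)$ is inducible precisely when there exist linear maps $\zeta:A\to B$ and $\eta:V\to M$ satisfying Eqs.~(\ref{Iam1})--(\ref{Iam6}). On the other hand, by the definition of equivalence of non-abelian $2$-cocycles, the cocycles $(\omega,\varpi,\chi,\mu,\rho_B,\rho_M)$ and $(\omega_{(\alpha,\beta)},\varpi_{(\alpha,\beta)},\chi_{(\alpha,\beta)},\mu_{(\alpha,\beta)},\rho_{(B,(\alpha,\beta))},\rho_{(M,(\alpha,\beta))})$ are equivalent precisely when there exist linear maps $\zeta'':A\to B$ and $\eta'':V\to M$ satisfying Eqs.~(\ref{E1})--(\ref{E6}), with the primed cocycle taken to be the twisted one defined by (\ref{Inc1})--(\ref{Inc3}). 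Thus it is enough to set up a bijection between solutions $(\zeta,\eta)$ of the first system and solutions $(\zeta'',\eta'')$ of the second, which will establish both implications of the theorem at once.

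The correspondence I would use is
\[
\zeta''=-\zeta\circ\alpha_1^{-1},\qquad \eta''=-\eta\circ\alpha_2^{-1}.
\]
Since $\alpha_1$ and $\alpha_2$ are invertible this is evidently a bijection. To verify that it carries one system to the other, I would substitute $x\mapsto\alpha_1^{-1}(x)$, $y\mapsto\alpha_1^{-1}(y)$, $v\mapsto\alpha_2^{-1}(v)$, $a\mapsto\beta_1^{-1}(a)$ and $m\mapsto\beta_2^{-1}(m)$ into Eqs.~(\ref{Iam1})--(\ref{Iam6}), and simplify using that $\alpha=(\alpha_1,\alpha_2)$ is an automorphism of $\clA$ (so that $\alpha_1[\,\cdot,\cdot\,]_A=[\alpha_1\cdot,\alpha_1\cdot]_A$, $\rho(\alpha_1^{-1}\cdot)\alpha_2^{-1}=\alpha_2^{-1}\rho(\cdot)$ and $T\alpha_2^{-1}=\alpha_1^{-1}T$) together with the defining formulas (\ref{Inc1})--(\ref{Inc3}) for the twisted data. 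The four equations (\ref{Iam2}), (\ref{Iam3}), (\ref{Iam4}) and (\ref{Iam6}) then go over term-by-term to (\ref{E2}), (\ref{E3}), (\ref{E4}) and (\ref{E5}); in particular it is (\ref{Iam2}) that forces the sign $\zeta''=-\zeta\circ\alpha_1^{-1}$, and (\ref{Iam4}), (\ref{Iam6}) that force $\eta''=-\eta\circ\alpha_2^{-1}$.

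The remaining two equations, which govern the $2$-cocycle data $\omega$ and $\varpi$, are where I expect the real work to lie. Substituting into (\ref{Iam1}) and (\ref{Iam5}) produces relations of the shape (\ref{E1}) and (\ref{E6}), but carrying the untwisted representations $\rho_B,\rho_M,\mu$ rather than the twisted $\rho_{(B,(\alpha,\beta))},\rho_{(M,(\alpha,\beta))},\mu_{(\alpha,\beta)}$, and with sign discrepancies in the coupling terms $[\zeta''(x),\zeta''(y)]_B$ and $\nu_M(\zeta''(x))\eta''(v)$. To repair this I would feed in the already-transformed representation equations: (\ref{E2}) says exactly that $\rho_{(B,(\alpha,\beta))}(x)-\rho_B(x)=[\zeta''(x),\,\cdot\,]_B$, and likewise (\ref{E3}) and (\ref{E4}) express the differences $\rho_{(M,(\alpha,\beta))}-\rho_M$ and $\mu_{(\alpha,\beta)}-\mu$ in terms of $\zeta''$ and $\eta''$ respectively. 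Substituting these identities converts the untwisted representations into the twisted ones and, in doing so, exactly cancels the leftover coupling terms, yielding (\ref{E1}) and (\ref{E6}) on the nose. Hence (\ref{E1}) is equivalent to (\ref{Iam1}) and (\ref{Iam2}) taken together, and (\ref{E6}) to (\ref{Iam5}), (\ref{Iam3}) and (\ref{Iam4}) taken together; this cross-dependence among the equations is the main obstacle, and once it is handled the two systems are seen to be equivalent under the stated correspondence, completing the proof.
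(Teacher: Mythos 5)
Your proposal is correct and takes essentially the same route as the paper: both reduce the theorem to Theorem \ref{EC} and convert the system (\ref{Iam1})--(\ref{Iam6}) into the equivalence conditions (\ref{E1})--(\ref{E6}) by substituting $x\mapsto\alpha_1^{-1}(x)$, $v\mapsto\alpha_2^{-1}(v)$, $a\mapsto\beta_1^{-1}(a)$, $m\mapsto\beta_2^{-1}(m)$ and invoking (\ref{Inc1})--(\ref{Inc3}). The only difference is cosmetic: the paper treats the twisted cocycle as the unprimed one, so its witness is $(\zeta\circ\alpha_1^{-1},\eta\circ\alpha_2^{-1})$ with no sign and (\ref{Iam1}), (\ref{Iam5}) pass directly to (\ref{E1}), (\ref{E6}), whereas your opposite convention forces the minus signs and the auxiliary use of (\ref{E2})--(\ref{E4}) that you describe (where, with your convention, (\ref{E2}) actually reads $\rho_B(x)-\rho_{(B,(\alpha,\beta))}(x)=[\zeta''(x),\,\cdot\,]_B$, the opposite sign to the one you stated, though this does not affect the cancellation).
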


\begin{proof}
Assume that $(\alpha,\beta)\in \mathrm{Aut}(\clA)\times \mathrm{Aut}(\clB)$
is inducible, by Theorem ~\ref{EC}, there are linear maps
$\zeta:A\longrightarrow B$ and $\eta:V\longrightarrow M$ satisfying
Eqs.~~(\ref{Iam1})-(\ref{Iam7}). For all $x\in A,a\in
B$, there exist $x_0\in A,a_0\in B$ such that $x=\alpha_1(x_0),a=\beta_1(a_0)$. Thus,
by Eqs.~(\ref{Iam2}) and (\ref{Inc3}), we obtain
\begin{align*}&
\rho_{(B,(\alpha,\beta))}(x)(a)-\rho_{B}(x)(a)\\=&\beta_1\rho(\alpha_{1}^{-1}(x))\beta^{-1}(a)-\rho_{B}(x)(a)
\\=&\beta_1(\rho_{B}(x_0)a_0)-\rho_{B}(\alpha_1(x_0))(\beta_1(a_0))
\\=&[\zeta(x_0),\beta_1 (a_0)]_{B}
\\=&[\zeta\alpha_{1}^{-1}(x),a]_{B},
 \end{align*}
which means that Eq.~(\ref{E2}) holds. Analogously,
Eqs.~(\ref{E1}) and (\ref{E3})-(\ref{E6}) hold. Thus,
$(\omega,\varpi,\chi,\mu,\rho_{B},\rho_M)$
and $(\omega_{(\alpha,\beta)},\varpi_{(\alpha,\beta)},\chi_{(\alpha,\beta)},\mu_{(\alpha,\beta)},
\rho_{(B,(\alpha,\beta))},\rho_{(M,(\alpha,\beta))})$
are equivalent.

The converse part can be proved analogously.

\end{proof}

\begin{cor} Suppose that $((A, [ \ , \ ]_{A}),(V,\rho),T)=\clA$ and
 $((B, [ \ , \ ]_{B}),(M,\nu_M),S)=\clB$ are two relative Rota-baxter Lie algebras. Let
$\mathcal{E}:0\longrightarrow\clB\stackrel{(\mathfrak i, i)}{\longrightarrow}
\clE\stackrel{(\mathfrak p,p)}{\longrightarrow}\clA\longrightarrow0$
be an abelian extension of $\clA$
 by $\clB$ with a section $(\mathfrak s,s)$ of $(\mathfrak p,p)$ and
$(\omega,\varpi,\chi)$ be the corresponding 2-cocycle
induced by $(\mathfrak s,s)$. A pair $(\alpha,\beta)\in \mathrm{Aut}(\clA)\times \mathrm{Aut}(\clB)$ is inducible if and only if there are
linear maps $\zeta:A\longrightarrow B $ and $\eta:V\longrightarrow M $
satisfying the following conditions:
\begin{align}\label{AEE1}\beta_1\omega(x,y)-\omega(\alpha_1(x),\alpha_1(y))=\rho_{B}(\alpha_1(x))\zeta(y)-
     \rho_{B}(\alpha_1(y))\zeta(x)-\zeta([x,y]_A),
\end{align}
\begin{equation}\label{AEE2}
     \beta_2\varpi(x,v)-\varpi(\alpha_1(x),\alpha_2(v))=\rho_{M}(\alpha_1(x))\eta (v)-\mu(\alpha_2(v))\zeta(x)-\eta (\rho(x)v),
\end{equation}
\begin{equation}\label{AEE3}
     \beta_1 \chi(v)-\chi(\alpha_2(v))=S\eta(v)-\zeta T(v).
\end{equation}
\begin{equation}\label{AEE4}
     \beta_1 (\rho_{B}(x)a)=\rho_{B}(\alpha_1(x))\beta_1 (a),
\end{equation}
\begin{equation}\label{AEE5}
     \beta_2 (\rho_{M}(x)m)=\rho_{M}(\alpha_1(x))\beta_2 (m),
\end{equation}
\begin{equation}\label{AEE6}
      \beta_2 (\mu(v)a)=\mu(\alpha_2(v))\beta_1 (a),
\end{equation}
for all $x,y\in A,a\in B,v\in V$ and $m\in M$, where $\alpha=(\alpha_1,\alpha_2)$ and $\beta=(\beta_1,\beta_2)$.
\end{cor}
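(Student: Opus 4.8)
The plan is to obtain this corollary as an immediate specialization of Theorem \ref{EC} to the abelian setting. Recall that $\clB = ((B,[\ ,\ ]_B),(M,\nu_M),S)$ being abelian means, by the definition of an abelian relative Rota-Baxter Lie algebra, that the Lie bracket on $B$ vanishes, $[\ ,\ ]_B = 0$, and that the representation $\nu_M$ is trivial, so $\nu_M(a) = 0$ for all $a \in B$. I would feed these two vanishing conditions into each of the six inducibility equations (\ref{Iam1})--(\ref{Iam6}) provided by Theorem \ref{EC} and read off what remains.

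Concretely, I would proceed equation by equation. In (\ref{Iam1}) the term $[\zeta(x),\zeta(y)]_B$ vanishes, yielding (\ref{AEE1}); in (\ref{Iam2}) the right-hand side $[\zeta(x),\beta_1(a)]_B$ becomes zero, so it reduces to (\ref{AEE4}); in (\ref{Iam3}) the summand $\nu_M(\zeta(x))\beta_2(m)$ drops out, giving (\ref{AEE5}); in (\ref{Iam4}) the right-hand side $-\nu_M(\beta_1(a))\eta(v)$ vanishes, producing (\ref{AEE6}); and in (\ref{Iam5}) the term $\nu_M(\zeta(x))\eta(v)$ disappears, leaving (\ref{AEE2}). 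Equation (\ref{Iam6}) contains neither a bracket in $B$ nor a $\nu_M$-term, so it coincides with (\ref{AEE3}) without modification.

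Since these substitutions amount to deleting identically-zero terms, the system (\ref{Iam1})--(\ref{Iam6}) and the system (\ref{AEE1})--(\ref{AEE6}) are the same equations under the abelianness hypothesis. Hence the existence of linear maps $\zeta\colon A\longrightarrow B$ and $\eta\colon V\longrightarrow M$ solving one system is equivalent to solving the other, and both the forward and backward implications of Theorem \ref{EC} transfer verbatim. There is essentially no obstacle here beyond the bookkeeping of pairing each specialized equation with its abelian counterpart; the only point meriting a moment's care is confirming that abelianness is precisely the hypothesis that annihilates the bracket and $\nu_M$ contributions, which is immediate from the definition of an abelian relative Rota-Baxter Lie algebra.
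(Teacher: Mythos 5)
Your proposal is correct and follows exactly the paper's route: the paper's proof of this corollary is the one-line remark that it ``can be obtained directly by Theorem \ref{EC},'' and your equation-by-equation specialization (using $[\ ,\ ]_B=0$ and $\nu_M=0$ from the definition of an abelian relative Rota-Baxter Lie algebra to delete the bracket and $\nu_M$ terms from (\ref{Iam1})--(\ref{Iam6})) is precisely the bookkeeping that remark leaves implicit.
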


\begin{proof} 
It can be obtained directly by Theorem \ref{EC}.
\end{proof}

For all $(\alpha,\beta)\in \mathrm{Aut}(\clA)\times \mathrm{Aut}(\clB)$, $(\omega_{(\alpha,\beta)},\varpi_{(\alpha,\beta)},\chi_{(\alpha,\beta)})$ may not be a 2-cocycle.
 In fact,  $(\omega_{(\alpha,\beta)},\varpi_{(\alpha,\beta)},\chi_{(\alpha,\beta)})$
 is a 2-cocycle if Eqs.~(\ref{AEE4})-(\ref{AEE6}) hold.
 Thus, it is natural to introduce the space of compatible pairs of automorphisms:
\begin{align*}
		C_{(\mu,\rho_{B},\rho_{M})}=&\left\{\begin{aligned}&(\alpha,\beta)\in \mathrm{Aut}(\clA
)\times \mathrm{Aut}(\clB),\\&~~with~~\alpha=(\alpha_1,\alpha_2),~\beta=(\beta_1,\beta_2)
\end{aligned}\left|
\begin{aligned}& \beta_1 (\rho_{B}(x)a)=\rho_{B}(\alpha_1(x))\beta_1 (a),
\\&  \beta_2 (\rho_{M}(x)m)=\rho_{M}(\alpha_1(x))\beta_2 (m),\\& \beta_2 (\mu(v)a)=\mu(\alpha_2(v))\beta_1 (a),
\\& \forall~x\in A,a\in B,v\in V,m\in M
     \end{aligned}\right.\right\}.
	\end{align*}

Analogously to Theorem \ref{Eth1}, we get

\begin{thm}\label{Wm6} Assume that $((A, [ \ , \ ]_{A}),(V,\rho),T)=\clA$ and $((B, [ \ , \ ]_{B}),(M,\nu_M),S)=\clB$
are two relative Rota-Baxter Lie algebras. Let
$\mathcal{E}:0\longrightarrow\clB\stackrel{(\mathfrak i,i)}{\longrightarrow}
\clE\stackrel{(p,p))}{\longrightarrow}\clA\longrightarrow0$
be an abelian extension of $\clA$ by $\clB$ with a section $(\mathfrak s,s)$ of $(\mathfrak p,p)$ and
$(\omega,\varpi,\chi)$ be the corresponding 2-cocycle
induced by $(\mathfrak s,s)$.
 A pair $(\alpha,\beta)\in C_{(\mu,\rho_{B},\rho_{M})}$ is inducible
 if and only if  $(\omega,\varpi,\chi)$ and 
 $(\omega_{(\alpha,\beta)},\varpi_{(\alpha,\beta)},\chi_{(\alpha,\beta)})$
 are in the same cohomological class.
\end{thm}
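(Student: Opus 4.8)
The statement is the abelian counterpart of Theorem \ref{Eth1}, and the plan is to run the same argument while replacing equivalence of non-abelian $2$-cocycles by cohomologousness of ordinary $2$-cocycles. The first thing I would record is that because $(\alpha,\beta)\in C_{(\mu,\rho_{B},\rho_{M})}$, the three compatibility identities defining that space are exactly Eqs.~(\ref{AEE4})--(\ref{AEE6}); substituting $x\mapsto\alpha_{1}^{-1}(x)$ and $a\mapsto\beta_{1}^{-1}(a)$ (and likewise for $\rho_{M}$, $\mu$) into them and comparing with Eqs.~(\ref{Inc2})--(\ref{Inc3}) gives $\rho_{(B,(\alpha,\beta))}=\rho_{B}$, $\rho_{(M,(\alpha,\beta))}=\rho_{M}$ and $\mu_{(\alpha,\beta)}=\mu$. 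Hence, as noted just before the theorem, both $(\omega,\varpi,\chi)$ (by Remark \ref{Rk1}) and $(\omega_{(\alpha,\beta)},\varpi_{(\alpha,\beta)},\chi_{(\alpha,\beta)})$ genuinely lie in $\mathcal{Z}^{2}(\clA,\clB)$, so ``same cohomological class'' means their difference lies in $\mathcal{B}^{2}(\clA,\clB)$.

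The key step is a change-of-variables dictionary. The plan is to apply the preceding Corollary: since $(\alpha,\beta)\in C_{(\mu,\rho_{B},\rho_{M})}$ already satisfies Eqs.~(\ref{AEE4})--(\ref{AEE6}), the pair is inducible precisely when there exist $\zeta\colon A\to B$ and $\eta\colon V\to M$ solving Eqs.~(\ref{AEE1})--(\ref{AEE3}). Given such $\zeta,\eta$ I would set $\bar{\zeta}=\zeta\circ\alpha_{1}^{-1}$, $\bar{\eta}=\eta\circ\alpha_{2}^{-1}$ and $\varphi=(\bar{\zeta},\bar{\eta})$, and rewrite Eqs.~(\ref{AEE1})--(\ref{AEE3}) with $x,y$ replaced by $\alpha_{1}(x),\alpha_{1}(y)$ and $v$ by $\alpha_{2}(v)$. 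Using that $\alpha_{1}$ is a Lie automorphism together with the relative Rota--Baxter automorphism identities $\alpha_{2}(\rho(x)v)=\rho(\alpha_{1}(x))\alpha_{2}(v)$ and $T\alpha_{2}=\alpha_{1}T$ coming from Eqs.~(\ref{Irp1})--(\ref{Irp2}), the three equations become, after comparison with Eqs.~(\ref{Inc1})--(\ref{Inc2}) and the coboundary formulas (\ref{Coy1})--(\ref{Coy3}),
\begin{equation*}
\omega_{(\alpha,\beta)}-\omega=(\delta\varphi)_{B},\qquad
\varpi_{(\alpha,\beta)}-\varpi=(\delta\varphi)_{M},\qquad
\chi_{(\alpha,\beta)}-\chi=h_{T}(\varphi),
\end{equation*}
that is, $(\omega_{(\alpha,\beta)},\varpi_{(\alpha,\beta)},\chi_{(\alpha,\beta)})-(\omega,\varpi,\chi)=\mathcal{D_{R}}(\bar{\zeta},\bar{\eta})\in\mathcal{B}^{2}(\clA,\clB)$.

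With this dictionary both implications are immediate. For the forward direction, inducibility yields $\zeta,\eta$ via the Corollary, and the displayed identities exhibit the two cocycles as cohomologous. For the converse, a witnessing $1$-cochain $(\bar{\zeta},\bar{\eta})$ for cohomologousness lets me define $\zeta=\bar{\zeta}\circ\alpha_{1}$ and $\eta=\bar{\eta}\circ\alpha_{2}$; reversing the same computation recovers Eqs.~(\ref{AEE1})--(\ref{AEE3}), while Eqs.~(\ref{AEE4})--(\ref{AEE6}) hold for free since $(\alpha,\beta)\in C_{(\mu,\rho_{B},\rho_{M})}$, so the Corollary makes $(\alpha,\beta)$ inducible.

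The substantive work is confined to the dictionary step, and the only subtlety there is purely book-keeping: one must insert the three automorphism compatibility identities into Eqs.~(\ref{AEE1})--(\ref{AEE3}) at exactly the right places so that the twisted arguments $\rho(\alpha_{1}^{-1}(x))$, $[\alpha_{1}^{-1}(x),\alpha_{1}^{-1}(y)]_{A}$ and $T\alpha_{2}^{-1}$ collapse into the untwisted coboundary operators of (\ref{Coy1})--(\ref{Coy3}). I expect no genuine obstacle beyond this matching, since the argument is structurally identical to the proof of Theorem \ref{Eth1}, specialised to the abelian $\clB$ where $[\,\cdot\,,\cdot\,]_{B}=0$ and $\nu_{M}=0$ kill the extra terms appearing in Eqs.~(\ref{Iam1})--(\ref{Iam6}).
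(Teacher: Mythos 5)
Your proof is correct and follows essentially the same route the paper intends: the paper only states that Theorem \ref{Wm6} is obtained ``analogously to Theorem \ref{Eth1}'', and your argument is precisely that analogy made explicit --- reduce inducibility to Eqs.~(\ref{AEE1})--(\ref{AEE3}) via the Corollary (with (\ref{AEE4})--(\ref{AEE6}) automatic for $(\alpha,\beta)\in C_{(\mu,\rho_{B},\rho_{M})}$), then substitute $x\mapsto\alpha_{1}^{-1}(x)$, $v\mapsto\alpha_{2}^{-1}(v)$ and set $\bar{\zeta}=\zeta\alpha_{1}^{-1}$, $\bar{\eta}=\eta\alpha_{2}^{-1}$ to identify those equations with $(\omega_{(\alpha,\beta)},\varpi_{(\alpha,\beta)},\chi_{(\alpha,\beta)})-(\omega,\varpi,\chi)=\mathcal{D_{R}}(\bar{\zeta},\bar{\eta})$. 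Your preliminary observation that membership in $C_{(\mu,\rho_{B},\rho_{M})}$ forces $\rho_{(B,(\alpha,\beta))}=\rho_{B}$, $\rho_{(M,(\alpha,\beta))}=\rho_{M}$, $\mu_{(\alpha,\beta)}=\mu$ (so that both triples are cocycles for the same representation and the comparison of classes is meaningful) is exactly the point the paper makes just before introducing $C_{(\mu,\rho_{B},\rho_{M})}$.
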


\section{Wells exact sequences for relative Rota-Baxter Lie algebras}
In this section, we consider the Wells map associated with non-abelian extensions of relative Rota-Baxter Lie algebras.
Then we interpret the results gained in Section 5 in terms of the Wells map.

Assume that $((A, [ \ , \ ]_{A}),(V,\rho),T)=\clA$ and 
$((B, [ \ , \ ]_{B}),(M,\nu_M),S)=\clB$
are two relative Rota-Baxter Lie algebras.
Let $\mathcal{E}:0\longrightarrow\clB\stackrel{(\mathfrak i,i)}{\longrightarrow}
\clE\stackrel{(\mathfrak p,p)}{\longrightarrow}\clA
\longrightarrow0$ be a non-abelian extension of $\clA$ by
$\clB$ with a section $(\mathfrak s,s)$ of $(\mathfrak p,p)$ and 
$(\omega,\varpi,\chi,\mu,\rho_{B},\rho_M)$ be the corresponding non-abelian 2-cocycle
induced by $(\mathfrak s,s)$, where $\clE=((\hat{A}, [ \ , \ ]_{\hat{A}}),(\hat{V},\hat{\rho}),\hat{T})$. 

Define a  map $W:\mathrm{Aut}(\clA
)\times \mathrm{Aut}(\clB) \longrightarrow \mathcal{H}^{2}_{nab}(\clA,\clB)$ by
\begin{equation}\label{W1}
	W(\alpha,\beta)=[(\omega_{(\alpha,\beta)},\varpi_{(\alpha,\beta)},\chi_{(\alpha,\beta)},\mu_{(\alpha,\beta)},
\rho_{(B,(\alpha,\beta))},\rho_{(M,(\alpha,\beta))})
-(\omega,\varpi,\chi,\mu,\rho_{B},\rho_M)].
\end{equation}
The map $W$ is called the Wells map associated with $\mathcal{E}$. In the light of Lemma \ref{Le1}, one can easily check that
the Wells map $W$ does not depend on the choice of sections.

 \begin{thm} \label{Wm3} Let $((A, [ \ , \ ]_{A}),(V,\rho),T)=\clA$ and $((B, [ \ , \ ]_{B}),(M,\nu_M),S)=\clB$
be two relative Rota-Baxter Lie algebras.
Suppose that $\mathcal{E}:0\longrightarrow\clB\stackrel{(\mathfrak i,i)}{\longrightarrow}
\clE\stackrel{(\mathfrak p,p)}{\longrightarrow}\clA
\longrightarrow0$ is a non-abelian extension of $\clA$ by
$\clB$ with a section $(\mathfrak s,s)$ of $(\mathfrak p,p)$ and 
$(\omega,\varpi,\chi,\mu,\rho_{B},\rho_M)$ is the corresponding non-abelian 2-cocycle
induced by $(\mathfrak s,s)$, where $((\hat{A}, [ \ , \ ]_{\hat{A}}),(\hat{V},\hat{\rho}),\hat{T})=\clE$.
Then there is an exact sequence:
\begin{equation*}1\longrightarrow \mathrm{Aut}_{\clB}^{\clA}(\clE)\stackrel{H}{\longrightarrow} \mathrm{Aut}_{\clB}(\clE)
\stackrel{K}{\longrightarrow}\mathrm{Aut}(\clA)\times \mathrm{Aut}(\clB)\stackrel{W}{\longrightarrow} \mathcal{H}^{2}_{nab}(\clA,\clB),\end{equation*}
where $\mathrm{Aut}_{\clB}^{\clA}(\clE)=\{\gamma \in \mathrm{Aut}(\clE)| K(\gamma)=(I_{\clA},I_{\clB}) \}$
and $I_{\clA}=(I_A,I_V),I_{\clB}=(I_B,I_M)$.\end{thm}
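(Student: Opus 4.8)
The plan is to verify exactness at each of the three interior vertices separately, using that $H$ is the inclusion, that $K$ is the group homomorphism supplied by Proposition~\ref{Idc}, and that the class $W(\alpha,\beta)$ is well defined (independent of the chosen section), as already noted via Lemma~\ref{Le1}. Two of the three exactness checks are purely formal, and the third is where the real content sits.

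\textbf{Exactness at $\mathrm{Aut}_{\clB}^{\clA}(\clE)$ and at $\mathrm{Aut}_{\clB}(\clE)$.} First I would observe that the defining condition $K(\gamma)=(I_{\clA},I_{\clB})$ of $\mathrm{Aut}_{\clB}^{\clA}(\clE)$ includes $\gamma|_{\clB}=I_{\clB}$, which already forces $\gamma(\clB)\subseteq\clB$; hence $\mathrm{Aut}_{\clB}^{\clA}(\clE)\subseteq\mathrm{Aut}_{\clB}(\clE)$ and it is precisely $\ker K$. Since $K$ is a homomorphism of groups (Proposition~\ref{Idc}), $\ker K$ is a subgroup and $H$ is simply its inclusion, so $H$ is injective; this is exactness at the left vertex. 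Exactness at $\mathrm{Aut}_{\clB}(\clE)$ is then immediate, because $\mathrm{im}\,H=\mathrm{Aut}_{\clB}^{\clA}(\clE)=\ker K$ by definition.

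\textbf{Exactness at $\mathrm{Aut}(\clA)\times\mathrm{Aut}(\clB)$.} This is the substantive step. I would first make precise that $\mathcal{H}^{2}_{nab}(\clA,\clB)$ is to be treated as a pointed set whose basepoint is the class of the non-abelian $2$-cocycle $(\omega,\varpi,\chi,\mu,\rho_{B},\rho_{M})$ attached to $\mathcal{E}$, so that, by the very formula defining $W$, the value $W(\alpha,\beta)$ is the basepoint exactly when the transported cocycle $(\omega_{(\alpha,\beta)},\varpi_{(\alpha,\beta)},\chi_{(\alpha,\beta)},\mu_{(\alpha,\beta)},\rho_{(B,(\alpha,\beta))},\rho_{(M,(\alpha,\beta))})$ is equivalent to $(\omega,\varpi,\chi,\mu,\rho_{B},\rho_{M})$. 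With this reading, I would chain three equivalences: a pair $(\alpha,\beta)$ lies in $\mathrm{im}\,K$ if and only if it is inducible (this is exactly the definition of inducible, namely being an image of $K$); by Theorem~\ref{Eth1}, $(\alpha,\beta)$ is inducible if and only if the two non-abelian $2$-cocycles above are equivalent; and that equivalence holds if and only if $W(\alpha,\beta)$ is the basepoint, i.e. $(\alpha,\beta)\in W^{-1}(0)$. Composing yields $\mathrm{im}\,K=W^{-1}(0)$, which is the asserted exactness.

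The main obstacle is not any computation---all analytic content has already been absorbed into Proposition~\ref{Idc} and Theorem~\ref{Eth1}---but rather the bookkeeping surrounding the target of $W$. One must fix the correct basepoint in the pointed set $\mathcal{H}^{2}_{nab}(\clA,\clB)$ and confirm that the triviality of $W(\alpha,\beta)$ is literally the statement that the transported cocycle and the original cocycle determine the same class in $\mathcal{H}^{2}_{nab}(\clA,\clB)$, interpreting the formal ``difference'' in the definition of $W$ accordingly. Once this identification is in place, exactness at the last interior vertex is exactly Theorem~\ref{Eth1}, while the two leftmost exactness statements are formal consequences of $\mathrm{Aut}_{\clB}^{\clA}(\clE)$ being defined as $\ker K$.
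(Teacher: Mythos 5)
Your proposal is correct and follows essentially the same route as the paper: the two left-hand exactness statements are formal consequences of $\mathrm{Aut}_{\clB}^{\clA}(\clE)=\ker K$ with $H$ the inclusion, and exactness at $\mathrm{Aut}(\clA)\times\mathrm{Aut}(\clB)$ is obtained by combining the definition of inducibility (being in $\mathrm{im}\,K$) with Theorem~\ref{Eth1}. Your extra care about reading $\mathcal{H}^{2}_{nab}(\clA,\clB)$ as a pointed set and interpreting the formal difference in the definition of $W$ is a sensible clarification that the paper leaves implicit, but it does not change the argument.
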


\begin{proof} Obviously, $\mathrm{Ker} K=\mathrm{Im}H$ and $H$ is injective. We only need to prove that $\mathrm{Ker} W=\mathrm{Im}K$.
By Theorem \ref{Eth1}, for all $(\alpha,\beta)\in \mathrm{Ker} W$, we get that $(\alpha,\beta)$ is inducible
 with respect to the non-abelian extension $\mathcal{E}$, 
which induces that $(\alpha,\beta)\in \mathrm{Im}K$. On the other hand, for any $(\alpha,\beta)\in \mathrm{Im}K$, there is an isomorphism
$\gamma\in \mathrm{Aut}_{\clB}(\clE)$, such that
 Eq.~(\ref{Ik}) holds. Combining Theorem \ref{Eth1}, $(\alpha,\beta)\in \mathrm{Ker} W$, 
 thus, $\mathrm{Im}K\subseteq \mathrm{Ker} W$. In all, $\mathrm{Ker} W=\mathrm{Im}K$.
\end{proof}

Let $((A, [ \ , \ ]_{A}),(V,\rho),T)=\clA$ and $((B, [ \ , \ ]_{B}),(M,\nu_M),S)=\clB$
be two relative Rota-Baxter Lie algebras.
Suppose that $0\longrightarrow\clB\stackrel{(\mathfrak i,i)}{\longrightarrow}
\clE\stackrel{(\mathfrak p,p)}{\longrightarrow}\clA
\longrightarrow0$ is a non-abelian extension of $\clA$ by
$\clB$ with a section $(\mathfrak s,s)$ of $(\mathfrak p,p)$ and 
$(\omega,\varpi,\chi,\mu,\rho_{B},\rho_M)$ is the corresponding non-abelian 2-cocycle
induced by $(\mathfrak s,s)$, where $\clE=((\hat{A}, [ \ , \ ]_{\hat{A}}),(\hat{V},\hat{\rho}),\hat{T})$.

Denote
\begin{align}
		\mathcal{Z}_{nab}^{1}(\clA,\clB)=&\left\{(\zeta,\eta),~\zeta:A\rightarrow B,~\eta:V\rightarrow M \left|\begin{aligned}&
\rho_{B}(y)\zeta(x)+\zeta([x,y]_A)=\rho_{B}(x)\zeta(y)+[\zeta(x),\zeta(y)]_B,\\&
[\zeta(x),a]_{B}=0,~~\nu_{M}(\zeta(x))m=0,~~\nu_{M}(a)\eta(v)=0,\\&
\nu_{M}(\zeta(x))\eta(v)+\rho_{M}(x)\eta (v)=\eta (\rho(x)v)+\mu(v)\zeta(x),\\&
S\eta(v)=\zeta T(v),~~\forall~x,y\in A,a\in B,v\in V,m\in M.
     \end{aligned}\right.\right\}.\label{W5}
	\end{align}
It is easy to check that $\mathcal{Z}_{nab}^{1}(\clA,\clB)$ is an abelian group, which is called a non-abelian 1-cocycle on $\clA$ with values in $\clB$.
  \begin{pro} \label{Wm4} With the above notations, we have
\begin{enumerate}[label=$(\roman*)$,leftmargin=15pt]
  \item The linear map $\lambda:\mathrm{Ker} K\longrightarrow \mathcal{Z}_{nab}^{1}(\clA,\clB)$ defined by 
  $\lambda(\gamma)=(\lambda(\gamma_1),\lambda(\gamma_2))$ is
a homomorphism of groups, where
 \begin{equation}\label{W6}\lambda(\gamma_1)(x)=\zeta_{\gamma_1}(x)=\gamma_1 \mathfrak s(x)-\mathfrak s(x),~~
 \lambda(\gamma_2)(v)=\eta_{\gamma_2}(v)=\gamma_2 s(v)-s(v)\end{equation}
 for all $\gamma=(\gamma_1,\gamma_2)\in \mathrm{Ker} K,~x \in A,~v \in V$, where $K$ is defined in Proposition \ref{Idc}.
\item $\lambda$ is an isomorphism, that is, $\mathrm{Ker K}\simeq \mathcal{Z}_{nab}^{1}(\clA,\clB)$.
\end{enumerate}
\end{pro}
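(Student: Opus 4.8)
The plan is to reduce everything to Theorem~\ref{EC} specialised to the pair $(I_{\clA},I_{\clB})=((I_A,I_V),(I_B,I_M))$. The key observation is that the six defining relations of $\mathcal{Z}_{nab}^{1}(\clA,\clB)$ in \eqref{W5} are exactly Eqs.~\eqref{Iam1}--\eqref{Iam6} evaluated at $\alpha_1=I_A,\alpha_2=I_V,\beta_1=I_B,\beta_2=I_M$: under these substitutions every left-hand side of \eqref{Iam1}--\eqref{Iam6} vanishes (for instance $\beta_1\omega(x,y)-\omega(\alpha_1(x),\alpha_1(y))=0$), and the surviving right-hand sides are precisely the cocycle conditions of \eqref{W5}. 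This dictionary will be used in both directions.

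For part $(i)$ I would first note that $\lambda$ is well defined: if $\gamma=(\gamma_1,\gamma_2)\in\mathrm{Ker}\,K$ then $\mathfrak p\gamma_1\mathfrak s=I_A$ and $p\gamma_2 s=I_V$, so $\mathfrak p(\gamma_1\mathfrak s(x)-\mathfrak s(x))=0$ and $p(\gamma_2 s(v)-s(v))=0$, whence $\zeta_{\gamma_1}(x)\in\ker\mathfrak p\cong B$ and $\eta_{\gamma_2}(v)\in\ker p\cong M$. Since $\gamma$ induces $(I_{\clA},I_{\clB})$, the forward direction of Theorem~\ref{EC} (whose $\zeta,\eta$ of \eqref{Iam7} coincide with $\zeta_{\gamma_1},\eta_{\gamma_2}$ of \eqref{W6} once $\alpha=I_{\clA}$) shows that $(\zeta_{\gamma_1},\eta_{\gamma_2})$ satisfies \eqref{Iam1}--\eqref{Iam6}, hence lies in $\mathcal{Z}_{nab}^{1}(\clA,\clB)$ by the dictionary above. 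For the homomorphism property I would compute $\lambda(\gamma\gamma')$ directly: writing $\gamma'_1\mathfrak s(x)=\mathfrak s(x)+\zeta_{\gamma'_1}(x)$ with $\zeta_{\gamma'_1}(x)\in B$ and using the crucial fact $\gamma_1|_B=I_B$, I obtain $\gamma_1\gamma'_1\mathfrak s(x)=\gamma_1\mathfrak s(x)+\zeta_{\gamma'_1}(x)=\mathfrak s(x)+\zeta_{\gamma_1}(x)+\zeta_{\gamma'_1}(x)$, so that $\lambda((\gamma\gamma')_1)=\lambda(\gamma_1)+\lambda(\gamma'_1)$; the identical argument on $M$ using $\gamma_2|_M=I_M$ yields the $\eta$-component, confirming that $\lambda$ sends composition to addition.

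For part $(ii)$, injectivity is immediate: if $\lambda(\gamma)=(0,0)$ then $\gamma_1\mathfrak s=\mathfrak s$ and $\gamma_2 s=s$, and since $\gamma_1|_B=I_B$, $\gamma_2|_M=I_M$, while every element of $\hat A$ (resp. $\hat V$) has the form $a+\mathfrak s(x)$ (resp. $m+s(v)$), we get $\gamma_1=I_{\hat A}$ and $\gamma_2=I_{\hat V}$, i.e. $\gamma$ is the identity. For surjectivity I would take any $(\zeta,\eta)\in\mathcal{Z}_{nab}^{1}(\clA,\clB)$ and build $\gamma_1,\gamma_2$ via the formulas \eqref{Iam12}--\eqref{Iam13} with $\alpha=I_{\clA}$, $\beta=I_{\clB}$, namely $\gamma_1(a+\mathfrak s(x))=a+\zeta(x)+\mathfrak s(x)$ and $\gamma_2(m+s(v))=m+\eta(v)+s(v)$. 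By the dictionary $(\zeta,\eta)$ satisfies \eqref{Iam1}--\eqref{Iam6}, so the converse direction of Theorem~\ref{EC} guarantees $\gamma=(\gamma_1,\gamma_2)\in\mathrm{Aut}(\clE)$ with $\gamma|_{\clB}=I_{\clB}$ and $\bar\gamma=I_{\clA}$, i.e. $\gamma\in\mathrm{Ker}\,K$; and $\lambda(\gamma)=(\zeta,\eta)$ by construction.

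The routine content is already packaged inside Theorem~\ref{EC}, so the only point requiring genuine care is the homomorphism property: composition in the group $\mathrm{Ker}\,K$ must be carried to the additive structure of the abelian group $\mathcal{Z}_{nab}^{1}(\clA,\clB)$, and the fact that makes this linearisation go through is precisely $\gamma_1|_B=I_B$ and $\gamma_2|_M=I_M$. As a by-product this shows $\mathrm{Ker}\,K$ is abelian, consistent with its being isomorphic to $\mathcal{Z}_{nab}^{1}(\clA,\clB)$.
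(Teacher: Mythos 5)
Your proposal is correct and follows essentially the same route as the paper: the homomorphism property via $\gamma_1|_B=I_B$, $\gamma_2|_M=I_M$, the injectivity argument, and the surjectivity via the construction $\gamma_1(a+\mathfrak s(x))=a+\zeta(x)+\mathfrak s(x)$ together with the converse part of Theorem~\ref{EC} are exactly what the paper does. The only (cosmetic) difference is that you obtain well-definedness of $\lambda$ by specialising Theorem~\ref{EC} to $(\alpha,\beta)=(I_{\clA},I_{\clB})$ — your ``dictionary'' between \eqref{W5} and Eqs.~\eqref{Iam1}--\eqref{Iam6} is accurate — whereas the paper reruns the corresponding computation directly.
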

\begin{proof}
\begin{enumerate}[label=$(\roman*)$,leftmargin=15pt]
	\item
	By Eqs.~(\ref{C1})-(\ref{C3}) and (\ref{W6}), for all $x\in A,v\in V$, we have,
\begin{align*}&\nu_{M}(\zeta(x))\eta(v)+\rho_{M}(x)\eta (v)-\eta (\rho(x)v)-\mu(v)\zeta(x)
\\=&\hat{\rho}(\zeta(x))\eta(v)+\hat{\rho}(\mathfrak s(x))\eta(v)-\gamma_2 s(\rho(x)v)+s(\rho(x)v)
-\hat{\rho}(\zeta(x))s(v)
\\=&\hat{\rho}(\gamma_1 \mathfrak s(x))\eta(v)-\gamma_{2}s(\rho(x)v)+s(\rho(x)v)+\hat{\rho}(\gamma_1 \mathfrak s(x))s(v)-\hat{\rho}(\mathfrak s(x))s(v)
\\=&\hat{\rho}(\gamma_1 \mathfrak s(x))\gamma_{2}s(v)-\gamma_{2}s(\rho(x)v)-\varpi(x,v)
\\=&\gamma_{2}\hat{\rho}(\mathfrak s(x))s(v)-\gamma_{2}s(\rho(x)v)-\varpi(x,v)
\\=&\gamma_2\varpi(x,v)-\varpi(x,v)\\=&0.
\end{align*}
By the same token, we can prove that $\zeta_{\gamma_1},\eta_{\gamma_2}$ satisfy the other identities in $ \mathcal{Z}_{nab}^{1}(\clA,\clB)$.
Thus, $\lambda$ is well-defined.
For any $\gamma,\gamma'\in \mathrm{Ker} K$ and $x\in A$, suppose $\lambda(\gamma)=(\zeta_{\gamma_1},\eta_{\gamma_2})$ 
and $\lambda(\gamma')=(\zeta_{\gamma^{'}_{1}},\eta_{\gamma^{'}_2})$ .
Using Eq.~(\ref{W6}), we get
\begin{align*}\lambda(\gamma_1 \gamma^{'}_1)(x)&=\gamma_1 \gamma^{'}_{1}\mathfrak s(x)-\mathfrak s(x)
\\&=\gamma_1(\zeta_{\gamma'_1}(x)+\mathfrak s(x))-\mathfrak s(x)
\\&=\gamma_{1}\zeta_{\gamma'_1}(x)+\gamma_1 \mathfrak s(x)-\mathfrak s(x)
\\&=\zeta_{\gamma_1}(x)+\zeta_{\gamma'_1}(x)
\\&=\lambda(\gamma_1)(x)+\lambda(\gamma'_1)(x).\end{align*}
Take the same procedure, one can verify that 
\begin{equation*}\lambda(\gamma_2 \gamma^{'}_2)(v)=\lambda(\gamma_2)(v)+\lambda(\gamma'_2)(v).\end{equation*}
Thus, $\lambda(\gamma \gamma')=\lambda(\gamma)+\lambda(\gamma')$, that is, $\lambda$ is a homomorphism of groups.
\item For all $\gamma=(\gamma_1,\gamma_2)\in \mathrm{Ker}K$, we obtain that $K(\gamma_1)=(p\gamma_1 \mathfrak s,\gamma_{1}|_{B})=(I_A,I_B)$.
 If $\lambda(\gamma_1)=\zeta_{\gamma_1}=0$,
we can get $\zeta_{\gamma_1}(x)=\gamma_1 \mathfrak s(x)-\mathfrak s(x)=0$, that is, $\gamma_1=I_{\hat{A}}$. 
Similarly, $\gamma_2=I_{\hat{V}}$. Thus, $\lambda$ is injective.
Secondly, we prove that $\lambda$ is surjective. Since $(\mathfrak s,s)$ is a section of $(\mathfrak p,p)$, 
all $\hat{x}\in \hat{A},\hat{v}\in \hat{V}$ can be written as $a+\mathfrak s(x)$ and $m+ s(v)$ respectively for
 some $a\in B, x\in A, m\in M, v\in V$.
 For any $\zeta,\eta\in \mathcal{Z}_{nab}^{1}(\clA,\clB)$, define linear maps $\gamma_1:\hat{A}\rightarrow \hat{V}$ 
 and $\gamma_2:\hat{V}\rightarrow \hat{V}$ respectively by
  \begin{equation}\label{W7}\gamma_1(\hat{x})=\gamma_1(a+\mathfrak s(x))=\mathfrak s(x)+\zeta(x)+a,~\forall~\hat{x}\in \hat{A},\end{equation}
  \begin{equation}\label{W8}\gamma_2(\hat{v})=\gamma_1(m+ s(v))= s(v)+\eta(v)+m,~\forall~\hat{v}\in \hat{V}.\end{equation}
It is obviously that $(\mathfrak p\gamma_1 \mathfrak s,\gamma_{1}|_{B})=(I_A,I_B)$ and
 $( p\gamma_2 s,\gamma_{2}|_{M})=(I_V,I_M)$.
We need to verify that $\gamma=(\gamma_1,\gamma_2) $ is an automorphism of relative Rota-Baxter algebra $\clE$.
The proof follows the same argument as the converse part of Theorem \ref{EC}.
 It follows that $\gamma=(\gamma_1,\gamma_2) \in \mathrm{Ker} K$.
Thus, $\lambda$ is surjective. In all, $\lambda$ is bijective.
 So $\mathrm{Ker }K\simeq \mathcal{Z}_{nab}^{1}(\clA,\clB)$.
 \end{enumerate}
\end{proof}

\begin{thm} \label{Wm5} Suppose that $((A, [ \ , \ ]_{A}),(V,\rho),T)=\clA$ and
 $((B, [ \ , \ ]_{B}),(M,\nu_M),S)=\clB$ 
are two relative Rota-Baxter Lie algebras. Let
$\mathcal{E}:0\longrightarrow\clB\stackrel{(\mathfrak i,i)}{\longrightarrow}
\clE\stackrel{(\mathfrak p,p)}{\longrightarrow}\clA\longrightarrow0$
be a non-abelian extension of $\clA$ by $\clB$,
 where $\clE=((\hat{A}, [ \ , \ ]_{\hat{A}}),(\hat{V},\hat{\rho}),\hat{T})$. There is an exact sequence:
  \begin{equation*}0\longrightarrow \mathcal{Z}_{nab}^{1}(\clA,\clB)\stackrel{i}{\longrightarrow} \mathrm{Aut}_{\clB}(\clE)\stackrel{K}{\longrightarrow}\mathrm{Aut}(\clA)
  \times \mathrm{Aut}(\clB)\stackrel{W}{\longrightarrow} \mathcal{H}^{2}_{nab}(\clA,\clB).\end{equation*}
\end{thm}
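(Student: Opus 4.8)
The plan is to assemble this sequence by splicing together the Wells sequence of Theorem~\ref{Wm3} with the identification of $\mathrm{Ker}\,K$ furnished by Proposition~\ref{Wm4}, so that essentially no new computation is needed. Theorem~\ref{Wm3} already supplies the exact sequence
\[
1\longrightarrow \mathrm{Aut}_{\clB}^{\clA}(\clE)\stackrel{H}{\longrightarrow} \mathrm{Aut}_{\clB}(\clE)\stackrel{K}{\longrightarrow}\mathrm{Aut}(\clA)\times \mathrm{Aut}(\clB)\stackrel{W}{\longrightarrow} \mathcal{H}^{2}_{nab}(\clA,\clB),
\]
in which $\mathrm{Aut}_{\clB}^{\clA}(\clE)=\mathrm{Ker}\,K$ and $H$ is the inclusion. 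The three rightmost terms, together with the maps $K$ and $W$ and the exactness at them, are therefore already available; what I would do is replace the leftmost nontrivial term $\mathrm{Aut}_{\clB}^{\clA}(\clE)$ by the isomorphic group $\mathcal{Z}_{nab}^{1}(\clA,\clB)$ and check exactness at the two new left-hand positions.

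First I would invoke Proposition~\ref{Wm4}, by which $\lambda:\mathrm{Ker}\,K\longrightarrow \mathcal{Z}_{nab}^{1}(\clA,\clB)$ is an isomorphism of groups; in particular $\mathrm{Ker}\,K$ is abelian, which justifies writing $0$ at the left end. I would then set $i:=H\circ\lambda^{-1}$, so that $i$ sends a non-abelian $1$-cocycle $(\zeta,\eta)$ to the automorphism $\gamma=(\gamma_1,\gamma_2)$ defined by Eqs.~(\ref{W7})--(\ref{W8}); this is exactly the automorphism built in the surjectivity argument of Proposition~\ref{Wm4}, and it lies in $\mathrm{Aut}_{\clB}(\clE)$ with $K(\gamma)=(I_{\clA},I_{\clB})$.

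Exactness would then follow formally. At $\mathcal{Z}_{nab}^{1}(\clA,\clB)$ the map $i$ is injective because $\lambda^{-1}$ is bijective and $H$ is injective. At $\mathrm{Aut}_{\clB}(\clE)$ one has $\mathrm{Im}\,i=\mathrm{Im}\,H=\mathrm{Aut}_{\clB}^{\clA}(\clE)=\mathrm{Ker}\,K$, since $\lambda^{-1}$ carries $\mathcal{Z}_{nab}^{1}(\clA,\clB)$ bijectively onto $\mathrm{Ker}\,K$. Exactness at $\mathrm{Aut}(\clA)\times\mathrm{Aut}(\clB)$, namely $\mathrm{Im}\,K=\mathrm{Ker}\,W$, is inherited verbatim from Theorem~\ref{Wm3}. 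The only point that needs genuine care is confirming that the substitution is compatible with $H$, i.e. that $i=H\circ\lambda^{-1}$ has image precisely $\mathrm{Ker}\,K$ rather than a proper subgroup; this is exactly what part (ii) of Proposition~\ref{Wm4} guarantees, so I do not expect any real obstacle beyond this bookkeeping.
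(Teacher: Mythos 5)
Your proposal is correct and follows exactly the paper's route: the paper derives Theorem~\ref{Wm5} precisely by combining the exact sequence of Theorem~\ref{Wm3} with the isomorphism $\lambda:\mathrm{Ker}\,K\simeq \mathcal{Z}_{nab}^{1}(\clA,\clB)$ from Proposition~\ref{Wm4}. Your write-up is in fact more explicit than the paper's one-line proof, since you spell out $i=H\circ\lambda^{-1}$ and verify exactness at each position.
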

\begin{proof}
The statement obtained by Theorem \ref{Wm3} and Proposition \ref{Wm4}.
\end{proof}

In the case of abelian extensions, $\mathcal{Z}^{1}_{nab}(\clA,\clB)$
defined by (\ref{W5}) becomes to $\mathcal{Z}^{1}(\clA,\clB)$ defined by Eq.~(\ref{Cy1}).
 In the light of Theorem \ref{Wm5} and Theorem \ref{Wm6}, we get 

\begin{thm} Let $\mathcal{E}:0\longrightarrow\clB\stackrel{(\mathfrak i,i)}{\longrightarrow} \clE
\stackrel{(\mathfrak p,p)}{\longrightarrow}\clA\longrightarrow0$ be an abelian extension of $\clA$
by $\clB$. There is an exact sequence:
$$0\longrightarrow \mathcal{Z}^{1}(\clA,\clB)\stackrel{i}{\longrightarrow} \mathrm{Aut}_{\clB}(\clE)
\stackrel{K}{\longrightarrow}C_{(\mu,\rho_{B},\rho_{M})}\stackrel{W}{\longrightarrow} \mathcal{H}^{2}(\clA,\clB).$$
\end{thm}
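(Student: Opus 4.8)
The plan is to derive the sequence as the abelian specialization of the non-abelian Wells sequence in Theorem \ref{Wm5}, invoking the simplifications that occur when $\clB$ is abelian, so that $[\ ,\ ]_B=0$ and $\nu_M=0$. First I would record the two identifications of the outer terms. The relations defining $\mathcal{Z}^1_{nab}(\clA,\clB)$ in (\ref{W5}) collapse onto those defining $\mathcal{Z}^1(\clA,\clB)$ in (\ref{Cy1}): the three conditions $[\zeta(x),a]_B=0$, $\nu_M(\zeta(x))m=0$ and $\nu_M(a)\eta(v)=0$ become vacuous, while the remaining three reduce precisely to the cocycle conditions $\rho_B(x)\zeta(y)-\rho_B(y)\zeta(x)=\zeta([x,y]_A)$, $\mu(v)\zeta(x)+\eta(\rho(x)v)=\rho_M(x)\eta(v)$ and $\zeta(Tv)=S\eta(v)$. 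Hence $\mathcal{Z}^1_{nab}(\clA,\clB)=\mathcal{Z}^1(\clA,\clB)$, as already noted before the statement. Together with Remark \ref{Rk1}, which gives $\mathcal{H}^2_{nab}(\clA,\clB)=\mathcal{H}^2(\clA,\clB)$, this turns the first and last terms of the sequence of Theorem \ref{Wm5} into those of the present sequence.

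Next I would narrow the codomain of $K$ from $\mathrm{Aut}(\clA)\times\mathrm{Aut}(\clB)$ to $C_{(\mu,\rho_B,\rho_M)}$. Every element of $\mathrm{Im}(K)$ is an inducible pair, and applying the Corollary that specializes Theorem \ref{EC} to the abelian case, the right-hand sides of (\ref{Iam2})--(\ref{Iam4}) vanish (again by $[\ ,\ ]_B=0$, $\nu_M=0$), so these become exactly the relations (\ref{AEE4})--(\ref{AEE6}) that define $C_{(\mu,\rho_B,\rho_M)}$; since they do not involve $\zeta,\eta$, they hold for any inducible pair. Thus $\mathrm{Im}(K)\subseteq C_{(\mu,\rho_B,\rho_M)}$ and $K$ factors through $C_{(\mu,\rho_B,\rho_M)}$ as required.

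With these reductions in hand, exactness at the first two spots is inherited directly from Theorem \ref{Wm5} and Proposition \ref{Wm4}: the isomorphism $\lambda:\mathrm{Ker}\,K\xrightarrow{\sim}\mathcal{Z}^1_{nab}(\clA,\clB)=\mathcal{Z}^1(\clA,\clB)$ shows that $i$ is injective and that $\mathrm{Im}(i)=\mathrm{Ker}(K)$. The only place where the restricted codomain matters is exactness at $C_{(\mu,\rho_B,\rho_M)}$, and here I would invoke Theorem \ref{Wm6}: for $(\alpha,\beta)\in C_{(\mu,\rho_B,\rho_M)}$, the pair is inducible, that is, lies in $\mathrm{Im}(K)$, if and only if $(\omega,\varpi,\chi)$ and $(\omega_{(\alpha,\beta)},\varpi_{(\alpha,\beta)},\chi_{(\alpha,\beta)})$ are cohomologous, which by the definition (\ref{W1}) of $W$ is precisely $W(\alpha,\beta)=0$. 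Combined with $\mathrm{Im}(K)\subseteq C_{(\mu,\rho_B,\rho_M)}$, this yields $\mathrm{Im}(K)=\mathrm{Ker}(W)$ inside $C_{(\mu,\rho_B,\rho_M)}$, completing exactness.

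The main obstacle is conceptual rather than computational: one must verify that restricting the codomain of $K$ to $C_{(\mu,\rho_B,\rho_M)}$ does not disturb exactness. This rests on the observation that in the abelian setting the compatibility relations (\ref{AEE4})--(\ref{AEE6}) are automatically forced on any inducible pair, so that both $\mathrm{Im}(K)$ and $\mathrm{Ker}(W)$ already live inside $C_{(\mu,\rho_B,\rho_M)}$ and Theorem \ref{Wm6} applies without loss. Everything else is a transcription of the non-abelian arguments under $[\ ,\ ]_B=0$ and $\nu_M=0$.
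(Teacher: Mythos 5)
Your proposal follows essentially the same route as the paper: the paper's own proof simply notes that in the abelian case $\mathcal{Z}^{1}_{nab}(\clA,\clB)$ reduces to $\mathcal{Z}^{1}(\clA,\clB)$ and then cites Theorem \ref{Wm5} together with Theorem \ref{Wm6}. Your write-up fills in the details the paper leaves implicit (in particular why $\mathrm{Im}(K)\subseteq C_{(\mu,\rho_{B},\rho_{M})}$ via Eqs.~(\ref{AEE4})--(\ref{AEE6}), so that restricting the codomain preserves exactness), and is correct.
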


\section{Inducibility of pairs of derivations and Wells type exact sequences}
In this section, we address the problem of the inducibility of pairs of derivations and develop Wells type exact sequences
under the context of abelian extensions of relative Rota-Baxter Lie algebras

\begin{lem} Suppose that $((A, [ \ , \ ]_{A}),(V,\rho),T)=\clA$ is a relative Rota-Baxter Lie algebra. 
and $(M\xrightarrow{S}B,\rho_{B}, \rho_{M}, \mu)=\clB$ is a representation of $\clA$.
Let $(d_{\clA},d_{\clB})\in \mathrm{Der}(\clA)\times \mathrm{End}(\clB)$. Then 
$(d_{\clA},d_{\clB})\in \mathrm{Der}(\clA\ltimes \clB)$ if and only if 
for all $x\in A,a\in B,m \in M,v \in V$ it holds that
\begin{equation}\label{Bd1}d_{M}(\rho_{M}(x)m)=\rho_{M}(x)d_{M}(m)+\rho_{M}(d_{A}x)m,
\end{equation}
\begin{equation}\label{Bd2}d_{B}(\rho_{B}(x)a)=\rho_{B}(x)d_{B}(a)+\rho_{B}
(d_{A}x)a,
\end{equation}
\begin{equation}\label{Bd3}d_{M}(\mu(v)a)=\mu(d_{V}v)a+\mu(v)d_{B}(a),
\end{equation}
\begin{equation}\label{Bd4}d_{B}S=Sd_{M},
\end{equation}
where $d_{\clA}=(d_A,d_V)$ and $d_{\clB}=(d_B,d_M)$. 
\end{lem}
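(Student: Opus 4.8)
The plan is to realize the asserted derivation of $\clA\ltimes\clB$ as the pair of block-diagonal maps $(d_A\oplus d_B,\,d_V\oplus d_M)$ acting on the underlying spaces $A\oplus B$ and $V\oplus M$, and then to unwind the three defining conditions of a derivation of a relative Rota-Baxter Lie algebra against the explicit semidirect-product structure $[x+a,y+b]_{\ltimes}=[x,y]_A+\rho_B(x)b-\rho_B(y)a$, $\rho_{\ltimes}(x+a)(v+m)=\rho(x)v+\rho_M(x)m-\mu(v)a$ and $T_{\ltimes}(v+m)=T(v)+S(m)$. The guiding observation is that the three conditions packaged in $d_\clA\in\mathrm{Der}(\clA)$ — that $d_A$ is a Lie derivation of $A$, together with \eqref{D1} and \eqref{D2} for $\clA$ — account precisely for the components that land purely in $A$ or in $V$; hence the genuinely new content will be isolated in the $B$- and $M$-valued components and will turn out to be exactly \eqref{Bd1}--\eqref{Bd4}.

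First I would treat the Lie-derivation condition on $(A\oplus B,[\ ,\ ]_{\ltimes})$. Expanding $(d_A\oplus d_B)[x+a,y+b]_{\ltimes}$ and comparing with $[(d_A\oplus d_B)(x+a),y+b]_{\ltimes}+[x+a,(d_A\oplus d_B)(y+b)]_{\ltimes}$, the $A$-components agree because $d_A$ is a derivation of $A$, while the $B$-components collapse (after using the antisymmetry in $a,b$) to the single requirement $d_B(\rho_B(x)a)=\rho_B(x)d_B(a)+\rho_B(d_Ax)a$, which is \eqref{Bd2}. Next I would impose \eqref{D1} for $\clA\ltimes\clB$, namely $T_{\ltimes}(d_V\oplus d_M)=(d_A\oplus d_B)T_{\ltimes}$: evaluating on $v+m$ splits into the $A$-part $Td_Vv=d_ATv$, already guaranteed by \eqref{D1} for $\clA$, and the $B$-part $Sd_Mm=d_BSm$, which is exactly \eqref{Bd4}. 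Finally I would impose \eqref{D2} for the semidirect product, applying $d_V\oplus d_M$ to $\rho_{\ltimes}(x+a)(v+m)$ and comparing with $\rho_{\ltimes}(x+a)(d_V\oplus d_M)(v+m)+\rho_{\ltimes}((d_A\oplus d_B)(x+a))(v+m)$; the $V$-component reproduces \eqref{D2} for $\clA$ and is automatic, while the $M$-component separates, by the independence of the arguments $m$ and $a$, into $d_M(\rho_M(x)m)=\rho_M(x)d_M(m)+\rho_M(d_Ax)m$ and $d_M(\mu(v)a)=\mu(d_Vv)a+\mu(v)d_B(a)$, i.e. \eqref{Bd1} and \eqref{Bd3}. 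Reading each of these equivalences in both directions yields the stated biconditional.

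There is no conceptual obstacle here, since the whole statement reduces to bookkeeping; the only point demanding care is the consistent projection of each identity onto its $A/B$ (resp.\ $V/M$) components and the recognition that the ``diagonal'' pieces are precisely those absorbed by the standing hypothesis $d_\clA\in\mathrm{Der}(\clA)$. Once that splitting is carried out, matching the coefficients of the independent variables $a,b,m$ delivers \eqref{Bd1}--\eqref{Bd4} with no further input.
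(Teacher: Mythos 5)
Your proposal is correct and coincides with the paper's (unelaborated) proof, which simply states that the lemma ``can be verified by direct computations'': you carry out exactly that computation, projecting the Lie-derivation condition, condition \eqref{D1} and condition \eqref{D2} for $\clA\ltimes\clB$ onto their $A/B$ and $V/M$ components and observing that the $A$- and $V$-components are absorbed by the hypothesis $d_{\clA}\in\mathrm{Der}(\clA)$ while the $B$- and $M$-components, after separating the independent arguments, are precisely \eqref{Bd1}--\eqref{Bd4}. No gap.
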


\begin{proof}
It can be verified by direct computations.
\end{proof}

Consider the space \begin{equation}\label{La}\mathfrak{g}(\clA,\clB)=\{(d_{\clA},d_{\clB})\in \mathrm{Der}(\clA)\times \mathrm{End}(\clB)|
(d_{\clA},d_{\clB}) ~~satisfies~~ (\ref{Bd1})-(\ref{Bd4})\}.
\end{equation}
It is clearly that $\mathfrak{g}(\clA,\clB)$ is a subspace of $\mathrm{Der}(\clA)\times \mathrm{End}(\clB)$. Moreover,

\begin{lem} $\mathfrak{g}(\clA,\clB)$ is a Lie subalgebra of $\mathrm{Der}(\clA\ltimes\clB)$, where the Lie bracket 
of $\mathfrak{g}(\clA,\clB)$ is given by 
\begin{equation*}[(d_{\clA},d_{\clB}),(d_{\clA}^{'},d_{\clB}^{'})]=([d_{\clA},d_{\clA}^{'}],[d_{\clB},d_{\clB}^{'}])\end{equation*}
with
\begin{equation*}[d_{\clA},d_{\clA}^{'}]=([d_A,d^{'}_A],[d_V,d^{'}_{V}]),~~
[d_{\clB},d_{\clB}^{'}]=([d_B,d^{'}_B],[d_M,d^{'}_{M}])\end{equation*}
for all $d_{\clA}=(d_A,d_V),d_{\clA}^{'}=(d^{'}_A,d^{'}_{V})\in \mathrm{Der}(\clA)$ and
 $d_{\clB}=(d_B,d_M),d_{\clB}^{'}=(d^{'}_B,d^{'}_{M})\in \mathrm{End}(\clB)$.
\end{lem}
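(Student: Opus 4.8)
The plan is to identify $\mathfrak{g}(\clA,\clB)$ with a set of derivations of the semidirect product $\clA\ltimes\clB$ and to check that, under this identification, the componentwise bracket in the statement coincides with the commutator bracket of derivations; the subalgebra claim is then immediate. To each $(d_{\clA},d_{\clB})=((d_A,d_V),(d_B,d_M))\in\mathfrak{g}(\clA,\clB)$ I would associate the pair of ``block-diagonal'' linear maps $D_1:=d_A\oplus d_B$ on $A\oplus B$ and $D_2:=d_V\oplus d_M$ on $V\oplus M$, where $D_1(x+a)=d_A(x)+d_B(a)$ and $D_2(v+m)=d_V(v)+d_M(m)$. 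By the preceding lemma, the conditions (\ref{Bd1})--(\ref{Bd4}) are exactly what is needed for $(D_1,D_2)$ to be a derivation of $\clA\ltimes\clB$, so the assignment $\iota(d_{\clA},d_{\clB}):=(D_1,D_2)$ is a well-defined, linear and injective map from $\mathfrak{g}(\clA,\clB)$ into $\mathrm{Der}(\clA\ltimes\clB)$.

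Next I would invoke that $\mathrm{Der}(\clA\ltimes\clB)$ is a Lie algebra under the commutator bracket: $\clA\ltimes\clB$ is a relative Rota-Baxter Lie algebra by the semidirect product Proposition, and the derivations of any such algebra form a Lie algebra, as recorded in Section 2 for $\mathrm{Der}(\clA)$. The heart of the argument is then a formal computation. For $\iota(d_{\clA},d_{\clB})=(D_1,D_2)$ and $\iota(d'_{\clA},d'_{\clB})=(D'_1,D'_2)$, the commutator $[(D_1,D_2),(D'_1,D'_2)]=(D_1D'_1-D'_1D_1,\,D_2D'_2-D'_2D_2)$ again preserves each of the summands $A,B,V,M$, since $D_1,D'_1$ preserve $A$ and $B$ separately while $D_2,D'_2$ preserve $V$ and $M$ separately. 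Restricting to each block gives
\begin{equation*}
[(D_1,D_2),(D'_1,D'_2)]=\big([d_A,d'_A]\oplus[d_B,d'_B],\ [d_V,d'_V]\oplus[d_M,d'_M]\big)=\iota\big([d_{\clA},d'_{\clA}],[d_{\clB},d'_{\clB}]\big),
\end{equation*}
which is precisely $\iota$ applied to the bracket defined in the statement.

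It remains to see that this commutator lies in $\mathrm{Im}\,\iota$, equivalently that $([d_{\clA},d'_{\clA}],[d_{\clB},d'_{\clB}])\in\mathfrak{g}(\clA,\clB)$. Here $[d_{\clA},d'_{\clA}]\in\mathrm{Der}(\clA)$ because $\mathrm{Der}(\clA)$ is closed under the commutator, and $[d_{\clB},d'_{\clB}]\in\mathrm{End}(\clB)$; since the displayed commutator is a derivation of $\clA\ltimes\clB$ (being the commutator of two derivations), the preceding lemma forces $([d_{\clA},d'_{\clA}],[d_{\clB},d'_{\clB}])$ to satisfy (\ref{Bd1})--(\ref{Bd4}), i.e. to lie in $\mathfrak{g}(\clA,\clB)$. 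Consequently $\mathrm{Im}\,\iota$ is closed under the commutator and $\iota$ carries the stated bracket to the commutator bracket, so $\mathfrak{g}(\clA,\clB)$ is a Lie subalgebra of $\mathrm{Der}(\clA\ltimes\clB)$. I expect no genuine obstacle here: the only point demanding care is the bookkeeping that the commutator respects the block-diagonal decomposition and reduces blockwise to the stated componentwise commutators — for instance the compatibility (\ref{Bd4}) for the bracket, $[d_B,d'_B]S=d_BSd'_M-d'_BSd_M=S[d_M,d'_M]$, which one may also verify directly should the cleaner route through the preceding lemma be avoided.
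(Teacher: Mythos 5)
Your proof is correct, but it reaches the closure property by a different route than the paper. The paper's proof is purely computational: it expands $[d_M,d'_M](\mu(v)a)$ using the identity $d_M(\mu(v)a)=\mu(d_Vv)a+\mu(v)d_B(a)$ and checks directly that the result equals $\mu([d_V,d'_V]v)a+\mu(v)[d_B,d'_B](a)$, then asserts that the remaining identities (\ref{Bd1}), (\ref{Bd2}) and (\ref{Bd4}) for the bracket follow analogously; that is, it verifies the defining conditions of $\mathfrak{g}(\clA,\clB)$ for the commutator pair by hand. You instead make the identification $\iota(d_{\clA},d_{\clB})=(d_A\oplus d_B,\,d_V\oplus d_M)$ explicit, observe that the commutator of block-diagonal maps is block-diagonal with blockwise commutators, and then invoke the \emph{only if} direction of the preceding lemma: since $[d_{\clA},d'_{\clA}]\in\mathrm{Der}(\clA)$, $[d_{\clB},d'_{\clB}]\in\mathrm{End}(\clB)$, and the commutator of two derivations of $\clA\ltimes\clB$ is again a derivation, conditions (\ref{Bd1})--(\ref{Bd4}) for the bracket come for free. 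Your argument buys economy --- all four identities are dispatched at once with no computation --- at the cost of leaning on the unproven (but asserted in Section 2) fact that the derivations of a relative Rota-Baxter Lie algebra are closed under the commutator; the paper's computation is self-contained and, if carried out for all four identities, would in effect reprove that closure in this special case. Both are valid; your structural argument is arguably the cleaner one, and your closing direct check of (\ref{Bd4}), namely $[d_B,d'_B]S=d_BSd'_M-d'_BSd_M=S[d_M,d'_M]$, is exactly the kind of verification the paper performs for the $\mu$-identity.
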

\begin{proof}
For all $d_{\clA}=(d_A,d_V),d_{\clA}^{'}=(d^{'}_A,d^{'}_{V})\in \mathrm{Der}(\clA),~
d_{\clB}=(d_B,d_M),d_{\clB}^{'}=(d^{'}_B,d^{'}_{M})\in \mathrm{End}(\clB)$ and $a\in B,v\in V$, 
using Eq.~(\ref{Bd2}), we get
 \begin{align*}[d_M,d_{M}^{'}](\mu(v)a)=&d_Md_{M}^{'}(\mu(v)a)-d_{M}^{'}d_{M}(\mu(v)a)
\\=&d_{M}(\mu(d_{V}^{'}(v))a+\mu(v)d^{'}_{B}(a))-d_{M}^{'}(\mu(d_{V}(v))a+\mu(v)d_{B}(a))
\\=&\mu(d_{V}d_{V}^{'}(v))a+\mu(d_{V}^{'}(v))d_{B}(a)+\mu(d_{V}(v))d^{'}_{B}(a)+\mu(v)(d_{B}d_{B}^{'}(a))
\\&-\mu(d_{V}^{'}d_{V}(v))a-\mu(d_{V}(v))d_{B}^{'}(a)-\mu(d_{V}^{'}(v))d_{B}(a)-\mu(v)(d_{B}^{'}d_{B}(a))
\\=&\mu([d_{V},d_{V}^{'}]v)a+\mu(v)[d_{B},d_{B}^{'}](a).
\end{align*}
 Analogously, \begin{equation*}[d_{M},d_{M}^{'}](\rho_{M}(x)m)=\rho_{M}(x)[d_{M},d_{M}^{'}](m)+\rho_{M}([d_{A},d_{A}^{'}]x)m,
\end{equation*}
\begin{equation*}[d_{M},d_{M}^{'}](\mu(v)a)=\mu([d_{V},d_{V}^{'}]v)a+\mu(v)[d_{B},d_{B}^{'}](a),
\end{equation*}
\begin{equation*}[d_{B},d_{B}^{'}]S=S[d_{M},d_{M}^{'}].
\end{equation*}
 Thus, $\mathfrak{g}(\clA,\clB)$ is a Lie subalgebra of $\mathrm{Der}(\clA\ltimes\clB)$.
\end{proof}

Define a map $\Delta_{(d_{\clA},d_{\clB})}:\mathcal{C}^{2}(\clA,\clB)\longrightarrow \mathcal{C}^{2}(\clA,\clB)$ by
\begin{align*}&\Delta_{(d_{\clA},d_{\clB})}(f_B,f_{M},\theta)\\=&
((d_{B}f_{B}-f_{B}(d_{A}\otimes I_A)-f_{B}(I_{A}\otimes d_A)),(d_{M}f_{M}-f_{M}(d_{A}\otimes I_V)-f_{M}(I_{A}\otimes d_V)),
(d_{B}\theta-\theta d_{V}))
\end{align*}
for all $(f_B,f_{M},\theta)\in \mathcal{C}^{2}(\clA,\clB)$.

In the light of Proposition 3.1 \cite{131}, there is a linear map $\Delta:\mathfrak{g}(\clA,\clB) \longrightarrow \mathrm{gl}(\mathcal{H}^{2}(\clA,\clB))$ given by
\begin{equation*}\Delta((d_{\clA},d_{\clB}))([(f_B,f_M,\theta)])=[\Delta_{(d_{\clA},d_{\clB})}(f_B,f_M,\theta)]
\end{equation*}
for all $(d_{\clA},d_{\clB})\in \mathfrak{g}(\clA,\clB)$ and $[(f_B,f_M,\theta)]\in \mathcal{H}^{2}(\clA,\clB)$.

In the remaining part, we always suppose that
  $\mathcal{E}:0\longrightarrow\clB\stackrel{(\mathfrak i, i)}{\longrightarrow}
\clE\stackrel{(\mathfrak p,p)}{\longrightarrow}\clA\longrightarrow0$
is an abelian extension of $((A, [ \ , \ ]_{A}),(V,\rho),T)=\clA$
 by $((B, [ \ , \ ]_{B}),(M,\nu_M),S)=\clB$ with a section $(\mathfrak s,s)$ of $(\mathfrak p,p)$, where $((\hat{A}, [ \ , \ ]_{\hat{A}}),(\hat{V},\hat{\rho}),\hat{T})=\clE$. 
 Assume that $(\omega,\varpi,\chi)$ is a 2-cocycle corresponding to $\mathcal{E}$ induced by $(\mathfrak s,s)$.
 
 The map $W:\mathfrak{g}(\clA,\clB)\longrightarrow \mathcal{H}^{2}(\clA,\clB)$ 
 defined by 
 \begin{equation}\label{Wm} W((d_{\clA},d_{\clB}))=\Delta(d_{\clA},d_{\clB})([(\omega,\varpi,\chi)]),
 ~~\forall~(d_{\clA},d_{\clB})\in \mathfrak{g}(\clA,\clB)\end{equation} 
 is called the Wells map associated to $\mathcal{E}$. Analogous to the proof Lemma \ref{Le1}, $W$ is independent on
 the choice of sections of $(\mathfrak p,p)$.

 Denote
$$\mathrm{Der}_{\clB}(\clE)=\{d_{\clE}=(d_{\hat{A}},d_{\hat{V}})\in \mathrm{Der} (\clE)\mid d_{\hat{A}}
(B)\subseteq B,d_{\hat{V}}(M)=M\}.$$ 
Define a linear map $\digamma:\mathrm{Der}_{\clB}(\clE)\longrightarrow \mathrm{Der} (\clA)\times \mathrm{End} (\clB)$ by
\begin{equation}\label{Dk} \digamma(d_{\clE})=(\bar{d}_{\clE},d_{\clE}|_{\clB})~~ with~~\bar{d}_{\clE}
=(\bar{d}_{\hat{A}},\bar{d}_{\hat{V}})=(\mathfrak p d_{\hat{A}} \mathfrak s,p d_{\hat{V}} s)
,~~\gamma|_{\clB}=(d_{\hat{A}}|_{B},d_{\hat{V}}|_{M})
\end{equation}
for all $d_{\clE}=(d_{\hat{A}},d_{\hat{V}})\in \mathrm{Der}_{\clB}(\clE)$.
In view of Lemma 5.1 \cite{130}, $\mathfrak p d_{\hat{A}} \mathfrak s$ is a derivation of the Lie algebra $A$ and
$\mathfrak p d_{\hat{A}} \mathfrak s$ does not depend on the choice of $\mathfrak s$. It is easy to check that 
$p d_{\hat{V}} s$ does not depend on the choice of $s$. 
Moreover, due to $(d_{\hat{A}},d_{\hat{V}})\in \mathrm{Der}_{\clB}(\clE)$, $(\mathfrak p ,p)$ being
 a homomorphism of relative Rota-Baxter Lie algebras and Eq.~(\ref{C1}), for all $x\in A,v\in V$, we have
\begin{align*}&pd_{\hat{V}}s(\rho(x)v)\\=&pd_{\hat{V}}(\hat{\rho}(\mathfrak s(x))s(v)-\varpi(x,v))
		\\=&p\hat{\rho}(\mathfrak s(x))d_{\hat{V}}s(v)+p\hat{\rho}(d_{\hat{A}}\mathfrak s(x))s(v)
\\=&\rho(\mathfrak{p}\mathfrak s(x))pd_{\hat{V}}s(v)+\rho(\mathfrak{p}d_{\hat{A}}\mathfrak s(x))ps(v)
\\=&\rho(x)pd_{\hat{V}}s(v)+\rho(\mathfrak{p}d_{\hat{A}}\mathfrak s(x))v,
	\end{align*}
and using Eq.~(\ref{C2}) and the fact $(d_{\hat{A}},d_{\hat{V}})\in \mathrm{Der}_{\clB}(\clE)$, we get
\begin{equation*}Tpd_{\hat{V}}s(v)=\mathfrak{p}\hat{T}d_{\hat{V}}s(v)
		=\mathfrak{p}d_{\hat{A}}\hat{T}s(v)
=\mathfrak{p}d_{\hat{A}}(\chi(v)+\mathfrak{s}T(v))
=\mathfrak{p}d_{\hat{A}}\mathfrak{s}T(v).
	\end{equation*}
Thus, $\bar{d}_{\clE}=(\mathfrak p d_{\hat{A}} \mathfrak s,pd_{\hat{V}} s)\in \mathrm{Der}(\clA)$, 
that is, $\digamma$ is well-defined and $\digamma$ is independent on the choice of $(\mathfrak s,s)$.

A pair $(d_{\clA},d_{\clB})\in \mathrm{Der}(\clA)\times \mathrm{End}(\clB)$ is 
called inducible (extensible) associated to $\mathcal{E}$ if $(d_{\clA},d_{\clB})$
 is an image of $\digamma$.

\begin{thm} \label{We} 
A pair $(d_{\clA},d_{\clB})\in \mathrm{Der}(\clA)\times \mathrm{End}(\clB)$ 
is inducible associated to $\mathcal{E}$ if
and only if $(d_{\clA},d_{\clB})\in\mathfrak{g}(\clA,\clB)$ and $W((d_{\clA},d_{\clB}))=[0]$.
\end{thm}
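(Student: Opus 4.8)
The plan is to analyze, for a derivation $d_{\clE}=(d_{\hat{A}},d_{\hat{V}})\in\mathrm{Der}_{\clB}(\clE)$ inducing $(d_{\clA},d_{\clB})$, how $d_{\hat{A}}$ and $d_{\hat{V}}$ act relative to the section $(\mathfrak{s},s)$. Since $d_{\clE}$ preserves $\clB$ and projects to $(d_{\clA},d_{\clB})$, the elements $d_{\hat{A}}\mathfrak{s}(x)-\mathfrak{s}(d_A x)$ and $d_{\hat{V}}s(v)-s(d_V v)$ lie in $\mathrm{Ker}\,\mathfrak{p}=B$ and $\mathrm{Ker}\,p=M$ respectively; I would set $\tau(x):=d_{\hat{A}}\mathfrak{s}(x)-\mathfrak{s}(d_A x)\in B$ and $\sigma(v):=d_{\hat{V}}s(v)-s(d_V v)\in M$, so that $(\tau,\sigma)\in\mathcal{C}^{1}(\clA,\clB)$. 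The whole proof then rests on a single bookkeeping computation showing that the Leibniz identities for $d_{\clE}$ split into a ``diagonal'' part, living in pure $\clA$- and $\clB$-coefficients, and a ``cocycle'' part involving $\omega,\varpi,\chi$ and $\tau,\sigma$. Throughout one uses that $\clB$ is abelian, so $[\ ,\ ]_B=0$ and $\nu_M=0$, which kills all brackets and actions internal to $\clB$.

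For the forward implication, assume $(d_{\clA},d_{\clB})$ is inducible via such a $d_{\clE}$. First I would apply the derivation property of $d_{\hat{A}}$ (resp. $d_{\hat{V}}$) to the identities $[\mathfrak{s}(x),a]_{\hat{A}}=\rho_B(x)a$ and $\hat{\rho}(\mathfrak{s}(x))m=\rho_M(x)m$ of (\ref{C3}), to $\mu(v)a=-\hat{\rho}(a)s(v)$ of (\ref{C2}), and to $\hat{T}|_M=S$; using $d_{\hat{A}}|_B=d_B$, $d_{\hat{V}}|_M=d_M$ and the vanishing of the internal $\clB$-structure, each identity collapses exactly to one of (\ref{Bd1})--(\ref{Bd4}), whence $(d_{\clA},d_{\clB})\in\mathfrak{g}(\clA,\clB)$. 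Next I would apply $d_{\hat{A}}$ to the defining formula $\omega(x,y)=[\mathfrak{s}(x),\mathfrak{s}(y)]_{\hat{A}}-\mathfrak{s}[x,y]_A$ of (\ref{C1}); the $\mathfrak{s}$-terms cancel because $d_A$ is a Lie-algebra derivation, and what remains is precisely $d_B\omega-\omega(d_A\otimes I)-\omega(I\otimes d_A)=(\delta\tau)_B$ with $(\delta\tau)_B$ as in (\ref{Coy1}). The analogous computations for $\varpi$ (using (\ref{D2}) and the $\hat{\rho}$-compatibility of $d_{\hat{V}}$) and for $\chi$ (using (\ref{D1}) and $d_{\hat{A}}\hat{T}=\hat{T}d_{\hat{V}}$) reproduce the components (\ref{Coy2}) and (\ref{Coy3}). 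Hence $\Delta_{(d_{\clA},d_{\clB})}(\omega,\varpi,\chi)=\mathcal{D_{R}}(\tau,\sigma)\in\mathcal{B}^{2}(\clA,\clB)$, so $W((d_{\clA},d_{\clB}))=[0]$.

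For the converse, suppose $(d_{\clA},d_{\clB})\in\mathfrak{g}(\clA,\clB)$ and $W((d_{\clA},d_{\clB}))=[0]$. Then $\Delta_{(d_{\clA},d_{\clB})}(\omega,\varpi,\chi)$ is a coboundary, so I would fix $(\tau,\sigma)\in\mathcal{C}^{1}(\clA,\clB)$ with $\Delta_{(d_{\clA},d_{\clB})}(\omega,\varpi,\chi)=\mathcal{D_{R}}(\tau,\sigma)$ and define, for all $x\in A,a\in B,v\in V,m\in M$, the maps $d_{\hat{A}}(\mathfrak{s}(x)+a):=\mathfrak{s}(d_A x)+\tau(x)+d_B(a)$ and $d_{\hat{V}}(s(v)+m):=s(d_V v)+\sigma(v)+d_M(m)$. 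By construction these preserve $B$ and $M$ and satisfy $\digamma(d_{\clE})=(d_{\clA},d_{\clB})$, so it only remains to check that $d_{\clE}=(d_{\hat{A}},d_{\hat{V}})$ is a derivation of $\clE$. This is the reverse of the computation above: one verifies the Leibniz rule for $[\ ,\ ]_{\hat{A}}$, the representation-compatibility (\ref{D2}) for $\hat{\rho}$, and the commutation $d_{\hat{A}}\hat{T}=\hat{T}d_{\hat{V}}$ of (\ref{D1}), in each case splitting the terms into those governed by membership in $\mathfrak{g}(\clA,\clB)$ (the diagonal part) and those governed by the three coboundary equations encoded in $\Delta_{(d_{\clA},d_{\clB})}(\omega,\varpi,\chi)=\mathcal{D_{R}}(\tau,\sigma)$ (the cocycle part); the argument parallels the converse part of Theorem \ref{EC}.

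The main obstacle is this last verification in the converse direction: showing that the explicitly defined pair $(d_{\hat{A}},d_{\hat{V}})$ simultaneously respects all three layers of the relative Rota-Baxter structure — the Lie bracket on $\hat{A}$, the representation $\hat{\rho}$ on $\hat{V}$, and the operator $\hat{T}$. Each defining identity must be expanded on general elements $\mathfrak{s}(x)+a$ and $s(v)+m$, and one must recognize that every ``mixed'' term reassembles into exactly one component of $\mathcal{D_{R}}(\tau,\sigma)$ while every ``pure'' term is controlled by exactly one of (\ref{Bd1})--(\ref{Bd4}); keeping this correspondence straight across the bracket, the action and the operator is where the care is needed, though each individual manipulation is routine.
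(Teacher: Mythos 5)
Your proposal is correct and follows essentially the same route as the paper: it introduces the same difference maps $\tau=\zeta$, $\sigma=\eta$ measured against the section, derives (\ref{Bd1})--(\ref{Bd4}) and the identity $\Delta_{(d_{\clA},d_{\clB})}(\omega,\varpi,\chi)=\mathcal{D_{R}}(\zeta,\eta)$ in the forward direction, and in the converse defines $d_{\hat{A}},d_{\hat{V}}$ by exactly the formulas of Eq.~(\ref{Mp7}) and verifies the derivation axioms layer by layer. The only difference is organizational, not mathematical, so nothing needs to change.
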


\begin{proof} Assume that the pair  $(d_{\clA},d_{\clB})\in \mathrm{Der}(\clA)\times \mathrm{End}(\clB)$ 
is inducible associated to $\mathcal{E}$. Then there is a derivation 
$d_{\clE}=(d_{\hat{A}},d_{\hat{V}})\in\mathrm{Der}_{\clB}(\clE)$ such that 
$d_{\hat{A}}|_{B}=d_{B},~~d_{\hat{V}}|_{M}= d_{M},~~d_{A}\mathfrak p=\mathfrak pd_{\hat{A}},~~d_{V}p=pd_{\hat{V}}$.
Thanks to $d_{\clE}=(d_{\hat{A}},d_{\hat{V}})\in\mathrm{Der}_{\clB}(\clE)$, we get
\begin{equation}\label{Mp1}d_{\hat{V}}(\hat{\rho}(\hat{x})\hat{v})
=\hat{\rho}(\hat{x})d_{\hat{V}}(\hat{v})+\hat{\rho}(d_{\hat{A}}(\hat{x}))\hat{v}
,~~\forall~\hat{x}\in \hat{A},\hat{v}\in \hat{V}.\end{equation} 
\begin{equation}\label{Mp2}
\hat{T}d_{\hat{V}}=d_{\hat{A}}\hat{T}.\end{equation}
Define linear maps $\zeta:A\longrightarrow B,\eta:V\longrightarrow M$ respectively by
\begin{equation}\label{Mp3}\zeta(x)=d_{\hat{A}}\mathfrak s(x)-\mathfrak {s}d_{A}(x),~~
\eta(v)=d_{\hat{V}}s(v)-sd_{V}(v),~~x\in A,v\in V.\end{equation} 
Using Eqs.~(\ref{C3}), (\ref{Mp1}) and (\ref{Mp3}), for all $x\in A,m\in M$,
we obtain
\begin{align*}&d_{M}(\rho_{M}(x)m)-\rho_{M}(x)d_{M}(m)
\\=&d_{\hat{V}}(\hat{\rho}(\mathfrak s(x))m)-\hat{\rho}(\mathfrak s(x))d_{\hat{V}}(m)
\\=&\hat{\rho}(d_{\hat{A}}\mathfrak s(x))m
\\=&\hat{\rho}(\zeta(x)+\mathfrak s d_{A}(x))m
\\=&\nu_{M}(\zeta(x))m+\hat{\rho}(\mathfrak s d_{A}(x))m
\\=&\rho_{M}(d_{A}(x))m.
\end{align*}
By the same token, we can show that Eqs.~(\ref{Bd2})-(\ref{Bd4}) hold.
It follows that $(d_{\clA},d_{\clB})\in\mathfrak{g}(\clA,\clB)$.

In the sequel, we show that 
$W((d_{\clA},d_{\clB}))=\Delta_{(d_{\clA},d_{\clB})}([(\omega,\varpi,\chi)])=[0],$
 that is, $\Delta_{(d_{\clA},d_{\clB})}([(\omega,\varpi,\chi)])=\mathcal{D_{R}}(\zeta,\eta)$. Write
 \begin{equation*}\mathcal{D_{R}}(\zeta,\eta)
=((\delta(\zeta,\eta))_{B},(\delta(\zeta,\eta))_{M},h_{T}(\zeta,\eta))
\end{equation*}
and
\begin{align*}&\Delta_{(d_{\clA},d_{\clB})}(\omega,\varpi,\chi)\\=&
((d_{B}\omega-\omega(d_{A}\otimes I_A)-\omega(I_{A}\otimes d_A)),(d_{M}\varpi-\varpi(d_{A}\otimes I_V)-\varpi(I_{A}\otimes d_V)),
(d_{B}\chi-\chi d_{V})).
\end{align*}
By Eqs.~(\ref{D2}), (\ref{C1})-(\ref{C3}), (\ref{Mp1}), (\ref{Mp3}) and (\ref{Coy2}), we have
\begin{align*}&(d_{M}\varpi-\varpi(d_{A}\otimes I_V)-\varpi(I_{A}\otimes d_V))(x,v)\\=&
d_{M}\varpi(x,v)-\varpi(d_{A}(x), v)-\varpi(x, d_V(v))
\\=&d_{\hat{V}}(\hat{\rho}(\mathfrak s(x))s(v))-d_{\hat{V}}s(\rho(x)v)
-\hat{\rho}(\mathfrak sd_{A}(x))s(v)+s(\rho(d_{A}(x))v)
-\hat{\rho}(\mathfrak s(x))sd_{V}(v)+s(\rho(x)d_{V}(v))
\\=&d_{\hat{V}}(\hat{\rho}(\mathfrak s(x))s(v))-d_{\hat{V}}s(\rho(x)v)
-\hat{\rho}(d_{\hat{A}}\mathfrak s(x)-\zeta(x))s(v)+s(\rho(d_{A}(x))v+\rho(x)d_{V}(v))
\\&-\hat{\rho}(\mathfrak s(x))(d_{\hat{V}}s(v)-\eta(v))
\\=&-d_{\hat{V}}s(\rho(x)v)
+\hat{\rho}(\zeta(x))s(v)+sd_{V}(\rho(x)v)+\hat{\rho}(\mathfrak s(x))\eta(v)
\\=&-\eta(\rho(x)v)
+\hat{\rho}(\zeta(x))s(v)+\rho_{M}(x)\eta(v)
\\=&-\eta(\rho(x)v)
-\mu(v)\zeta(x)+\rho_{M}(x)\eta(v)\\=&
(\delta(\zeta,\eta))_{M}(x,v),
\end{align*}
which yields that 
\begin{equation}\label{Wc1}d_{M}\varpi(x,v)-\varpi(d_{A}(x), v)-\varpi(x, d_V(v))=(\delta(\zeta,\eta))_{M}(x,v)\end{equation}
Take the same procedure, we can verify that
\begin{equation}\label{Wc2}d_{B}\omega(x,y)-\omega(d_{A}(x),y)-\omega(x, d_{A}(y))=(\delta(\zeta,\eta))_{B}(x,y),\end{equation}
\begin{equation}\label{Wc3}(d_{B}\chi-\chi d_{V})(v)=h_{T}(\zeta,\eta)(v).\end{equation}
Thus, $\Delta_{(d_{\clA},d_{\clB})}([(\omega,\varpi,\chi)])=\mathcal{D_{R}}(\zeta,\eta)$.

On the other hand, assume that $(d_{\clA},d_{\clB})\in\mathfrak{g}(\clA,\clB)$ and $W((d_{\clA},d_{\clB}))=[0]$.
Then there are linear maps $\zeta:A\longrightarrow B$ and $\eta:V\longrightarrow M$ such that 
$\Delta_{(d_{\clA},d_{\clB})}([(\omega,\varpi,\chi)])=\mathcal{D_{R}}(\zeta,\eta)$, that is,
Eqs.~(\ref{Wc1})-(\ref{Wc3}) hold.
 Define a 
linear map $d_{\clE}=(d_{\hat{A}},d_{\hat{V}}):\clE\longrightarrow \clE$ by
\begin{equation}\label{Mp7}d_{\hat{A}}(\mathfrak s(x)+a)=\mathfrak s d_{A}(x)+\zeta(x)+d_{B}(a),~~
d_{\hat{V}}( s(v)+m)= s d_{V}(v)+\eta(v)+d_{M}(m)\end{equation} 
for all $x\in A,m\in M,a\in B,v\in V$. 
It is clearly that $d_{\clE}|_{\clB}=d_{\clB}$. Thanks to $\mathrm{Ker}p\simeq V$, we get
\begin{equation*}p d_{\hat{V}}(s(v)+m)=p(s d_{V}(v)+\eta(v)+d_{M}(m))
=d_{V}(v)=d_{V}p(s(v)+m).
\end{equation*} 
It follows that $p d_{\hat{V}}=d_{V}p$.
Similarly, $\mathfrak p d_{\hat{A}}=d_{A}\mathfrak p$. 
By Eqs.~(\ref{D2}), (\ref{C1})-(\ref{C3}),(\ref{Bd3}) and (\ref{Mp7}), we obtain
\begin{align*}&\hat{\rho}(\mathfrak s(x)+a))d_{\hat{V}}(s(v)+m)+\hat{\rho}(d_{A}(\mathfrak s(x)+a))(s(v)+m)
\\=&\hat{\rho}(\mathfrak s(x)+a)(s d_{V}(v)+\eta(v)+d_{M}(m))+\hat{\rho}(\mathfrak s d_{A}(x)+\zeta(x)+d_{B}(a))(s(v)+m)
\\=&\hat{\rho}(\mathfrak s(x))(s d_{V}(v))+
\hat{\rho}(\mathfrak s(x))(\eta(v)+d_{M}(m))
+\hat{\rho}(a)(s d_{V}(v))+
\hat{\rho}(a)(\eta(v)+d_{M}(m))\\&+\hat{\rho}(\mathfrak s d_{A}(x))s(v)
+\hat{\rho}(\mathfrak s d_{A}(x))m+\hat{\rho}(\zeta(x)+d_{B}(a))s(v)+\hat{\rho}(\zeta(x)+d_{B}(a))m
\\=&
\varpi(x,d_{V}v)+s(\rho(x)d_{V}(v))+\rho_{M}(x)(\eta(v)+d_{M}(m))
-\mu(d_{V}v)a+\nu_{M}(a)(\eta(v)+d_{M}(m))
\\&+\varpi(d_{A}x,v)+s(\rho(d_{A}x)v)+
\rho_{M}(d_{A}x)m-\mu(v)(\zeta(x)+d_{B}a)+\nu_{M}(\zeta(x)+d_{B}a)m
\\=&
\varpi(x,d_{V}v)+sd_{V}(\rho(x)v)+\rho_{M}(x)\eta(v)+d_{M}(\rho_{M}(x)m)
-d_{M}(\mu(v)a)+\varpi(d_{A}x,v)-\mu(v)\zeta(x)
\end{align*}
and
\begin{align*}&d_{\hat{V}}(\hat{\rho}(\mathfrak s(x)+a))(s(v)+m)
\\=&d_{\hat{V}}\hat{\rho}(\mathfrak s(x))s(v)+d_{\hat{V}}\hat{\rho}(\mathfrak s(x))m
+d_{\hat{V}}\hat{\rho}(a)s(v)+d_{\hat{V}}\hat{\rho}(a)m
\\=&d_{\hat{V}}(\varpi(x,v)+s\rho(x)v)+d_{\hat{V}}\rho_{M}(x)m-d_{\hat{V}}(\mu(v)a)
+d_{\hat{V}}(\nu_{M}(a)m)
\\=&d_{M}\varpi(x,v)+\eta(\rho(x)v)+sd_{V}(\rho(x)v)+d_{M}(\rho_{M}(x)m)-d_{M}(\mu(v)a).
\end{align*}
Using Eqs.~(\ref{Wc1}) and (\ref{Coy2}), we get 
\begin{equation*}\hat{\rho}(\mathfrak s(x)+a))d_{\hat{V}}(s(v)+m)+\hat{\rho}(d_{A}(\mathfrak s(x)+a))(s(v)+m)
=d_{\hat{V}}(\hat{\rho}(\mathfrak s(x)+a))(s(v)+m).\end{equation*}
Analogously, $d_{\hat{A}}$ is a derivation of the Lie algebra $\hat{A}$ and $\hat{T}d_{\hat{V}}=d_{\hat{A}}\hat{T}$.
Thus, $d_{\clE}=(d_{\hat{A}},d_{\hat{V}})$ is a derivation of $\clE$.
In all, $(d_{\clA},d_{\clB})\in \mathrm{Der}(\clA)\times \mathrm{End}(\clB)$ 
is inducible associated to $\mathcal{E}$.
\end{proof}

\begin{pro} \label{Cb} With the above notations, we have
\begin{enumerate}[label=$(\roman*)$,leftmargin=15pt]
	\item $\mathrm{Ker}\digamma\cong \mathcal{Z}^{1}(\clA,\clB)$ as vector spaces. 
\item The map $\digamma:\mathrm{Der}_{\clB}(\clE)\longrightarrow \mathfrak{g}(\clA,\clB)$ is a homomorphism 
of Lie algebras. 
\item $\mathrm{Im}\digamma=\mathrm{Ker}W$.
\end{enumerate}
\end{pro}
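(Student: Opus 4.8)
The plan is to dispatch the three items separately, with essentially all of the genuine work concentrated in the explicit isomorphism of part $(i)$; parts $(ii)$ and $(iii)$ are then largely formal. I would begin with part $(iii)$, since it is a repackaging of Theorem~\ref{We}. By construction every element of $\mathrm{Im}\digamma$ is inducible, and the forward direction of Theorem~\ref{We} shows that any inducible pair lies in $\mathfrak{g}(\clA,\clB)$; this already supplies the codomain refinement asserted in $(ii)$, namely $\mathrm{Im}\digamma\subseteq\mathfrak{g}(\clA,\clB)$. Theorem~\ref{We} then reads: $(d_{\clA},d_{\clB})\in\mathrm{Im}\digamma$ if and only if $(d_{\clA},d_{\clB})\in\mathfrak{g}(\clA,\clB)$ and $W((d_{\clA},d_{\clB}))=[0]$. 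Since $W$ is defined on $\mathfrak{g}(\clA,\clB)$, the right-hand condition is exactly membership in $\mathrm{Ker}W$, whence $\mathrm{Im}\digamma=\mathrm{Ker}W$.

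For part $(ii)$ I would verify that $\digamma$ intertwines the commutator brackets. Because $d_{\hat{A}}$ preserves $B=\ker\mathfrak{p}$ it descends to the quotient $\hat{A}/B\cong A$, the descended map being precisely $\bar{d}_{\hat{A}}=\mathfrak{p}d_{\hat{A}}\mathfrak{s}$; equivalently $\mathfrak{p}d_{\hat{A}}=\bar{d}_{\hat{A}}\mathfrak{p}$, and likewise $pd_{\hat{V}}=\bar{d}_{\hat{V}}p$. Hence
\begin{equation*}
\mathfrak{p}[d_{\hat{A}},d'_{\hat{A}}]=\bar{d}_{\hat{A}}\bar{d}'_{\hat{A}}\mathfrak{p}-\bar{d}'_{\hat{A}}\bar{d}_{\hat{A}}\mathfrak{p}=[\bar{d}_{\hat{A}},\bar{d}'_{\hat{A}}]\mathfrak{p},
\end{equation*}
and composing with $\mathfrak{s}$ on the right (using $\mathfrak{p}\mathfrak{s}=I_{A}$) gives $\overline{[d_{\hat{A}},d'_{\hat{A}}]}=[\bar{d}_{\hat{A}},\bar{d}'_{\hat{A}}]$, with the same argument on the $\hat{V}$-component. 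The restriction component is immediate, since $d_{\hat{A}},d'_{\hat{A}}$ preserve $B$ and $d_{\hat{V}},d'_{\hat{V}}$ preserve $M$, so $[d_{\hat{A}},d'_{\hat{A}}]|_{B}=[d_{\hat{A}}|_{B},d'_{\hat{A}}|_{B}]$ and similarly on $M$. Combining both components yields $\digamma([d_{\clE},d'_{\clE}])=[\digamma(d_{\clE}),\digamma(d'_{\clE})]$.

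The main effort is part $(i)$, which I would prove by constructing mutually inverse linear maps, following the template of Proposition~\ref{Wm4}$(ii)$. Given $d_{\clE}=(d_{\hat{A}},d_{\hat{V}})\in\mathrm{Ker}\digamma$, the conditions $\mathfrak{p}d_{\hat{A}}\mathfrak{s}=0$, $pd_{\hat{V}}s=0$, $d_{\hat{A}}|_{B}=0$, $d_{\hat{V}}|_{M}=0$ show that $\zeta(x):=d_{\hat{A}}\mathfrak{s}(x)$ lands in $\ker\mathfrak{p}=B$ and $\eta(v):=d_{\hat{V}}s(v)$ lands in $\ker p=M$ (these are the formulas of~\eqref{Mp3} specialised to $d_{A}=0,d_{V}=0$). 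I would then check that $(\zeta,\eta)\in\mathcal{Z}^{1}(\clA,\clB)$ as defined in~\eqref{Cy1}: its three defining identities follow respectively from $d_{\hat{A}}$ being a derivation of $\hat{A}$ applied to $[\mathfrak{s}(x),\mathfrak{s}(y)]_{\hat{A}}$, from $d_{\hat{V}}$ respecting $\hat{\rho}$ applied to $\hat{\rho}(\mathfrak{s}(x))s(v)$, and from $\hat{T}d_{\hat{V}}=d_{\hat{A}}\hat{T}$ applied to $s(v)$, after rewriting the bracket, the action and $\hat{T}$ via~\eqref{C1}--\eqref{C3} and using that $\clB$ is abelian (so the $B$-bracket and the $\nu_{M}$-terms vanish). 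Conversely, given $(\zeta,\eta)\in\mathcal{Z}^{1}(\clA,\clB)$, I would define $d_{\hat{A}}(\mathfrak{s}(x)+a)=\zeta(x)$ and $d_{\hat{V}}(s(v)+m)=\eta(v)$, that is, formula~\eqref{Mp7} with $d_{A}=d_{V}=0$ and $d_{B}=d_{M}=0$, and verify that $d_{\clE}=(d_{\hat{A}},d_{\hat{V}})\in\mathrm{Der}_{\clB}(\clE)$ with $\digamma(d_{\clE})=0$; this is exactly the converse computation in the proof of Theorem~\ref{We} with the vanishing inputs. These two assignments are visibly linear and mutually inverse, giving $\mathrm{Ker}\digamma\cong\mathcal{Z}^{1}(\clA,\clB)$ as vector spaces.

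The anticipated obstacle is the bookkeeping in part $(i)$: matching each derivation, representation, and Rota-Baxter compatibility of $d_{\clE}$ against the correct one of the three cocycle conditions while tracking the abelian-extension simplifications. However, since each of these computations is the $d_{A}=d_{V}=0$, $d_{B}=d_{M}=0$ degeneration of identities already carried out in the proof of Theorem~\ref{We} and Proposition~\ref{Wm4}, no genuinely new calculation is needed, and I would present them by reference rather than in full.
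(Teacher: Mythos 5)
Your proposal is correct and follows essentially the same route as the paper: the explicit map $d_{\clE}\mapsto(d_{\hat A}\mathfrak s,d_{\hat V}s)$ with inverse $(\zeta,\eta)\mapsto(\mathfrak s(x)+a\mapsto\zeta(x),\,s(v)+m\mapsto\eta(v))$ for $(i)$, verification of bracket compatibility for $(ii)$, and reduction to Theorem~\ref{We} for $(iii)$. Your treatment of $(ii)$ via the intertwining identity $\mathfrak p d_{\hat A}=\bar d_{\hat A}\mathfrak p$ and of $(iii)$ as a direct restatement of Theorem~\ref{We} is a mild streamlining of the paper's coordinate computations, but not a different argument.
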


\begin{proof}
(i) For all $d_{\clE}\in\mathrm{Ker}\digamma$, we get that 
$\mathfrak p d_{\hat{A}} \mathfrak s=0,~pd_{\hat{V}} s=0$
and $d_{\hat{A}}|_{B}=0=d_{\hat{V}}|_{M}$.
It follows that $d_{\hat{A}} \mathfrak s(x)\in B,~d_{\hat{V}} s(v)\in M$ for all $x\in A,v\in V $. Moreover,
by Eqs.~(\ref{eq3.3}), (\ref{C1})-(\ref{C3}), 
for all $x\in A,v\in V$, we have
\begin{align*}&(\delta\Gamma(d_{\clE}))_{M}(x,v)\\=&-\mu(v)d_{\hat{A}}\mathfrak {s}(x)
+\rho_{M}(x)d_{\hat{V}} s(v)-d_{\hat{V}}s(\rho(x)v)
\\=&\hat{\rho}(d_{\hat{A}}\mathfrak {s}(x))s(v)+\hat{\rho}(\mathfrak {s}(x))d_{\hat{V}}s(v)-
d_{\hat{V}}(\hat{\rho}(\mathfrak {s}(x))s(v)-\varpi(x,v))
\\=&0.
\end{align*}
According to Lemma 5.2 \cite{130}, $(\delta\Gamma(d_{\clE}))_{B}(x,y)=0,~~\forall~x,y\in A$.
Therefore, $\delta\Gamma(d_{\clE})\in \mathcal{Z}^{1}(\clA,\clB)$.
Hence, we can 
define a map 
\begin{equation*}\Gamma:\mathrm{Ker}\digamma\longrightarrow \mathcal{Z}^{1}(\clA,\clB),
~~\Gamma(d_{\clE})=(d_{\hat{A}}\mathfrak s,d_{\hat{V}} s),~\forall~d_{\clE}\in \mathrm{Ker}\digamma.\end{equation*}
Since $d_{\clE}|_{\clB}=0$, $\Gamma$ does not depend on the choice of $(\mathfrak s,s)$.
In the sequel, we prove that $\Gamma$ is bijective. In fact,
if $\Gamma(d_{\clE})=0$, for all $\hat{x}=\mathfrak s(x)+a\in \hat{A},
~\hat{v}=s(v)+m\in \hat{V}$ with $x\in A,a\in B,v\in V,m\in M$, we have
$d_{\hat{A}}(\mathfrak s(x)+a)=d_{\hat{A}}\mathfrak s(x)+d_{B}(a)=0$ and
$d_{\hat{V}}( s(v)+m)=d_{\hat{V}}s(v)+d_{M}(m)=0$, which indicates that $d_{\clE}=0$. 
It follows that $\Gamma$ is injective.
For any $f=(f_B,f_M)\in\mathcal{Z}^{1}(\clA,\clB)$, define a map 
$d_{\clE}=(d_{\hat{A}},d_{\hat{V}}):\clE\longrightarrow \clE$ by
\begin{equation}\label{Dr1}d_{\hat{A}}(\mathfrak s(x)+a)=f_{B}(x),~~d_{\hat{V}}( s(v)+m)=f_{M}(v),~~\forall~\mathfrak s(x)+a\in \hat{A},~s(v)+m\in \hat{V}.\end{equation}
By Lemma 5.2 \cite{130}, we know that $d_{\hat{A}}$ is a derivation of the Lie algebra $\hat{A}$.
Using Eqs.~(\ref{C1})-(\ref{C3}) and (\ref{Dr1}), we have,
\begin{align*}&d_{\hat{V}}(\hat{\rho}(\mathfrak s(x)+a)( s(v)+m))\\=&
d_{\hat{V}}(\hat{\rho}(\mathfrak s(x)) s(v)+\hat{\rho}(\mathfrak s(x))m+\hat{\rho}(a) s(v)
\hat{\rho}(a)m)
\\=&
d_{\hat{V}}(\hat{\rho}(\varpi(x,v)+s(\rho(x)v)+\rho_{M}(x)m-\mu(v)a+\nu_{M}(a)m))
\\=&f_{M}(\rho(x)v)
\end{align*}
and
\begin{align*}&\hat{\rho}(\mathfrak s(x)+a)d_{\hat{V}}( s(v)+m)+\hat{\rho}(d_{\hat{A}}(\mathfrak s(x)+a))( s(v)+m)
\\=&\hat{\rho}(\mathfrak s(x)+a)f_{M}(v)+\hat{\rho}(f_{B}(x))( s(v)+m)
\\=&\rho_{M}(x)f_{M}(v)-\mu(v)f_{B}(x)+\nu_{M}(a)f_{M}(v)+\nu_{M}(f_{B}(x))m
\\=&\rho_{M}(x)f_{M}(v)-\mu(v)f_{B}(x).
\end{align*}
Combining (\ref{Cy1}), we obtain that
\begin{equation*}d_{\hat{V}}(\hat{\rho}(\mathfrak s(x)+a)( s(v)+m))
=\hat{\rho}(\mathfrak s(x)+a)d_{\hat{V}}( s(v)+m)+\hat{\rho}(d_{\hat{A}}(\mathfrak s(x)+a))( s(v)+m).\end{equation*}
Analogously, $\hat{T}d_{\hat{V}}=d_{\hat{A}}\hat{T}$.
Thus, $d_{\clE}=(d_{\hat{A}},d_{\hat{V}})\in \mathrm{Der}(\clE)$, that is, $\Gamma$ is surjective.
In all, $\Gamma$ is bijective.

(ii) It is clear that any $(d_{\clA},d_{\clB})\in \mathrm{Im}\digamma$ is inducible.
 By Theorem \ref{We}, $\mathrm{Im}\digamma\subseteq \mathfrak{g}(\clA,\clB)$ and $\mathrm{Im}\digamma\subseteq \mathrm{Ker}W$. 
 For all $d_{\clE}=(d_{\hat{A}},d_{\hat{V}}),d_{\clE}^{'}=(d_{\hat{A}}^{'},d_{\hat{V}}^{'})\in \mathrm{Der}_{\clB}(\clE)$, 
 since $d_{\hat{V}} s(v),d_{\hat{V}}^{'} s(v)\in \hat{V}$, 
 there are elements $v_1,v_2\in V,m_1,m_2\in M$ such taht
\begin{equation}\label{Der2}d_{\hat{V}} s(v)=s(v_1)+m_1,~~d_{\hat{V}}^{'} s(v)=s(v_2)+m_2.\end{equation}
Due to $\mathrm{Ker}p\simeq M$, we get 
\begin{equation}\label{Der3} \bar{d}_{\hat{V}}(v)=pd_{\hat{V}} s(v)=p( s(v_1)+m_1)=v_1,~~
\bar{d}_{\hat{V}}^{'}(v)=pd_{\hat{V}}^{'} s(v)=p( s(v_2)+m_2)=v_2.\end{equation}
By Eqs.~(\ref{Ik}) and (\ref{Der2})-(\ref{Der3}), we have
\begin{align*}&\digamma([d_{\hat{V}},d_{\hat{V}}^{'}])(v)=\overline{[d_{\hat{V}},d_{\hat{V}}^{'}]}(v)
\\=&p(d_{\hat{V}}d_{\hat{V}}^{'}-d_{\hat{V}}^{'}d_{\hat{V}})s(v)
\\=&pd_{\hat{V}}(s(v_2)+m_2)-pd_{\hat{V}}^{'}(s(v_1)+m_1)
\\=&pd_{\hat{V}}s(v_2)-pd_{\hat{V}}^{'}s(v_1)
\\=&pd_{\hat{V}}s pd_{\hat{V}}^{'}s(v)-pd_{\hat{V}}^{'}spd_{\hat{V}}s(v)
\\=&[pd_{\hat{V}}s,pd_{\hat{V}}^{'}s](v)
\\=&[\bar{d}_{\hat{V}},\bar{d}_{\hat{V}}^{'}](v).
\end{align*}
Thanks to Lemma 5.4 \cite{130}, $\digamma([d_{\hat{A}},d_{\hat{A}}^{'}])(x)=\overline{[d_{\hat{A}},d_{\hat{A}}^{'}]}(x)
=[\bar{d}_{\hat{A}},\bar{d}_{\hat{A}}^{'}](x)$.
Thus, $\digamma([d_{\clE},d_{\clE}^{'}])=[\digamma(d_{\clE}),\digamma(d_{\clE}^{'})]$, that is,
$\digamma$ is a homomorphism of Lie algebras.

(iii) For all $(d_{\clA},d_{\clB})\in \mathrm{Ker}W$, we obtain that
$W(d_{\clA},d_{\clB})=\Delta((d_{\clA},d_{\clB}))([(\omega,\varpi,\chi)])=[0].$
Thus, there are linear maps $\zeta:A\rightarrow B$ and $\eta:V\rightarrow M$ satisfying
$\Delta((d_{\clA},d_{\clB}))([(\omega,\varpi,\chi)]])=\mathcal{D_{R}}(\zeta,\eta)$, that is,
Eqs.~(\ref{Wc1})-(\ref{Wc3}) hold.
 Define a linear map
$d_{\clE}=(d_{\hat{A}},d_{\hat{V}}):\clE\longrightarrow \clE$ by 
\begin{equation*}d_{\hat{A}}(\mathfrak s(x)+a)=\mathfrak sd_{A}(x)+\zeta(x)+d_{B}(a),
~~d_{\hat{V}}(s(v)+m)=sd_{V}(v)+\eta(v)+d_{M}(m).\end{equation*}
Taking the same procedure of the proof of Theorem \ref{We}, we have 
$d_{\clE}=(d_{\hat{A}},d_{\hat{V}})\in\mathrm{Der}_{\clB}(\clE)$
and $\digamma(d_{\clE})=(d_{\clA},d_{\clB})$. Thus, $\mathrm{Ker}W\subseteq \mathrm{Im}\digamma$.
In conclusion, $\mathrm{Im}\digamma=\mathrm{Ker}W$.
\end{proof}

\begin{thm} Assume that $((A, [ \ , \ ]_{A}),(V,\rho),T)=\clA$ and 
 $((B, [ \ , \ ]_{B}),(M,\nu_M),S)=\clB$ are two relative Rota-Baxter Lie algebras.
Let $\mathcal{E}:0\longrightarrow\clB\stackrel{(\mathfrak i, i)}{\longrightarrow}
\clE\stackrel{(\mathfrak p,p)}{\longrightarrow}\clA\longrightarrow0$
be an abelian extension of $\clA$ by $\clB$. Then there is an exact sequence given by
 $$0\longrightarrow  \mathcal{Z}^{1}(\clA,\clB)\stackrel{i}{\longrightarrow} \mathrm{Der}_{\clB}(\clE)\stackrel{\Gamma}{\longrightarrow}\mathfrak{g}(\clA,\clB)\stackrel{W}{\longrightarrow} \mathcal{H}^{2}(\clA,\clB).$$
\end{thm}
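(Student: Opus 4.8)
The plan is to obtain the exact sequence by assembling the three parts of Proposition~\ref{Cb}, so that essentially all of the work is already done and only the bookkeeping of exactness at each node remains. First I would fix the meaning of the maps: the middle arrow of the sequence is the Lie algebra homomorphism $\digamma\colon\mathrm{Der}_{\clB}(\clE)\to\mathfrak{g}(\clA,\clB)$ of Proposition~\ref{Cb}(ii), while the arrow $i\colon\mathcal{Z}^{1}(\clA,\clB)\to\mathrm{Der}_{\clB}(\clE)$ is the inverse of the isomorphism $\Gamma\colon\mathrm{Ker}\digamma\to\mathcal{Z}^{1}(\clA,\clB)$ produced in Proposition~\ref{Cb}(i), composed with the inclusion $\mathrm{Ker}\digamma\hookrightarrow\mathrm{Der}_{\clB}(\clE)$. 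Concretely, $i$ sends a $1$-cocycle $f=(f_{B},f_{M})\in\mathcal{Z}^{1}(\clA,\clB)$ to the derivation $d_{\clE}=(d_{\hat{A}},d_{\hat{V}})$ defined by Eq.~(\ref{Dr1}).

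With the maps pinned down, I would verify exactness at each of the three interior terms. Exactness at $\mathcal{Z}^{1}(\clA,\clB)$ is just the injectivity of $i$, which is immediate since $i$ is $\Gamma^{-1}$ followed by an inclusion and $\Gamma$ is a bijection by Proposition~\ref{Cb}(i). Exactness at $\mathrm{Der}_{\clB}(\clE)$ amounts to the identity $\mathrm{Im}\,i=\mathrm{Ker}\digamma$; but Proposition~\ref{Cb}(i) shows precisely that $\Gamma$ is an isomorphism between $\mathrm{Ker}\digamma$ and $\mathcal{Z}^{1}(\clA,\clB)$, so $\mathrm{Im}\,i=\mathrm{Ker}\digamma$ by construction. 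Finally, exactness at $\mathfrak{g}(\clA,\clB)$ is exactly the statement $\mathrm{Im}\,\digamma=\mathrm{Ker}\,W$ established in Proposition~\ref{Cb}(iii), which itself rests on the inducibility criterion of Theorem~\ref{We}.

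The only point that needs a little care, rather than being a genuine obstacle, is confirming that the arrows are morphisms of the appropriate kind so that the displayed sequence is a sequence of the claimed type: $i$ is a linear embedding of the abelian group $\mathcal{Z}^{1}(\clA,\clB)$, the map $\digamma$ is a homomorphism of Lie algebras by Proposition~\ref{Cb}(ii), and $W$ is well defined into $\mathcal{H}^{2}(\clA,\clB)$ and independent of the chosen section, as observed just after Eq.~(\ref{Wm}). Since the three exactness statements are literally the contents of Proposition~\ref{Cb}, the proof reduces to citing those parts in order and no new computation is required; the hardest computational content, namely the identification $\mathrm{Ker}\,W=\mathrm{Im}\,\digamma$, has already been discharged in Theorem~\ref{We} together with Proposition~\ref{Cb}(iii).
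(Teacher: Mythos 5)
Your proof is correct and follows essentially the same route as the paper, whose entire proof is the single line ``The statement follows by Proposition \ref{Cb}''; you simply spell out the bookkeeping of exactness at each node that the paper leaves implicit. You also correctly resolve the paper's notational slip (the theorem labels the middle arrow $\Gamma$ while Proposition \ref{Cb} calls it $\digamma$), identifying $i$ as $\Gamma^{-1}$ followed by the inclusion of $\mathrm{Ker}\,\digamma$.
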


\begin{proof}
The statement follows by Proposition \ref{Cb}. 
\end{proof}


\begin{center}{\textbf{Acknowledgments}}
\end{center}
This work was supported by the Natural Science
Foundation of Zhejiang Province of China (LY19A010001); and Science
and Technology Planning Project of Zhejiang Province
(2022C01118).

\begin{center} {\textbf{Statements and Declarations}}
\end{center}
 All datasets underlying the conclusions of the paper are available
to readers. No conflict of interest exits in the submission of this
manuscript.


\end{document}